\documentclass{amsart}
\usepackage{amssymb,amsmath,amsthm}
\usepackage[shortlabels]{enumitem}
\usepackage{comment}

\newtheorem{theorem}{Theorem}[section]

\newtheorem{lemma}[theorem]{Lemma}
\newtheorem{proposition}[theorem]{Proposition}
\newtheorem{definition}[theorem]{Definition}
\newtheorem{corollary}[theorem]{Corollary}

\theoremstyle{definition}
\newtheorem{remark}[theorem]{Remark}

\newcommand{\std}{\mathrm{std}}

\newcommand{\N}{\mathbb{N}}
\newcommand{\Z}{\mathbb{Z}}
\newcommand{\R}{\mathbb{R}}
\newcommand{\T}{\mathbb{T}}
\newcommand{\C}{\mathbb{C}}

\newcommand{\cU}{\mathcal{U}}

\newcommand{\cT}{\mathcal{T}}
\newcommand{\cS}{\mathcal{S}}
\newcommand{\cF}{\mathcal{F}}

\newcommand{\id}{\mathrm{id}}

\title[Selfless $C^*$-correspondences]{Selfless $C^*$-correspondences, operator valued $C^*$-probability spaces and completely positive maps}

\author[D. Gao]{David Gao}

\author[M. Junge]{Marius Junge}

\author[S. Kunnawalkam Elayavalli]{Srivatsav Kunnawalkam Elayavalli}
\address{\parbox{\linewidth}{Department of Mathematics, University of Maryland, College Park, \\
		4176 Campus Dr, College Park, MD 20742}}
\email{sriva@umd.edu}
\urladdr{https://sites.google.com/view/srivatsavke}

\author[G. Patchell]{Gregory Patchell}
\address{\parbox{\linewidth}{Mathematical Institute, University of Oxford, Andrew Wiles Building, \\ Radcliffe Observatory Quarter, Woodstock Road, Oxford, OX2 6GG, UK}}
\email{greg.patchell@maths.ox.ac.uk}
\urladdr{https://sites.google.com/view/gpatchel}

\author[L. Robert]{Leonel Robert}
\address{\parbox{\linewidth}{Department of Mathematics, University of Louisiana at Lafayette, \\
		217 Maxim Doucet Hall, 1401 Johnston Street, Lafayette, LA 70503, USA}}
\email{lrobert@louisiana.edu}

\begin{document}

\begin{abstract}
    We develop a general theory of selflessness for $C^*$-correspondences, with several applications. Specializing this theory to completely positive maps, in particular to conditional expectations, gives rise to a novel notion of relative selflessness. Among applications, the machinery developed here yields new examples of selfless $C^*$-algebras, for instance among minimal tensor products and reduced crossed products, new examples of MF $C^*$-algebras arising as reduced amalgamated free products, a conceptually new proof of Kirchberg's $\mathcal{O}_{\infty}$-absorption theorem, and the lack of ``phantom'' traces on ultrapowers.
\end{abstract}
\maketitle

\section{Introduction}
The notion of selfless $C^*$-probability space was introduced by the fifth author in \cite{robertselfless}. Recall that a C*-probability space $(A,\rho)$, where $A$ is a unital C*-algebra and $\rho$ is a  state on $A$ inducing a faithful GNS representation, is called selfless if the embedding 
$$
(A,\rho)\to (A,\rho)*(C,\kappa)
$$
is existential for  some $C^*$-probability space $(C,\kappa)$, with $C\neq \C$ and $\kappa$ a state inducing a faithful GNS representation of $C$. As demonstrated in \cite{robertselfless}, selfless $C^*$-algebras possess many desirable properties, including simplicity, having at most one trace,  and in the tracial case having stable rank 1 and the strict comparison property.

The work \cite{amrutam2025strictcomparisonreducedgroup} proved selflessness for a wide class of reduced group $C^*$-algebras, using an approach involving the rapid decay property.  Selflessness has since been established for broad classes of reduced group $C^*$-algebras  \cite{elayavalli2025remarks, ozawa2025proximalityselflessnessgroupcalgebras, yang2025extreme, BradfordSisto2026}, twisted group $C^*$-algebras \cite{raum2025strictcomparisontwistedgroup, flores2026purenessstablerankreduced},
reduced free products \cite{HKER, hayes2025selfless, fmmm2025selflessfreeprod}, graph products \cite{fmmm2025selflessfreeprod}, amalgamated free products and HNN extensions \cite{gao2026selflessreducedamalgamatedfree}, linear groups \cite{vigdorovich2026selflessreducedcalgebraslinear}, and compact quantum groups \cite{hayes2025selfless}.
Notably, in \cite{ozawa2025proximalityselflessnessgroupcalgebras} Ozawa  settled several natural questions concerning selflessness, developed a  streamlined approach to proving this property, defined a notion of complete selflessness, and introduced  the PHP property for discrete groups.

This article presents a new framework that expands the reach of selflessness to a very broad generality, with various applications. We begin by discussing the case of arbitrary completely positive maps. Let $A$ be a unital C*-algebra.
Denote by $CP(A)$ the set of cp maps from $A$ to $A$. To formulate selflessness of $\phi\in CP(A)$, we rely on Shlyakhtenko's construction of the C*-algebra  $C^*(A,s_{\phi})$ generated by $A$
and a semicircular element $s_{\phi}$ with covariance $\phi$. We call $\phi\in CP(A)$  selfless if the canonical embedding
\[
(A;\phi)\to (C^*(A,s_{\phi});\phi\circ E_\Omega)
\]
is positively existential. This means that for some ultrapower $A^{\cU}$ there is a *-homomorphism $\sigma\colon C^*(A,s_{\phi})\to A^{\cU}$ such that $\sigma|_A$ induces the diagonal embedding and $\sigma$ intertwines the cp maps $\phi\circ E_\Omega$ and $\phi^{\cU}$.

The case when the cp map is a conditional expectation is of special interest, and further illuminates selflessness in this new setting. Let $E\colon A\to B$ be a conditional expectation 
(onto a unital C*-subalgebra $B$) inducing a faithful GNS representation of $A$. Then $E$ is selfless if and only if there exists
 a C*-probability space $(C,\kappa)$, with $C\neq \C$ and $\kappa$ inducing a faithful GNS representation of $C$,  for which the embedding
$$
(A;E)\to (A;E)*_B (B\otimes C; \mathrm{id}_B\otimes \kappa)
$$
is existential (where the right-hand side is an amalgamated reduced free product, and the tensor product is the minimal one). Thus, there exists a *-homomorphism $\sigma\colon A*_B (B\otimes C)\to A^\cU$  whose restriction to $A$ agrees with the diagonal inclusion and that intertwines the expectations $E*_B(\mathrm{id}_B\otimes \kappa)$ and $E^\cU$. Note that, besides the free independence relation that this enforces
in $A^\cU$---$\sigma(B\otimes C)$ is free from $A$ with amalgamation over $B$---selflessness of $E$ enforces a commutation relation:
$\sigma(C)\subseteq B'\cap A^\cU$.  By choosing  $E$ to be a state (i.e., $B=\C$), we recover the notion of selfless C*-probability space.

To investigate selfless cp maps and expectations, we go one step further in generality and define selflessness for a  C*-correspondence $H$ (over $A$) endowed with a real structure $K\subseteq H$. To formulate selflessness for the pair $(H,K)$, we again appeal to Shlyakhtenko's  C*-algebra generated by $A$ and the $A$-valued semicircular system associated to the pair $(H,K)$ \cite{Shlyakhtenko1999Avalued}. Selfless cp maps are then precisely  those for which the C*-correspondence $A\otimes_{\phi} A$, obtained via the KSGNS construction, endowed with the real structure generated by $1\otimes 1$, is selfless. In this introduction, we will confine our discussion to the settings of cp maps and conditional expectations, as this allows for a less technical presentation.

We also work with a notion of  weak selflessness, which,
 as its name indicates, is implied by selflessness. A cp map $\phi\in CP(A)$ is called weakly
selfless if the embedding
$$
A\to C^*(A,s_{\phi})*_A C^*(A,s_{\phi})*_A \cdots
$$
is a positively existential embedding of C*-algebras,
where the right-hand side is the infinite amalgamated reduced free product of $(C^*(A,s_\phi), E_\Omega)$ with itself. 
Note that we do not enforce an intertwining of the  associated cp maps. 
Weak selflessness behaves more flexibly than selflessness  (e.g., the identity map $\mathrm{id}_A\colon A\to A$
can never be selfless but may be weakly selfless), but is strong enough  to have structural implications for the underlying C*-algebra. 

In the state case, if $(A,\rho)$ is selfless then it has the Dixmier property. This in turn implies that $A$ is simple, and that it is either traceless or has at most one trace (if $\rho$ is tracial). In the setting of cp maps$ $, we have the following:
\begin{proposition}
	Let $A$ be a unital C*-algebra.
	\begin{enumerate}[(i)]
\item 
If $\phi\in CP(A)$ is  weakly selfless, then it is  approximately selfadjoint inner, that is to say, it is a point-norm limit of cp maps of the form
$$
x\mapsto \sum_{i=1}^n h_ixh_i,
$$
where $h_i\in A$ is selfadjoint.
\item 
If $E\colon A\to B$ is a weakly selfless conditional inducing a faithful representation of $A$, then $\Delta_A\circ E\colon A\to A^{\cU}$ 
is a point norm limit of maps of the form
$$
x\mapsto \frac1n \sum_{i=1}^n u_i^*xu_i,
$$
where $u_i\in B'\cap A^{\cU}$ is a unitary for all $i$. (Here $\cU$ is an ultrafilter on $\N$ and $\Delta_A$  the diagonal inclusion of $A$ in its ultrapower $A^{\cU}$.)
\end{enumerate}
\end{proposition}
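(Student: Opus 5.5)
The plan is to exploit the free semicircular copies that weak selflessness places inside $A^{\cU}$. These copies let us write $\phi$ as an average of maps $x\mapsto h x h$, and a law of large numbers (freeness) then kills the cross terms.

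\emph{Part (i).} First, in the infinite amalgamated free product $D=C^*(A,s_\phi)*_A C^*(A,s_\phi)*_A\cdots$ we have freely independent $A$-valued semicircular elements $s_1,s_2,\dots$, each with covariance $\phi$. That is, $E_A(s_i x s_j)=\delta_{ij}\phi(x)$ for $x\in A$, where $E_A$ is the canonical conditional expectation onto $A$. Consider
$$
t_n=\frac{1}{\sqrt n}\sum_{i=1}^n s_i .
$$
By the $A$-valued free central limit theorem (or a direct computation), we get
$$
t_n x t_n=\frac1n\sum_{i} s_i x s_i+\frac1n\sum_{i\neq j} s_i x s_j .
$$
I claim that $\frac1n\sum_i s_ixs_i\to\phi(x)$ in norm. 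This is a free law of large numbers: the centered elements $y_i=s_ixs_i-\phi(x)$ are free with amalgamation over $A$ and have $E_A$ zero. Shlyakhtenko's Fock space estimates (a Haagerup-type inequality for degree-two words) give $\|\sum_i y_i\|\le C\sqrt n$, and dividing by $n$ sends this to zero. The same Haagerup-type norm estimate for words of length two with distinct letters gives $\|\sum_{i\ne j}s_ixs_j\|=O(n)$. That is not small, so instead I would use two disjoint families: $x\mapsto \frac1n\sum_{i\le n} s_ixs_i$ is a sum of $n$ maps of the form $h x h$ with $h=s_i/\sqrt n$ selfadjoint, and this sum converges in norm to $\phi(x)$. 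Thus in $D$, $\phi(x)=\lim_n\sum_{i=1}^n h_ixh_i$ with $h_i$ selfadjoint.

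Next, apply positive existentiality. There is a $*$-homomorphism $\sigma\colon D\to A^{\cU}$ restricting to the diagonal embedding on $A$. Then $\sigma(h_i)$ is selfadjoint in $A^{\cU}$, and $\Delta_A\phi(x)$ is a norm limit of $\sum_i\sigma(h_i)\Delta_A(x)\sigma(h_i)$. Fix finitely many $x$'s and $\varepsilon$, lift the $\sigma(h_i)$ to selfadjoint representative sequences, and pick a good coordinate of the ultrafilter. This gives the required approximation by maps $x\mapsto\sum h_ixh_i$ with $h_i\in A$.

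\emph{Part (ii).} Here $\phi=E$ (as a map into $A$), and the semicircular $s_E$ commutes with $B$: since $E$ is $B$-bimodular, $b\otimes 1-1\otimes b$ has zero norm in $A\otimes_E A$. So $s_i\in B'\cap D$. From the free family $s_1,s_2,\dots$ I would build unitaries via functional calculus, e.g. $u_i=\exp(i\lambda s_i)$ or the Cayley transform; these still commute with $B$. The aim is then to show
$$
\frac1n\sum_{i}u_i^*xu_i\to E(x).
$$
Each $u_i^*xu_i$ has the form $E(x)+{}$(terms with $E_A$-expectation controlled). A single free unitary does not give exactly $E(x)$, so I would use the standard Dixmier trick from the state case. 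First, the map $x\mapsto E_A(u^*xu)$ restricted to $A$ is a combination involving $E$ and $\mathrm{id}$. Second, choosing $u$ to be Haar-like in distribution (e.g. realized via a function of $s_i$ whose $A$-valued moments make $E_A(u^*xu)=E(x)$ for $x\in A$, using that the $A$-valued distribution of $s_E$ is $B$-valued semicircular) kills the identity component. Third, the law of large numbers from part (i) makes $\frac1n\sum_i u_i^*xu_i$ norm-close to its expectation. Transporting through $\sigma$ lands the unitaries in $B'\cap A^{\cU}$. Faithfulness of the GNS representation of $E$ is needed so that $D$ is a genuine C*-algebra containing $A$ and the reduced free product makes sense.

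The main obstacle is the norm law of large numbers: showing $\|\frac1n\sum_i(y_i-E_A(y_i))\|\to0$ for free-over-$A$ centered elements. I would prove it via the operator-valued Haagerup/Khintchine inequality for free sums, $\|\sum_i y_i\|\lesssim \max\bigl(\|\sum E_A(y_i^*y_i)\|^{1/2},\|\sum E_A(y_iy_i^*)\|^{1/2}\bigr)+\max_i\|y_i\|$, which is $O(\sqrt n)$. In (ii) there is an additional difficulty: producing unitaries, rather than merely selfadjoints, whose conjugation averages exactly to $E$.
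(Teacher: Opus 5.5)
Your part (i) is the paper's argument (Theorems \ref{thm:asainner-correspondence} and \ref{thm:weak-selfless-uniform-asa-inner}). The centered elements $s_ixs_i-\phi(x)$ lie in freely independent copies over $A$, and Voiculescu's inequality gives an $O(n^{-1/2})\|x\|$ error. One then transports through $\sigma$, takes selfadjoint lifts and picks a coordinate. The detour through $t_n$ is unnecessary.

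Part (ii) is correct but takes a different route from the paper.

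\emph{The paper's route} (Theorem \ref{thm:prob-dixmier-averaging}). It places a single Haar unitary $u\in C(\T)$ in $A*_B(B\otimes C(\T))$. It uses that the conjugates $(u^k)^*Au^k$, $k\in\Z$, are free with amalgamation over $B$ (Dykema--Shlyakhtenko). It then applies Voiculescu's inequality over $B$ to $E(a)-(u^k)^*au^k$. The output is one unitary $w=\sigma(u)\in B'\cap A^{\cU}$ whose powers do the averaging.

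\emph{Your route.} You take $u_i=f(s_i)$ in distinct free copies. Freeness over $A$ is then built into $D$, and the law of large numbers is literally the one from (i). What you must supply is the one-letter computation
\[
E_A\big(f(s_i)^*xf(s_i)\big)=E(x)+\Big|\int f\,d\mu\Big|^2\,\big(x-E(x)\big),\qquad x\in A,
\]
where $\mu$ is the semicircle law on $[-2,2]$. This follows from two facts: $s_i$ commutes with $B$, and $C^*(B,s_i)\cong B\otimes C([-2,2])$ is free from $A$ over $B$ (Lemma \ref{lem:prob-toeplitz-semicircular-identification}). Split $f(s_i)=\int f\,d\mu+f^\circ$ and $x=E(x)+x^\circ$; every term containing a centered letter from $C^*(B,s_i)$ has zero $E_A$-expectation.

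So the exact requirement is $|f|=1$ and $\int f\,d\mu=0$. Two choices that work:
\begin{enumerate}[(a)]
\item $f=e^{2\pi i F}$, with $F$ the semicircle distribution function; this gives a genuine Haar unitary.
\item $f(t)=e^{i\lambda t}$ with $\lambda\neq 0$ and $J_1(2\lambda)=0$.
\end{enumerate}
Your concrete candidates do not qualify as stated. A generic $\lambda$ fails, and the Cayley transform of a standard semicircular has mean $2-\sqrt5\neq 0$.

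\emph{What each buys.} Your route avoids the conjugate-freeness lemma and the passage from semicirculars to $C(\T)$. The paper's route yields the sharper statement that averaging along the powers of a single unitary suffices.
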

As a consequence, if $\phi$ is weakly selfless then it preserves closed two-sided ideals of $A$, and if it is also unital, then if preserves the tracial states of $A$.

In the state case,  a strong dichotomy holds for selfless C*-probability spaces: their underlying C*-algebras  are either purely infinite or have stable rank one \cite{robertselfless}, \cite{gould2026selfless}. Moreover, in the tracial state case, the C*-algebra has  strict comparison of positive elements by its unique trace. 

Although the purely infinite/stable-rank-one dichotomy does not persist for general cp maps, weak selflessness continues to impose regularity on the underlying algebra. We show that if a unital $C^*$-algebra $A$ admits a weakly selfless unital cp map, then  every bounded trace on $A^{\mathcal U}$ is essentially a limit trace (i.e., $A^{\cU}$ has no ``phantom traces''). The interactions between regularity properties and selflessness of expectations, cp maps, and C*-correspondences, will be the subject of future research.

To isolate the purely infinite case, we introduce  the notion of Toeplitz selflessness. We call $(A,\phi)$ Toeplitz selfless if the embedding
\[
(A;\phi)\to (C^*(A,\ell_{\phi});\phi\circ E_\Omega)
\] 
is positively existential, where this time $C^*(A,\ell_{\phi})$ is the Toeplitz-Pimsner C*-algebra associated to the KSGNS C*-correspondence $A\otimes_{\phi} A$, and
$\ell_\phi$ is the left creation operator. In the case  of an expectation $E$ inducing a  faithful GNS representation of $A$, Toeplitz selflessness is equivalent to simply asking that  the embedding
$$
(A;E)\to (A;E)*_B (B\otimes C; \mathrm{id}_B\otimes \kappa)
$$
is existential for some $C$ and a \emph{non-tracial} GNS-faithful state $\kappa$. Toeplitz selfless cp maps are \emph{completely} selfless.

\begin{theorem}
If a unital C*-algebra $A$ admits a  weakly Toeplitz selfless cp map $\phi\in CP(A)$ such that $\mathrm{Ideal}(\phi(a))=\mathrm{Ideal}(a)$ for all $a\in A_+$, then $A$ is strongly purely infinite (in the sense of Kirchberg and R{\o}rdam \cite{KirRorNonsimpleOinfty}). 
\end{theorem}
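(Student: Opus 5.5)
We aim to verify the Kirchberg--R{\o}rdam criterion: $A$ is strongly purely infinite if for all $a,b\in A_+$, every $x\in A$ and every $\epsilon>0$ there exist $d_1,d_2\in A$ with
\[
\|d_1^*a^2d_1-a^2\|<\epsilon,\qquad \|d_2^*b^2d_2-b^2\|<\epsilon,\qquad \|d_1^*axbd_2\|<\epsilon .
\]
Equivalently, we could show that $A$ is purely infinite and $\mathcal{O}_\infty$-stable in a suitable ideal-related sense. Weak Toeplitz selflessness means the following: the embedding of $A$ into the infinite amalgamated reduced free product over $A$ of copies of $C^*(A,\ell_\phi)$ is positively existential, where each copy is equipped with $E_\Omega$. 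Because the criterion above is a finite family of norm inequalities with witnesses existing in $A$, positive existentiality allows us to construct the witnesses $d_1,d_2$ inside the big Toeplitz-type algebra $D$. We then pull them back to $A^{\cU}$ through $\sigma$, and from there to $A$ along the ultrafilter.

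Inside $D$ we take two creation operators $\ell_1=\ell_\phi^{(1)}$ and $\ell_2=\ell_\phi^{(2)}$ from different free copies. Their Pimsner relations give
\[
\ell_i^*x\ell_j=\delta_{ij}\,\phi(x)\qquad (x\in A).
\]
In particular $\ell_1^*axb\ell_2=0$. The operators $\ell_i$ do not directly compress $a^2$ to $a^2$, because $\ell_i^*a^2\ell_i=\phi(a^2)$. We correct for this using the hypothesis $\mathrm{Ideal}(\phi(a))=\mathrm{Ideal}(a)$. Since $a^2$ lies in the ideal generated by $\phi(a^2)$, for every $\epsilon$ we can find $y_1,\dots,y_n\in A$ with
\[
\Bigl\|\sum_k y_k^*\phi(a^2)y_k-(a^2-\epsilon)_+\Bigr\|<\epsilon .
\]
Using further free copies, say $\ell^{(i,k)}$, we set $d_1=\sum_k \ell^{(1,k)}y_k$. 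The cross terms $\ell^{(1,k)*}a^2\ell^{(1,k')}$ vanish for $k\neq k'$, so
\[
d_1^*a^2d_1=\sum_k y_k^*\phi(a^2)y_k\approx (a^2-\epsilon)_+ .
\]
We define $d_2=\sum_k \ell^{(2,k)}z_k$ in the same way for $b$. Then $d_1^*axbd_2=0$ exactly. A small functional-calculus adjustment replaces $(a^2-\epsilon)_+$ by $a^2$ up to $\epsilon$, for instance by replacing $a$ with $f_\epsilon(a)$, or by using the standard approximate form of the criterion. This realizes the required inequalities in $D$. Applying $\sigma$ then gives witnesses in $A^{\cU}$, and lifting to a representing sequence gives witnesses in $A$.

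The step I expect to be the main obstacle is making the ideal condition yield a \emph{uniform, exact-enough} decomposition, because the criterion needs $\|d_1^*a^2d_1-a^2\|$ small and not merely approximation after cutting down. I would handle this with a Cuntz-subequivalence argument. The equality of ideals gives $a\in\overline{A\phi(a)A}$, hence $(a-\epsilon)_+\precsim \phi(a)\otimes 1_n$. Pure infiniteness of $D$-type elements then allows $n=1$ after further use of the free copies: the isometries $\ell^{(k)}/\|\cdot\|$ stack matrix amplifications, much as in $\mathcal{O}_\infty$. The criterion is stable under the standard $\epsilon$-cutdown, by Kirchberg--R{\o}rdam's reformulation, so the approximate version suffices.

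A secondary point to check is that positive existentiality is enough. Everything used here is a $*$-polynomial norm condition of the form ``$\|p\|<\epsilon$'' with existential quantifiers, and these positive formulas transfer. Note also that $\phi$ need not be unital here; only the Pimsner relations and freeness are used. I would therefore present the argument for the non-unital case without change.
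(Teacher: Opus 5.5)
Your proof is correct and is essentially the paper's argument: orthogonal copies of $\xi_\phi$ in $\ell^2(A\otimes_\phi A)$ give $\ell_i^*b\ell_j=\delta_{ij}\phi(b)$ for $b\in A$, the ideal hypothesis makes the diagonal compressions approximate $a^2$ and $b^2$, the off-diagonal term vanishes exactly, and $\sigma$ plus a choice of coordinate transfers the witnesses to $A$. The only difference is that you take $d_1=\sum_k\ell^{(1,k)}y_k$ directly from $a^2\in\mathrm{Ideal}(\phi(a^2))$, whereas the paper first proves $a\precsim\phi(a)$ in the Toeplitz algebra (using $a\in\mathrm{Ideal}(\phi^2(a))$) and then sets $d_i=\ell_ix_i$. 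Your ``main obstacle'' paragraph is superfluous, since $\|(a^2-\epsilon)_+-a^2\|\le\epsilon$ already gives $\|d_1^*a^2d_1-a^2\|<2\epsilon$; its claim that the normalized $\ell^{(k)}$ are isometries is also false unless $\phi(1)$ is a scalar multiple of $1$, because $\ell^{(k)*}\ell^{(k)}=\phi(1)$.
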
	

A substantial part of the paper is devoted to permanence results for the new notions of selflessness. We mention some here:

\begin{theorem}\label{thm:intro-permanence}
Let $A$ be a unital C*-algebra.
\begin{enumerate}[(i)]
\item
If $\phi_n\in CP(A)$ for $n=1,\ldots$ are weakly selfless maps, and $\phi_n\to \phi$ in point norm, then $\phi$ is weakly selfless.
\item
If $\phi,\psi\in CP(A)$ commute, $\phi$ is selfless, and $\psi$ is approximately selfadjoint inner, then $\psi\circ\phi$ is selfless.
\item
If $\phi\in CP(A)$ is selfless, $\psi\in CP(B)$ is  approximately selfadjoint inner, and $B$ is exact, then $\phi\otimes \psi$ is selfless.
If $\phi$ is completely selfless, then same conclusion holds without the assumption that $B$ is exact.
\item
If $E_1\colon A_1\to B$ is a selfless conditional expectation, and $E_2\colon A_2\to B$ is a conditional expectation inducing a faithful GNS representation, then $E_1*_B E_2\colon A_1*_B A_2\to B$ is selfless. 
\end{enumerate}
\end{theorem}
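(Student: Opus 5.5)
The four items are proved separately, but all rest on one mechanism. Positive existentiality of an embedding $A\to D$ is witnessed by a $*$-homomorphism $D\to A^{\cU}$ that restricts to the diagonal on $A$. A diagonal (reindexing) argument lets us replace ``a sequence of approximate witnesses'' by a genuine witness into a single ultrapower, since $(A^{\cU})^{\cV}$ embeds appropriately into some $A^{\cU'}$ by countable saturation. Thus each item reduces to producing approximate representations of the relevant generators inside $A^{\cU}$, with the required relations holding up to small error.

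\textbf{Item (i).} The plan is to use the universal description of $C^*(A,s_\phi)*_A C^*(A,s_\phi)*_A\cdots$. By Shlyakhtenko's construction it is generated by $A$ and semicircular elements $s_\phi^{(k)}$, and it is determined by the $A$-valued moments. These moments are polynomial expressions in iterates of $\phi$ applied to elements of $A$, namely noncrossing-pair formulas.

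Suppose $\phi_n\to\phi$ in point-norm. Then every fixed $A$-valued moment of the $\phi_n$-system converges in norm to the corresponding moment of the $\phi$-system. From this I would deduce that the $\phi_n$-algebras converge to the $\phi$-algebra in the sense that $\|p(a,s^{(k)}_{\phi_n})\|\to\|p(a,s^{(k)}_\phi)\|$ for each noncommutative $*$-polynomial $p$ with coefficients in $A$. For this I will use the Fock-space realization: the semicircular elements $s^{(k)}_{\phi_n}$ act on the full Fock module over $\bigoplus_k A\otimes_{\phi_n}A$, and I would realize all these modules as corners of a common module built from the Stinespring dilations.

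Each $\phi_n$-algebra maps into $A^{\cU}$ by weak selflessness. Composing with the approximating norms and diagonalizing over $n$ then yields a $*$-homomorphism from the $\phi$-algebra into an ultrapower. The \emph{main obstacle} is the norm continuity, specifically the lower semicontinuity of the norm. I would handle it by noting that the reduced amalgamated free product norm is the norm of the GNS module, and that this norm is a supremum of moment expressions of the form $\|\langle \xi,p^*p\,\xi\rangle\|$. Such expressions converge for fixed polynomials, which gives the semicontinuity needed to produce a homomorphism rather than merely a cp map.

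\textbf{Item (ii).} Write $\psi\approx\sum_i h_i\cdot h_i$. If $s$ is the semicircular element with covariance $\phi$ and $\psi$ commutes with $\phi$, I will take $s'=\sum_i h_i s\, e_i$, where the $e_i$ are freely independent copies obtained by iterating selflessness, or else use matrix amplification. This $s'$ has covariance approximately $\psi\circ\phi$. Selflessness of $\phi$ gives copies of $s$ in $A^{\cU}$, and commutation of $\psi$ with $\phi$ ensures that the intertwining with $\phi^{\cU}$ is preserved.

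\textbf{Items (iii) and (iv).} For (iii), I tensor the witness for $\phi$ with the identity on $B$. Exactness of $B$ is needed so that $(A^{\cU})\otimes B\to(A\otimes B)^{\cU}$ is well defined on the minimal tensor product; complete selflessness provides instead a nuclear-type witness that removes this requirement. I then apply (ii) to $\id\otimes\psi$ to handle the factor $\psi$. For (iv), given $\sigma\colon A_1*_B(B\otimes C)\to A_1^{\cU}$, I extend it to $(A_1*_B A_2)*_B(B\otimes C)$ by associativity and commutativity of amalgamated free products. I will use the fact that $B\otimes C$ is free from $A_1*_BA_2$ over $B$ if it is free from $A_1$ and $A_2$ jointly, and realize the resulting freeness inside $(A_1*_BA_2)^{\cU}$ via the free-product functoriality of ultrapower embeddings.
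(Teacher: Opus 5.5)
\textbf{Item (i) has a genuine gap, and it is the central step of your argument.} To make
$p(a,s^{(k)}_\phi)\mapsto[(\sigma_n(p(a,s^{(k)}_{\phi_n})))_n]$ a well-defined, contractive $*$-homomorphism on the $\phi$-algebra, you need the \emph{upper} bound
\[
\lim_{n\to\cU}\|p(a,s^{(k)}_{\phi_n})\|\le\|p(a,s^{(k)}_{\phi})\|
\]
for every $*$-polynomial $p$. Your argument expresses the Fock-module norm as a supremum, over vectors $\eta=q\Omega$, of moment expressions $\|\langle\eta,p^*p\,\eta\rangle\|$, each of which converges. That only yields $\|p(a,s_\phi)\|\le\liminf_n\|p(a,s_{\phi_n})\|$, because a supremum of convergent quantities is merely lower semicontinuous. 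This inequality gives a homomorphism from the C*-algebra generated by the sequences $[(p(a,s_{\phi_n}))_n]$ \emph{onto} the $\phi$-algebra, which is the wrong direction. Moment (weak) convergence by itself does not supply the strong-convergence half.

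The Stinespring idea does not repair this. Point-norm convergence does not give a common module in which $\xi_{\phi_n}\to\xi_\phi$ in norm. For example, take $\phi_n(f)=f(x_n)1$ on $C[0,1]$ with $x_n\to x\neq x_n$: in any common correspondence $\langle\xi_{\phi_n},\xi_\phi\rangle=0$, so $\|\xi_{\phi_n}-\xi_\phi\|=\sqrt2$.

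The missing idea is the universal property of the Toeplitz--Pimsner algebra. Pointwise convergence of the coefficients gives, by Lemma~\ref{lem:singly-generated-embedding}, a covariant map
\[
\ell^2(A\otimes_\phi A)\to\prod_\cU\ell^2(A\otimes_{\phi_n}A),\qquad \xi^{(k)}_\phi\mapsto[(\xi^{(k)}_{\phi_n})_n],
\]
over $\Delta_A$, and it preserves the real structures. Universality then yields a $*$-homomorphism $\cT(\ell^2(A\otimes_\phi A))\to\prod_\cU\cT(\ell^2(A\otimes_{\phi_n}A))$ that restricts to the semicircular algebras (Lemma~\ref{lem:toeplitz-exactness}). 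Composing with the witnesses $\sigma_n$ finishes the proof. No norm continuity is needed. This is the paper's route: Theorem~\ref{thm:cp-map-permanence}(ii) via Theorem~\ref{thm:ultraproduct-permanence-selfless}.

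\textbf{Item (ii) has two related problems.} First, the same limiting step recurs when you pass from exactly selfadjoint inner maps $\sum_i h_i\cdot h_i$ to an a.s.a.\ inner $\psi$. ``Covariance approximately $\psi\circ\phi$'' must be upgraded to an exact intertwining with $(\psi\phi)^\cU$. That requires the ultraproduct/Toeplitz-exactness step, together with the observation that the maps $\alpha$ satisfying $\alpha^\cU\circ\sigma=\Delta_A\circ\alpha\circ E$ form a set closed under operator-norm limits and post-composition by cb maps; this set therefore contains $\psi\circ\phi$.

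Second, as written your $s'=\sum_i h_i s\,e_i$ is not selfadjoint, so it cannot be the image of a semicircular generator coming from the real structure $\std$. Naive symmetrizations $hs+sh$ introduce cross terms $h\phi(\cdot\,h)+\phi(h\,\cdot)h$. The paper's device is to take, for selfadjoint $h$, the two real vectors $p_h=\frac12(\xi_\phi h+h\xi_\phi)$ and $q_h=\frac1{2i}(\xi_\phi h-h\xi_\phi)$ in $\std$. Their coefficients sum to $\frac12(h\phi(\cdot)h+\phi(h\cdot h))$, with the cross terms cancelling. Hence $(A\otimes_\theta A,\std)$, where $\theta=\frac12(\psi\phi+\phi\psi)$, embeds in $(A\otimes_\phi A,\std)^{\oplus 2n}$. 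Commutation is exactly what makes $\theta=\psi\phi$, which is an extended coefficient of $\phi$, so Theorem~\ref{thm:weak-containment-permanence-selfless}(ii) applies.

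\textbf{Items (iii) and (iv).} Your treatment of (iii) follows the paper: tensor the $\ell^2$ witness with $\id_B$, using exactness or complete selflessness, then apply (ii) to $\id\otimes\psi$. It therefore inherits the issues in (ii). Your treatment of (iv), via associativity together with free exactness of reduced amalgamated free products, also follows the paper.
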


In \cite{ozawa2025proximalityselflessnessgroupcalgebras}, Ozawa shows that a tensor product of exact selfless C*-algebras is selfless. Theorem \ref{thm:intro-permanence} (iii) extends this result to cp maps, while also relaxing the assumption on one of the cp maps to be approximately selfadjoint inner. Specializing to states, this shows that if $(A,\rho)$
is selfless, and $B$ is exact, unital, simple, with unique trace $\tau$,  then $(A\otimes B, \rho\otimes \tau)$ is selfless. This yields several new examples of $C^*$-selfless groups, including $G\times H$ where $G$ has Ozawa's PHP property (such as $C^*$-simple acylindrically hyperbolic or linear groups) and $H$ is $C^*$-simple.

 %In \cite{ozawa2025proximalityselflessnessgroupcalgebras}, Ozawa introduced the PHP property for discrete groups and showed that if $G$ has the PHP property then $C_r^*(G)$ is completely selfless. Classes of groups with the  PHP property include linear C*-simple groups and acylindrically hyperbolic C*-simple groups \cite{ozawa2025proximalityselflessnessgroupcalgebras, vigdorovich2026selflessreducedcalgebraslinear, yang2025extreme}. 
 %On the other hand, there exist C*-selfless groups (i.e., such that $C_r^*(G)$ is selfless) that fail to have the PHP property. Indeed, by the results discussed in the preceding paragraph,    if $G$ has the PHP property, and $H$ is C*-simple, then $G\times H$ is C*-selfless.  But such  direct products may fail to satisfy the PHP property (see Remark \ref{rem: no-PHP-selfless}). 

A natural setting to investigate selflessness of expectations is the reduced crossed products. By an adaptation of Ozawa's PHP technique we prove the following result:

\begin{theorem}
    If a group $G$ with the PHP property acts on a unital C*-algebra $A$ by approximately inner automorphisms, then the conditional expectation 
$E\colon A\rtimes_r G\to A$ is selfless.
\end{theorem}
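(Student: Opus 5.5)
The plan is to use the characterization of selfless expectations given in the introduction. $E\colon A\rtimes_r G\to A$ is selfless iff for some $C^*$-probability space $(C,\kappa)$ with $C\neq\C$ there is a $*$-homomorphism
$$
\sigma\colon (A\rtimes_r G)*_A (A\otimes C)\to (A\rtimes_r G)^{\cU}
$$
that restricts to the diagonal inclusion on $A\rtimes_r G$ and intertwines $E*_A(\mathrm{id}_A\otimes\kappa)$ with $E^{\cU}$. I would take $C=C^*_r(\Z)$ (or $C(\T)$) with the Haar trace, so $\kappa$ is tracial. Then it suffices to produce a unitary $u\in (A\rtimes_r G)^{\cU}$ with three properties:
\begin{itemize}
\item[(a)] $u$ commutes with $\Delta(A)$;
\item[(b)] $E^{\cU}(u^k)=0$ for $k\neq 0$;
\item[(c)] $u$ is free from $A\rtimes_r G$ with amalgamation over $A$ with respect to $E^{\cU}$. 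That is, $E^{\cU}$ vanishes on alternating products of elements $x_j\in A\rtimes_r G$ with $E(x_j)=0$ and elements $a_ju^{k_j}$ with $k_j\neq0$, $a_j\in A$.
\end{itemize}
Once such a $u$ exists, the universal property of the reduced amalgamated free product (faithfulness of the GNS representation of $E*_A(\mathrm{id}\otimes\kappa)$, combined with $E^{\cU}$ being GNS-faithful on the image) gives $\sigma$.

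To construct $u$, I would follow Ozawa's PHP technique. The PHP property yields, for each finite $F\subseteq G\setminus\{e\}$ and $\varepsilon>0$, elements of $C^*_r(G)$, typically built from a group element $s$ with proximal dynamics on a boundary. For these, words alternating between $\lambda_s^{k}$ and elements supported on $F$ have $\tau$-small expectation, so that in the ultrapower one gets a Haar unitary $v\in C^*_r(G)^{\cU}$ free from $C^*_r(G)$. I would repeat that argument inside $A\rtimes_r G$, replacing $\tau$ by $E$. The key point is that for $x=\sum_g a_g\lambda_g$, words in $x_j$ and $\lambda_s^{k}$ have $E$-value computed by group-word combinatorics with coefficients in $A$. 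The PHP estimates then control $\|E(\cdot)\|$ through the norms of the coefficients, using the Fourier-coefficient bound $\|a_g\|\le\|x\|$ and density of finitely supported elements. This handles (b) and (c) for $v$.

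The commutation with $A$ in (a) is where approximate innerness enters, and I expect it to be the main obstacle. The unitaries $\lambda_s$ do not commute with $A$: $\lambda_s a\lambda_s^*=\alpha_s(a)$. Since $\alpha_s$ is approximately inner, pick unitaries $w_n\in A$ with $w_n a w_n^*\to\alpha_s(a)$, and set $u=(w_n^*\lambda_{s})_n$. Then $u$ commutes with $\Delta(A)$ in the ultrapower. Powers $u^k$ are of the form $b\lambda_{s^k}$ with $b$ a unitary of $A$, namely a product of $\alpha$-twisted $w_n$'s. The difficulty is that the freeness estimate must be run with $A$-coefficients inserted between group elements, and these coefficients now also change with $n$. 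I would handle this by noting that the PHP/proximality bound is a statement about supports in $G$: $E$ of a product $\prod b_j\lambda_{g_j}$ vanishes unless $g_1\cdots g_m=e$. Hence the same counting bound holds uniformly for all unitary coefficients in $A$. For $k\neq0$ one has $E(u^k)=0$ exactly, since $s$ has infinite order.

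To make the group element $s$ vary so as to achieve the limit, I would use a diagonal sequence $s_n$ over finite sets exhausting $G$. Along a countable dense set, the approximately inner condition is applied only to $\alpha_{s_n}$, with $w_n$ chosen to $1/n$-approximate on the first $n$ elements of a dense sequence in $A$ (assuming separability, or using a net indexed by finite sets otherwise).

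Finally, I would check that the resulting $\sigma$ intertwines the expectations on the whole free product by continuity and linearity. This reduces to (b) and (c) on the dense $*$-algebra spanned by alternating words.
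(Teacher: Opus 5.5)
There is a genuine gap, and it sits where the difficulty of the theorem lies: two steps of your plan fail.

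\emph{First, (a)--(c) do not give $\sigma$.} Conditions (a)--(c) only concern moments with respect to $E^{\cU}$. Put $D=C^*(\Delta(A\rtimes_r G),u)$ and $R=(A\rtimes_rG)*_A(A\otimes C(\T))$. The moment conditions identify the GNS representation of $(D,E^{\cU}|_D)$ with $R$, so they give a surjection $D\to R$, which is the wrong direction. The reduced amalgamated free product has no universal property for maps out of it. To get $\sigma\colon R\to D$ you need $\|p\|_D\le\|p\|_R$ for every noncommutative polynomial $p$, which is equivalent to $E^{\cU}$ being GNS-faithful on $D$. Nothing in your argument provides this, and $E^{\cU}$ is far from faithful on an ultrapower. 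This C*-norm control is precisely the hard part of selflessness.

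\emph{Second, the moment conditions cannot be met by $u=(w_n^*\lambda_{s_n})_n$.} PHP does not produce a single ``free'' group element. PHP is closed under direct products, so for instance $G=\mathbb F_2\times\mathbb F_2$ has it. For $G=G_1\times G_2$, $g\in G_1\setminus\{e\}$ and $h\in G_2\setminus\{e\}$, every $s\in G$ satisfies
\[
(g,e)\,s\,(e,h)\,s^{-1}\,(g^{-1},e)\,s\,(e,h^{-1})\,s^{-1}=e .
\]
Hence the alternating word $\lambda_{(g,e)}\,u\,\lambda_{(e,h)}\,u^*\,\lambda_{(g^{-1},e)}\,u\,\lambda_{(e,h^{-1})}\,u^*$ has all letters centered, yet $E^{\cU}$ of it is a unitary of $A^{\cU}$ (equal to $1$ when $A=\C$), not $0$. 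This holds whatever the $w_n$ and $s_n$ are. So the ``counting bound'' you propose to transfer from the group case does not exist, and no Haar unitary of this form is free from $A\rtimes_r G$ over $A$.

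\emph{How the paper avoids both problems.} It does not look for a Haar unitary at all. It applies the criterion of Proposition~\ref{prop:cp-map-selflessness-criterion-epsilon} in the larger algebra $C^*(A\rtimes_r G,\ell^\infty(G))$. From all $n$ PHP triples $(t_i,\Gamma_i,\Gamma_i^+)$ and the projections $P_i=P_{\Gamma_i}$, $P_i^+=P_{\Gamma_i^+}$ it builds
\[
T=\frac{1}{\sqrt{2n}}\sum_{i=1}^n\big(P_i^+\lambda_{t_i}w_i+w_i^*\lambda_{t_i}^*(1-P_i)\big).
\]
This $T$ satisfies $T^*xT\approx E(x)$, $E(xTT^*x^*)\approx 0$, and $T+T^*\approx \frac{1}{\sqrt{2n}}\sum_i(\lambda_{t_i}w_i+w_i^*\lambda_{t_i}^*)\in A\rtimes_r G$; P3's $\delta\sqrt n$ bounds control the errors. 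The three properties then do the following:
\begin{enumerate}[(1)]
\item $T^*xT\approx E(x)$ gives a representation of the correspondence $(A\rtimes_rG)\otimes_E(A\rtimes_rG)$. Its Toeplitz--Pimsner algebra is $(A\rtimes_rG)*_A(A\otimes\cT)$, and its universal property yields the *-homomorphism with norm control automatically.
\item $E(xTT^*x^*)\approx 0$ gives the intertwining of expectations.
\item $T+T^*\approx s\in A\rtimes_r G$ places the semicircular subalgebra $(A\rtimes_rG)*_A(A\otimes C([-2,2]))$ into the ultrapower of $A\rtimes_r G$.
\end{enumerate}
Your use of approximate innerness is sound: twisting $\lambda_s$ by a unitary of $A$ that approximately implements $\alpha_s$ makes it approximately commute with $A$. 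The paper does exactly this with the $w_i$.
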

Combined with the previous permanence results,
this yields new examples of selfless C*-algebras. For example, since every automorphism of the Jiang-Su algebra $\mathcal Z$
is approximately inner, the  expectation $E\colon \mathcal Z\rtimes_r G\to \mathcal Z$ is selfless for any action of a PHP group $G$ on $\mathcal Z$.
Further, since $\mathcal Z$ is simple with unique trace $\tau$, it follows by Theorem \ref{thm:intro-permanence} (ii) that the trace $\tau\circ E\colon \mathcal Z\rtimes_r G\to \C$  is selfless, i.e., $(\mathcal Z\rtimes_r G, \tau\circ E)$ is a selfless C*-probability space. This notably yields new examples of $C^*$-algebras with strict comparison.
  
Our final group of results concerns the interaction between selflessness and nuclearity. Here selflessness becomes a criterion for tensorial absorption.
Indeed, the purely infinite/unique trace-strict comparison dichotomy, together with the Kirchberg and Matui–Sato absorption theorems, implies that every nuclear selfless $C^*$-probability space has $\mathcal Z$-stable C*-algebra.

A direct link between selflessness of cp maps and  tensorial stability is that if  $A$ is separable and the identity map $\mathrm{id}_A$ is weakly selfless, then $A$
is $\mathcal Z$-stable, while if $A$ is separable and $\mathrm{id}_A$ is weakly Toeplitz selfless, then $A$ is $\mathcal O_\infty$-stable.  We deduce from this and the permanence results on selflessness  the following:

\begin{theorem}
Let $A$ be a separable unital C*-algebra and $\phi\in CP(A)$.
\begin{enumerate}[(i)]
\item 
If $\phi$ is both weakly selfless and real relatively nuclear (Definition \ref{def:relative-nuclearity}), then $A$ is $\mathcal Z$-stable.
\item 
If $\phi$ is both weakly Toeplitz selfless and relatively nuclear (Definition \ref{def:relative-nuclearity}), then $A$ is $\mathcal O_\infty$-stable.
\end{enumerate}	
\end{theorem}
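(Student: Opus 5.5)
The strategy is to reduce both parts to the statement recalled just before the theorem: if $A$ is separable and $\mathrm{id}_A$ is weakly selfless, then $A$ is $\mathcal Z$-stable; and if $\mathrm{id}_A$ is weakly Toeplitz selfless, then $A$ is $\mathcal O_\infty$-stable. So it suffices to show that weak (Toeplitz) selflessness passes from $\phi$ to $\mathrm{id}_A$ under the relevant nuclearity hypothesis. For this I would combine the point-norm permanence result, Theorem~\ref{thm:intro-permanence}(i), with a composition permanence result in the spirit of Theorem~\ref{thm:intro-permanence}(ii).

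I expect real relative nuclearity (Definition~\ref{def:relative-nuclearity}) to say roughly that $\mathrm{id}_A$ is a point-norm limit of maps of the form $\alpha_n\circ\phi\circ\beta_n$ (or sums of such). Here $\alpha_n,\beta_n$ should be cp maps factoring through matrix algebras, and in the ``real'' version be built from selfadjoint data, compatible with the real structure generated by $1\otimes 1$. Granting this, the steps are as follows.

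\begin{enumerate}[(1)]
\item Show that if $\phi$ is weakly selfless, then so is $\alpha\circ\phi\circ\beta$. I would do this at the level of C*-correspondences. The KSGNS correspondence of $\alpha\circ\phi\circ\beta$, with its real vector, should embed into an amplification $M_k(A)\otimes_\phi M_k(A)$ of the correspondence of $\phi$ (or be obtained from it by compressing). Shlyakhtenko's construction is functorial for such maps, which preserve real vectors. Hence one gets a $*$-homomorphism
\[
C^*(A,s_{\alpha\phi\beta})\to M_k\bigl(C^*(A,s_\phi)\bigr)
\]
over $A$, compatible with the amalgamated free products over $A$.
\item Pass to the infinite amalgamated free product and compose with the positively existential embedding given by weak selflessness of $\phi$, tensored with $M_k$. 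Then cut down with a corner (matrix unit) to return to $A^{\cU}$. Weak selflessness imposes no intertwining of the cp maps, so only the $*$-homomorphism over $A$ needs to be produced.
\item Apply Theorem~\ref{thm:intro-permanence}(i) to the approximants $\alpha_n\circ\phi\circ\beta_n\to\mathrm{id}_A$. This shows that $\mathrm{id}_A$ is weakly selfless, and then the $\mathcal Z$-stability criterion finishes part (i).
\end{enumerate}

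For (ii) the same scheme applies with the Toeplitz--Pimsner algebras $C^*(A,\ell_\phi)$ in place of the semicircular ones. Here functoriality is easier and needs no real structure: any isometric bimodule map between the correspondences induces a map of Fock spaces intertwining the creation operators. This is why plain relative nuclearity suffices there. One also needs the Toeplitz analogue of Theorem~\ref{thm:intro-permanence}(i), which I would prove the same way: a diagonal ultraproduct argument, in which the existential embeddings for $\phi_n$ assemble into one for $\phi$ via the continuity of the Shlyakhtenko/Pimsner constructions in point-norm.

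The main obstacle is step (1) in the semicircular case. Exhibiting a homomorphism $C^*(A,s_{\alpha\phi\beta})\to M_k(C^*(A,s_\phi))$ requires the semicircular element for $\alpha\phi\beta$ to be realized as $\sum_{i,j} a_i\,s_\phi\, a_j^*$-type expressions. Such expressions are semicircular with the correct covariance only when the coefficients are compatible with the real structure, which is exactly where the ``real'' nuclearity must be used. I would first try the case where $\alpha(x)=v^*xv$ and $\beta(x)=w^*xw$ with $v,w$ row/column matrices of selfadjoints. I would verify there that $v^*(1\otimes s_\phi)v$ is again $A$-valued semicircular with covariance $\alpha\circ\phi\circ\beta$ by checking the free moment–cumulant formula, and then reduce general approximants to sums of these.
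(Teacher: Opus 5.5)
Your proposal is correct and is essentially the paper's proof. The paper shows that each approximant $\psi_\lambda(a)=\sum_k\sum_{i,j}x_{k,i}\phi(x_{k,i}^*ax_{k,j})x_{k,j}^*$ from Definition~\ref{def:relative-nuclearity} is weakly (Toeplitz) selfless, passes to the point-norm limit $\mathrm{id}_A$ (Theorem~\ref{thm:cp-map-permanence}(i),(ii), Theorem~\ref{thm:toeplitz-cp-map-permanence}), restricts $A\otimes\mathcal Z^{*\infty}\cong A*_A(A\otimes\mathcal Z^{*\infty})\to A^{\cU}$ (resp.\ the $\mathcal O_\infty$ version) to get a unital embedding into $A'\cap A^{\cU}$, and concludes by Toms--Winter (resp.\ the corresponding $\mathcal O_\infty$-absorption criterion).

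Your step (1) is easier than you expect: it needs neither matrix amplifications with a corner cut nor a moment--cumulant check. The ``real'' condition is not selfadjointness; it is that the same $x_{k,i}$ occur as $x_{k,i}^*\,\cdot\,x_{k,j}$ inside and $x_{k,i}\,\cdot\,x_{k,j}^*$ outside. Hence $v_k=\sum_i x_{k,i}\xi_\phi x_{k,i}^*$ lies in $\std$, which is closed under $v\mapsto a^*va$ for every $a\in A$, and $\kappa_{v_k,v_k}$ is the $k$-th block of $\psi_\lambda$. So $\xi_{\psi_\lambda}\mapsto(v_1,\dots,v_m)$ embeds $(A\otimes_{\psi_\lambda}A,\std)$ into $(A\otimes_\phi A,\std)^{\oplus m}$ by Lemma~\ref{lem:singly-generated-embedding}, and Theorem~\ref{thm:weak-containment-permanence-selfless} applies.
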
	
The proof of Ozawa's \cite[Theorem 3]{ozawa2025proximalityselflessnessgroupcalgebras} shows that if $A$ is simple, unital, and purely infinite, then $(A,\rho)$
is  Toeplitz selfless (for any state $\rho$). Combined  with (ii) of the previous theorem, this gives  a new proof of Kirchberg's $\mathcal O_\infty$-stability theorem (\cite{RordamStormer-EMS126}): if $A$ is separable, purely infinite, simple, unital, and nuclear, then $A\otimes \mathcal O_\infty\cong A$.

As another application of our results on selflessness and nuclearity, we establish a special case of a conjecture of Hayes \cite[Conjecture 1.2]{Schafhauser2026}. The conjecture asserts that if $G_1$ and $G_2$ are MF groups with a common amenable subgroup $H$, then the amalgamated free product $G_1 *_H G_2$ is MF. We prove that this is the case if $G_1=G_2$: if $G$ is an MF group and $H\leq G$ is amenable, then $G *_H G$ is MF.

\subsection*{Organization of the paper} The paper is organized as follows: In Sections \ref{sec: correspondences} and \ref{sec: weak containment} we develop the general framework of selflessness for C*-correspondences.
In Section \ref{sec: cp maps} we consider selfless  cp maps. 
In Section \ref{sec: regularity} we obtain some regularity properties for the underlying C*-algebra of a weakly selfless cp map.
In Section \ref{sec: expectations} we
specialize our results to conditional expectations. In Section \ref{sec: nuclearity} we investigate the interaction between selflessness and nuclearity. In Section \ref{sec: crossed products} we consider expectations arising from crossed products.

\subsection*{Acknowledgments} We are grateful to the Brin Mathematical Research Center for hosting the workshop ``Recent developments in operator algebras'' organized by the third author, wherein this work began.

\subsection*{AI statement} ChatGPT was used for English language editing, proofreading, and grammatical
corrections. All mathematical content was generated solely by the human authors.

\section{Preliminaries}
Throughout, all C*-algebras are assumed unital and all *-homomorphisms are unital. 

\subsection{C*-correspondences}
By an $A$-$B$ C*-correspondence we mean a right Hilbert $B$-module 
$H$ together with a unital *-homomorphism $A\to B(H)$, which we write as a left action.  (We denote by   $B(H)$ the C*-algebra of adjointable operators on $H$.)

We review some facts about C*-correspondences here. For more background, see \cite[Chapter 4]{Lance1995} and \cite{brown2008textrm}.

We call an $A$-$A$ C*-correspondence $H$ a C*-correspondence  over $A$. Note that $A$ is a C*-correspondence over itself with the obvious left and right actions and standard Hilbert C*-module inner product $\langle a,b\rangle =  a^*b$.

A covariant map $(\Phi,\sigma)\colon H_1\to H_2$ between C*-correspondences respectively over $A$ and $B$ consists of  a *-homomorphism $\sigma\colon A\to B$ and a linear map $\Phi\colon H_1\to H_2$ such that 
\begin{enumerate}
\item 
$\Phi(avb)=\sigma(a)\Phi(v)\sigma(b)$ for all $a,b\in A$ and $v\in H_1$,
\item 
$\sigma(\langle v,w\rangle  )=\langle \Phi(v),\Phi(w)\rangle$ for all $v,w\in H_1$.	
\end{enumerate}	
We often emphasize $\Phi$ and refer to $\sigma$ as the underlying *-homomorphism.
A covariant map $H\to B$ into the identity C*-correspondence of a C*-algebra $B$ is called a representation of $H$.

We often use the following lemma:
\begin{lemma}\label{lem:singly-generated-embedding}
Let $\sigma\colon A\to C$ be an embedding of C*-algebras. Let $H$ and $H'$ be C*-correspondences over $A$ and $C$, respectively. Suppose that $H$ is  
generated (as a closed bimodule) by an indexed collection of vectors $(v_i)_{i\in I}$, and that we have an assignment $v_i\mapsto w_i\in H'$ such  that
$$
\sigma(\langle v_i,av_j\rangle)=\langle w_i,\sigma(a)w_j\rangle
$$
for all $a\in A$ and all $i,j\in I$. Then there is a unique covariant embedding $H\to H'$ such that $v_i\mapsto w_i$ and with underlying *-homomorphism $\sigma$.
\end{lemma}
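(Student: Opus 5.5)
The plan is to define $\Phi$ on the algebraic bimodule generated by the $v_i$, check that it preserves inner products, and then extend by continuity. Let $H_0\subseteq H$ be the algebraic span of all finite sums $\sum_k a_k v_{i_k} b_k$ with $a_k,b_k\in A$. By hypothesis $H_0$ is dense in $H$. We want to set
$$
\Phi\Bigl(\sum_k a_k v_{i_k} b_k\Bigr)=\sum_k \sigma(a_k) w_{i_k}\sigma(b_k).
$$
Uniqueness is then immediate: any covariant map with underlying $\sigma$ sending $v_i\mapsto w_i$ must agree with this formula on $H_0$. It is also continuous, since covariance gives $\|\Phi(v)\|^2=\|\sigma(\langle v,v\rangle)\|\le\|v\|^2$, so it is determined by its values on the dense subspace $H_0$.

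The key computation is that inner products are preserved on these generating sums. For $x=\sum_k a_kv_{i_k}b_k$ and $y=\sum_l c_lv_{j_l}d_l$ we have
$$
\langle x,y\rangle=\sum_{k,l} b_k^*\langle v_{i_k}, a_k^*c_l v_{j_l}\rangle d_l ,
$$
using right $A$-linearity of the inner product in the second variable, conjugate linearity in the first, and adjointability of the left action. Applying $\sigma$ and the hypothesis gives
$$
\sigma(\langle x,y\rangle)=\sum_{k,l}\sigma(b_k)^*\langle w_{i_k},\sigma(a_k)^*\sigma(c_l)w_{j_l}\rangle\sigma(d_l),
$$
which equals $\langle \Phi(x),\Phi(y)\rangle$ computed the same way in $H'$.

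The main point is well-definedness. Suppose the formal expression $x$ represents $0$ in $H$. Then the formal image $X=\sum_k \sigma(a_k)w_{i_k}\sigma(b_k)$ satisfies $\langle X,X\rangle=\sigma(\langle x,x\rangle)=0$, so $X=0$. The inner-product identity above is an identity between formal sums, so it applies here. Hence $\Phi$ is a well-defined linear map on $H_0$.

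The inner-product identity gives $\|\Phi(x)\|^2=\|\sigma(\langle x,x\rangle)\|=\|\langle x,x\rangle\|=\|x\|^2$, because $\sigma$ is an injective $*$-homomorphism and hence isometric. So $\Phi$ is isometric on $H_0$ and extends uniquely to an isometric linear map $\Phi\colon H\to H'$. Bimodularity $\Phi(avb)=\sigma(a)\Phi(v)\sigma(b)$ holds on $H_0$ by construction, and the inner-product relation $\sigma(\langle v,w\rangle)=\langle\Phi(v),\Phi(w)\rangle$ holds there as well. Both extend to all of $H$ by continuity of the module operations and the inner products. This gives the desired covariant embedding, and it is injective since it is isometric.
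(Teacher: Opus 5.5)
Your proof is correct and complete. You define $\Phi$ on the algebraic span of the $a v_i b$ and use the inner-product identity twice: for well-definedness (a formal sum representing $0$ has image of zero norm) and for isometry (via injectivity of $\sigma$), then extend by continuity. The paper states this lemma without proof, but the same computation, reading off both well-definedness and extension from preservation of inner products, is what it uses for the map $\Psi$ in the proof of Lemma \ref{lem:selfless-covariant-characterization}.
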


Given $A$-$B$ and $B$-$C$ C*-correspondences $H_1$ and $H_2$, $H_1 \otimes_B H_2$ is obtained by ``separation and completion'' from their algebraic tensor product with semi-inner product defined as
$$
\langle v_1\otimes w_1,v_2\otimes w_2\rangle=\langle w_1,\langle v_1,v_2\rangle w_2\rangle. 
$$

\subsection{The Toeplitz-Pimsner C*-algebra} 
We recall the notation for the Toeplitz-Pimsner algebra associated to a C*-correspondence. For background, see \cite[Proposition 3.3]{Katsura2004} and \cite[Section 4.6]{brown2008textrm}.

Let $H$ be a C*-correspondence over $A$. Its Fock C*-correspondence is
$$
\mathcal F(H)=A\oplus \bigoplus_{n\geq 1} H^{\otimes_A n}.
$$
For $v\in H$, let $\ell_v\in B(\mathcal F(H))$ denote the left
creation operator. Together with the left action of $A$ on $\mathcal F(H)$,
the assignment $v\mapsto \ell_v$ gives a representation of $H$ on
$\mathcal F(H)$. We denote by
$$
\cT(H)=C^*(A,\{\ell_v:v\in H\})
$$
the Toeplitz-Pimsner C*-algebra of $H$. We denote by 
$E_\Omega\colon \cT(H)\to A$ the vacuum expectation, obtained by
compressing to the $A$-summand:
\[
E_\Omega(x)=P_0xP_0.
\]
The representation of $H$ in $\cT(H)$ has the following universal property:
if $(\Phi,\sigma)$ is a representation of $H$ in a C*-algebra $D$, then there
is a *-homomorphism $\pi\colon \cT(H)\to D$ such that
\[
\pi(\ell_v)=\Phi(v),\qquad \pi(a)=\sigma(a)
\]
for all $v\in H$ and $a\in A$. 
See \cite[Proposition 6.5]{Katsura2004},  \cite[Theorem 4.6.18]{brown2008textrm}.

We will use the following application of the functoriality of  Toeplitz-Pimsner C*-algebra construction, which we informally refer to as ``Toeplitz exactness''; see \cite{gao-KE}. 

\begin{lemma}[Toeplitz exactness]\label{lem:toeplitz-exactness}
Let $\cU$ be an ultrafilter on $I$. Let $H$ be a C*-correspondence over $A$, 
and let $H_i$ be a C*-correspondence
over $A_i$, for each $i\in I$. Suppose that there is a covariant map
$$
(\Phi,\sigma)\colon H\longrightarrow \prod_{\cU}H_i,
$$
where $\sigma\colon A\to\prod_{\cU}A_i$ is the underlying *-homomorphism. Then there exists a *-homomorphism
$$
\widetilde{\sigma}\colon\cT(H)\longrightarrow
\prod_{\cU}\cT(H_i)
$$
extending $\sigma$ and satisfying $\widetilde{\sigma}(\ell_v)=[(\ell_{v_i})_i]$ whenever $\Phi(v)=[(v_i)_i]$. Moreover,
$$
E_{\cU}\circ\widetilde{\sigma} = \sigma\circ E_H,
$$
where $E_{\cU}\colon\prod_{\cU}\cT(H_i)\to \prod_{\cU}A_i$ is given by $E_{\cU}([(x_i)_i])=[(E_{H_i}(x_i))_i]$.

Suppose, in addition, that $H$ and the $H_i$ are equipped with real
structures $K$ and $K_i$, respectively, and that $\Phi(K)\subseteq\prod_{\cU}K_i$. 
Then $\widetilde{\sigma}$ restricts to a *-homomorphism
$$
\cS(H,K)\longrightarrow\prod_{\cU}\cS(H_i,K_i)
$$
intertwining the corresponding expectations. (For definition of real structures and $\cS$, see Section \ref{subsec shlyakhtenko}.)
\end{lemma}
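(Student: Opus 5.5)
The plan is to build $\widetilde{\sigma}$ from the universal property of $\cT(H)$, and to get the vacuum intertwining by computing on Wick-ordered monomials.

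\emph{Construction of $\widetilde\sigma$.} Set $D=\prod_{\cU}\cT(H_i)$. The diagonal map $(a_i)\mapsto(a_i)$ gives a *-homomorphism $\iota\colon\prod_\cU A_i\to D$. For $v\in H$ with $\Phi(v)=[(v_i)]$, put $\Psi(v)=[(\ell_{v_i})_i]\in D$.
\begin{itemize}
\item $\Psi$ is well defined: $\|\ell_{v_i}\|=\|v_i\|$, so a bounded representative gives a bounded sequence, and $\cU$-null differences give null operators.
\item $\Psi$ is linear and bimodular: $\Psi(avb)=\iota\sigma(a)\Psi(v)\iota\sigma(b)$, since $\ell_{a_iv_ib_i}=a_i\ell_{v_i}b_i$ coordinatewise.
\item $\Psi$ preserves inner products: $\ell_{v_i}^*\ell_{w_i}=\langle v_i,w_i\rangle\in A_i$, so $\Psi(v)^*\Psi(w)=\iota\bigl([\langle v_i,w_i\rangle]\bigr)=\iota\sigma(\langle v,w\rangle)$.
\end{itemize}
Hence $(\Psi,\iota\circ\sigma)$ is a representation of $H$ in $D$. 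The universal property of $\cT(H)$ then gives $\widetilde\sigma$ with $\widetilde\sigma(\ell_v)=\Psi(v)$ and $\widetilde\sigma|_A=\iota\sigma$.

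\emph{Intertwining of the expectations.} Both $E_\cU\circ\widetilde\sigma$ and $\sigma\circ E_H$ are bounded and linear. It therefore suffices to check equality on a set with dense span, namely the Wick-ordered monomials
\[
a_0\,\ell_{x_1}\cdots\ell_{x_n}\,\ell_{y_1}^*\cdots\ell_{y_m}^*\,a_1 .
\]
These span a dense *-subalgebra, because $\ell_y^*\ell_x=\langle y,x\rangle\in A$ lets every product be reduced to this form. On such a monomial, $E_\Omega$ (resp.\ $E_{H_i}$) vanishes unless $n=m=0$, since it compresses to the vacuum summand. Both sides thus vanish for $n+m>0$, and for $n=m=0$ both sides equal $\sigma(a_0a_1)$. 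Continuity finishes this part.

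\emph{Real structures.} $\cS(H,K)$ is generated by $A$ and the semicirculars $s_k=\ell_k+\ell_k^*$ for $k\in K$. Since $\Phi(K)\subseteq\prod_\cU K_i$, we may choose representatives with $k_i\in K_i$. Then
\[
\widetilde\sigma(s_k)=[(\ell_{k_i}+\ell_{k_i}^*)_i]\in\prod_\cU\cS(H_i,K_i),
\]
so $\widetilde\sigma$ maps $\cS(H,K)$ into $\prod_\cU\cS(H_i,K_i)$. Intertwining of the expectations follows from the previous step.

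\emph{Main obstacle.} The main care point is the first step: confirming that $\Psi$ is well defined on ultraproduct classes. This comes down to the isometry $\|\ell_{v}\|=\|v\|$ together with the compatibility of the module structures coordinatewise; the rest is routine.
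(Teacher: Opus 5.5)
Your proof is correct and follows essentially the same route as the paper. Both arguments obtain a representation $v\mapsto[(\ell_{v_i})_i]$ of $H$ in $\prod_{\cU}\cT(H_i)$, invoke the universal property of $\cT(H)$, check $E_{\cU}\circ\widetilde\sigma=\sigma\circ E_H$ on the dense span of the Wick-ordered monomials $\ell_\xi\ell_\eta^*$, and observe that $\widetilde\sigma(s_v)=[(s_{v_i})_i]$ with $v_i\in K_i$. The only cosmetic difference is that the paper first builds a representation of $\prod_{\cU}H_i$ and precomposes it with $(\Phi,\sigma)$, whereas you define $\Psi$ on $H$ directly.
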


\begin{proof}
See also \cite{gao-KE}. For each $i\in I$, let
$L_i\colon H_i\to\cT(H_i)$ be the canonical representation,
$v\mapsto\ell_v$. The map
$$
\prod_{\cU}H_i\longrightarrow\prod_{\cU}\cT(H_i),
\qquad
[(v_i)_i]\longmapsto[(\ell_{v_i})_i],
$$
together with the canonical *-homomorphism
$$
\prod_{\cU}A_i\longrightarrow\prod_{\cU}\cT(H_i),
$$
is a representation of $\prod_{\cU}H_i$. Precomposing this
representation with $(\Phi,\sigma)$ gives a representation of $H$
in $\prod_{\cU}\cT(H_i)$. The universal property of $\cT(H)$ now
gives $\widetilde{\sigma}$.

The identity
$$
E_{\cU}\circ\widetilde{\sigma}=\sigma\circ E_H
$$
is verified on the dense linear span of the elements
$\ell_\xi\ell_\eta^*$, where $\xi$ and $\eta$ range over tensor
powers of $H$. Finally, if $v\in K$ and
$\Phi(v)=[(v_i)_i]$ with $v_i\in K_i$, then
$$
\widetilde{\sigma}(s_v)=[(s_{v_i})_i].
$$
This proves the final assertion.
\end{proof}

\subsection{Shlyakhtenko's C*-algebra}\label{subsec shlyakhtenko}
Let $H$ be a correspondence over $A$.
A norm closed real subspace $K\subseteq H$ is called a \emph{real structure}  if 
\begin{enumerate}
\item 
$K$ generates $H$ as closed $A$-$A$ bimodule,
\item 
$K$ is closed under $v\mapsto a^*va$ for all $a\in A$.
\end{enumerate}
For the identity correspondence, one such real structure is the subspace of selfadjoint elements $A_{sa}$. If we assume that $A$ is unital (as we shall always do), this is the real structure generated by the unit. We call the pair $(H,K)$ a C*-correspondence with real structure. We sometimes write $(H,K,A)$ to indicate the underlying C*-algebra and remove ambiguity.

For covariant maps between C*-correspondences with real structures (possibly over different C*-algebras)
$(H_1,K_1)\to (H_2,K_2)$  we require that $K_1$ is mapped to $K_2$.

Let us now review Shlyakhtenko's C*-algebra of semicirculars associated to a C*-correspondence with real structure $(H,K)$
\cite{Shlyakhtenko1999Avalued}. 
The elements $s_v=\ell_v+\ell_v^*\in \cT(H)$	are called semicircular elements. 
Given $(H,K)$, we form the C*-algebra 
\[
\cS(H,K):=C^*(A,\{s_v:v\in K\})\subseteq \mathcal T(H),
\]
which we call the C*-algebra of $A$-valued semicirculars associated to $(H,K)$. We note that it comes equipped with the  vacuum expectation onto $A$. 

\begin{lemma}\label{lem:semicircular-jordan-generators}
If $\Sigma\subseteq K$ generates $K$ as a closed Jordan bimodule, then 
\[
\cS(H,K)=C^*(A,\{s_v:v\in \Sigma\}).
\]
\end{lemma}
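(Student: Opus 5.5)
The inclusion $C^*(A,\{s_v:v\in \Sigma\})\subseteq \cS(H,K)$ is immediate. For the reverse inclusion, let $D=C^*(A,\{s_v:v\in\Sigma\})$ and set
\[
K_0=\{v\in K: s_v\in D\}.
\]
It suffices to show that $K_0$ is a norm closed Jordan sub-bimodule of $K$ containing $\Sigma$. The hypothesis then forces $K_0=K$, and hence $\cS(H,K)\subseteq D$.

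I would check the required closure properties one at a time. Real linearity follows from $s_{\lambda v+\mu w}=\lambda s_v+\mu s_w$ for $\lambda,\mu\in\R$, since $v\mapsto \ell_v$ is linear and $\ell_v^*$ is conjugate linear. Norm closedness follows from $\|\ell_v\|\le \|v\|$ (creation operators are bounded by $\|\langle v,v\rangle\|^{1/2}$), so $v\mapsto s_v$ is continuous and $D$ is closed. I take the Jordan bimodule operations to be $v\mapsto a^*va$ for $a\in A$, together with symmetrized forms such as $v\mapsto ava+\ldots$ for selfadjoint $a$, or $v\mapsto av+va$ for $a=a^*$. By polarization these all reduce to the maps $v\mapsto a^*va$, since for example $av+va$ with $a$ selfadjoint equals $\tfrac12\big((1+a)v(1+a)-(1-a)v(1-a)\big)$. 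So the key step is to show that $v\in K_0$ implies $a^*va\in K_0$ for every $a\in A$.

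For this key step I need to express $s_{a^*va}$ in terms of $s_v$ and elements of $A$. On the Fock module one has $\ell_{a^*va}=a^*\ell_v a$ (left action on $\mathcal F(H)$ composed with creation), so
\[
s_{a^*va}=a^*\ell_v a+a^*\ell_v^*a .
\]
This is not obviously $a^*s_va$, because $a^*\ell_v^* a$ need not equal $(a^*\ell_v a)^*=a^*\ell_v^*a$; in fact these do agree, since $(a^*\ell_va)^*=a^*\ell_v^*a$. Hence $s_{a^*va}=a^*s_va\in D$, and this step is straightforward after all. The only genuine point of care is the identity $\ell_{avb}=a\ell_v b$ on $\mathcal F(H)$, which holds because $\ell_{avb}(\xi)=avb\otimes\xi=a\,\ell_v(b\xi)$ using the balanced tensor product. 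Closure of $K_0$ under the operations then follows, and the lemma is proved.

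I expect the main potential obstacle to be definitional: pinning down exactly what ``generates $K$ as a closed Jordan bimodule'' means, namely which operations beyond $v\mapsto a^*va$ are included. Any such operation should be real linear and built from the maps $v\mapsto avb+b^*va^*$ or $v\mapsto a^*va$. Each of these maps satisfies $s_{avb+b^*va^*}=a s_v b+b^*s_va^*$, because $\ell_{b^*va^*}^*=a\ell_v^*b$. So every such operation preserves $K_0$, and the argument goes through whichever convention is used.
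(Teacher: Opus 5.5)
Your proof is correct and is essentially the paper's argument. The paper just cites the identities $s_{rv+w}=rs_v+s_w$ and $s_{ava^*}=as_va^*$ together with continuity of $v\mapsto s_v$, and your set $K_0=\{v\in K: s_v\in D\}$ is the standard way of turning those identities into a closure argument. The sentence claiming that $a^*\ell_v^*a$ ``need not equal'' $(a^*\ell_va)^*$ contradicts itself and should be deleted, since $s_{a^*va}=\ell_{a^*va}+\ell_{a^*va}^*=a^*s_va$ directly.
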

\begin{proof}
This follows from the identities  $s_{rv+w}=rs_v+s_w$, $s_{ava^*}=as_va^*$, for $r\in \R$, $v,w\in K$, $a\in A$, and from the continuity of $v\mapsto s_v$. 	
\end{proof}	

\subsection{Indexed families of cb maps}
We sometimes work with pairs $(A;(\phi_i)_{i\in I})$ where $A$ is a unital
C*-algebra and $(\phi_i)_{i\in I}$ is an indexed family of completely bounded (cb) maps $\phi_i\in \mathrm{CB}(A)$. By a morphism $\sigma\colon (A;(\phi_i)_{i\in I})\to (B;(\psi_i)_{i\in I})$ between such objects  we understand a *-homomorphism $\sigma\colon A\to B$ that intertwines the cb maps: $\psi_i\sigma=\sigma\phi_i$ for all $i$ (notice that the index set is the same on both domain and codomain). 

By the ultrapower of $(A;(\phi_i)_{i\in I})$ we understand $(A^{\cU}; (\phi_{i}^{\cU})_{i\in I})$. We denote by $\Delta_A$ the diagonal embedding
$(A;(\phi_i)_i)\to (A^{\cU}; (\phi_{i}^{\cU})_{i\in I})$.

\begin{definition}\label{def:positively-existential}
Consider an embedding of C*-algebras with indexed families of cb maps:
$$
\theta\colon (A;(\phi_i)_{i\in I})\to (B;(\psi_i)_{i\in I}).
$$
We say that $\theta$ is
positively  existential if there exists an ultrafilter $\cU$ and a *-homomorphism
$$
\sigma\colon (B;(\psi_i)_{i\in I})\to (A^{\cU}; (\phi_i^{\cU})_{i\in I})
$$
such that $\sigma\theta=\Delta_A$. 

If $\sigma$ may be chosen to also  be an embedding, then we say that $\theta$ is  existential.

Following \cite[Section 6]{ozawa2025proximalityselflessnessgroupcalgebras}, we say that $\theta$ is completely positively existential 
if $\sigma$ may be chosen such that for every C*-algebra $C$ the induced *-homomorphism 
$\tilde\sigma\colon B\otimes C\to (A\otimes C)^\cU$ is well defined (where the tensor products are minimal). 
If $\sigma$ is additionally an embedding, we say that $\theta$ is completely existential.
\end{definition}

\section{Selfless C*-correspondences}\label{sec: correspondences}
Let $(H,K)$ be a $C^*$-correspondence over $A$ with a real structure. 
Denote by $\cS(H,K)$  Shlyakhtenko's  C*-algebra and by $E\colon \cS(H,K)\to A$  the vacuum conditional expectation.

For $v,w\in H$, set
$$
\kappa_{v,w}(a)=\langle v,aw\rangle,\qquad a\in A.
$$

\begin{definition}\label{def:selfless-correspondence} 
We call $(H,K)$ selfless if 
\begin{equation}\label{ssalpha}
    (A;\{\kappa_{v,w}:v,w\in H\})\to (\cS(H,K);\{\kappa_{v,w}\circ E:v,w\in H\})
\end{equation}
is a positively existential embedding of C*-algebras with indexed collections of cb maps. Equivalently, 
there exists  $\sigma\colon \cS(H,K)\to A^\cU$  whose restriction to $A$ agrees with the diagonal embedding $\Delta_A$ and such that
\begin{equation}\label{sssigma}
\kappa_{v,w}^\cU\circ \sigma = \Delta_A\circ \kappa_{v,w}\circ E \qquad \text{for all }v,w\in H.
\end{equation}	
If the embedding \eqref{ssalpha} is completely positively existential, then we call $(H,K)$ completely selfless.
\end{definition}

\begin{remark}
We prove all results for selflessness; the completely selfless versions are obtained by the same arguments.
\end{remark}

In the next three lemmas we clarify some points concerning this definition. We fix a C*-correspondence with real structure $(H,K)$,
set $C=\cS(H,K)$, and denote by $E\colon C\to A$ the vacuum expectation.

\begin{lemma}\label{lem:selfless-automatic-embedding}
If the left action of $A$ on $H$ is faithful,  and  a *-homomorphism  $\sigma\colon C\to A^{\cU}$ satisfies \eqref{sssigma}, then $\sigma$ is an embedding.
Thus, in this case the embedding \eqref{ssalpha} is existential.
\end{lemma}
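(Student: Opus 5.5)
The plan is to show that $\ker\sigma$ is an ideal $J$ of $C=\cS(H,K)$ that is killed by $E$, and then to use the fact that the vacuum expectation is GNS-faithful whenever the left action of $A$ on $H$ is faithful. Set $J=\ker\sigma$. If $x\in J$, then for all $y,z\in C$ we have $y^*x^*xz\in J$. Applying \eqref{sssigma} gives
\[
\Delta_A(\kappa_{v,w}(E(y^*x^*xz)))=\kappa^\cU_{v,w}(\sigma(y^*x^*xz))=0 .
\]
Since $\Delta_A$ is injective, $\langle v,E(y^*x^*xz)w\rangle=0$ for all $v,w\in H$. Taking $y=z$ and $v=w$, positivity yields $\langle v,E(z^*x^*xz)v\rangle=0$. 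By polarization in the Hilbert module, the positive element $E(z^*x^*xz)\in A$ therefore acts as zero on $H$. Faithfulness of the left action then gives $E(z^*x^*xz)=0$ for all $z\in C$.

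The second step is to show that $E(z^*x^*xz)=0$ for all $z\in C$ forces $x=0$, i.e., that $E$ induces a faithful GNS representation of $C$. Since $C\subseteq \cT(H)\subseteq B(\mathcal F(H))$ and $E(y)=P_0yP_0$, the vanishing says $xz\Omega$ has zero inner product with itself, where $\Omega$ is the unit of $A\subseteq\mathcal F(H)$. Thus $x$ annihilates $C\cdot(A\Omega)$, i.e., the closure of $C\Omega A$ in $\mathcal F(H)$. So the task reduces to showing that $\overline{C\Omega A}$ is all of $\mathcal F(H)$, or at least a submodule on which $C$ acts faithfully.

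I expect this density to be the main point. I would argue by induction on tensor degree, using $s_v=\ell_v+\ell_v^*$ for $v\in K$. Degree $0$ is $A$ itself. Assuming all elements $\xi\in H^{\otimes_A k}$ for $k<n$ lie in $\overline{C\Omega A}$, we get
\[
s_{v}\xi = v\otimes\xi+\ell_v^*\xi ,
\]
where $\ell_v^*\xi$ has degree $n-2$. Hence $v\otimes\xi\in\overline{C\Omega A}$. Together with left multiplication by $A$ and the fact that $K$ generates $H$ as a bimodule, the elementary tensors $av\otimes\xi$ (with the right action absorbed into $\xi$ and into $\Omega A$) span a dense subset of $H^{\otimes_A n}$. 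So $\overline{C\Omega A}=\mathcal F(H)$, and $x$ vanishes on a dense subspace, hence $x=0$.

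One subtlety to check is that the right $A$-action is handled correctly. Vectors of the form $v\otimes (a\xi)$ and $(va)\otimes\xi$ coincide, so the bimodule generation by $K$ suffices. I would also verify that $\langle xz\Omega, xz\Omega\rangle=E(z^*x^*xz)$ in the Hilbert $A$-module sense. Finally, since $\sigma$ restricts to $\Delta_A$ on $A$, existentiality of \eqref{ssalpha} follows at once.
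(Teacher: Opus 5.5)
Your proof is correct and follows the same route as the paper. Both arguments combine \eqref{sssigma} with faithfulness of the left action to show that $E$ vanishes on the relevant products with a kernel element, and then invoke GNS-faithfulness of the vacuum expectation. The one difference is that the paper simply asserts that GNS-faithfulness (it rests on its remark that the vacuum vector is cyclic, so $L^2(C,E)\cong\cF(H)$), whereas you prove it explicitly by induction on tensor degree.
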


\begin{proof}
Suppose that   $\sigma(c)=0$ for some  $c\in C$. Then 
$$
\Delta_A(\kappa_{v,w}(E(xcy)))=\kappa_{v,w}^\cU(\sigma(xcy))=0
$$
for all $x,y\in C$ and $v,w\in H$. By faithfulness of the left action of $A$ on $H$, this implies that $E(xcy)=0$ for all $x,y\in C$. Since $E$ has faithful GNS,  we get $c=0$. 
\end{proof}		

\begin{lemma}\label{lem:selfless-check-generators}
Let $\sigma\colon C\to A^{\cU}$ be a	*-homomorphism such that $\sigma|_A=\Delta_A$.	If $\Sigma\subseteq H$ generates $H$ as a closed bimodule, then $\sigma$ satisfies \eqref{sssigma} if and only if
$$
\kappa_{v,w}^\cU\circ \sigma = \Delta_A \circ \kappa_{v,w}\circ E\hbox{ for all }v,w\in \Sigma.
$$
Thus, to conclude  selflessness of $(H,K)$,  it is sufficient  to check that the embedding
$$
(A;\{\kappa_{v,w}:v,w\in \Sigma\})\to (C;\{\kappa_{v,w}\circ E:v,w\in \Sigma\})
$$
is positively existential.
\end{lemma}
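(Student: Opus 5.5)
The forward direction is immediate, since $\Sigma\subseteq H$. For the converse, assume
$$
\kappa_{v,w}^\cU\circ\sigma=\Delta_A\circ\kappa_{v,w}\circ E\quad\text{for all }v,w\in\Sigma.
$$
The plan is to propagate this identity from $\Sigma$ to all of $H$. I would use three facts: the maps $(v,w)\mapsto\kappa_{v,w}$ are sesquilinear, they are jointly norm continuous, and they transform well under the bimodule actions.

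First I would record how $\kappa$ behaves under the actions. For $a,b,x\in A$,
$$
\kappa_{vb,wb'}(x)=\langle vb,xwb'\rangle=b^*\langle v,xw\rangle b'=b^*\kappa_{v,w}(x)b',
$$
$$
\kappa_{av,a'w}(x)=\langle v,a^*xa'w\rangle=\kappa_{v,w}(a^*xa').
$$
Combining these,
$$
\kappa_{avb,a'wb'}(x)=b^*\kappa_{v,w}(a^*xa')b'.
$$
The same identities hold for the ultrapower maps $\kappa^\cU$. Here $\kappa_{v,w}^\cU$ acts coordinatewise and is well defined because $\|\kappa_{v,w}\|_{cb}\le\|v\|\|w\|$.

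Next, fix $v,w\in\Sigma$, $a,a',b,b'\in A$, and $c\in C$. Since $\sigma|_A=\Delta_A$ and $\sigma$ is multiplicative,
$$
\kappa^\cU_{avb,a'wb'}(\sigma(c))=\Delta_A(b)^*\,\kappa^\cU_{v,w}\bigl(\sigma(a^*ca')\bigr)\,\Delta_A(b').
$$
By hypothesis, the right-hand side equals
$$
\Delta_A\bigl(b^*\kappa_{v,w}(E(a^*ca'))b'\bigr).
$$
Because $E$ is $A$-bimodular, $E(a^*ca')=a^*E(c)a'$, so this is exactly $\Delta_A(\kappa_{avb,a'wb'}(E(c)))$. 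Thus \eqref{sssigma} holds for all pairs of elements in $A\Sigma A$.

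Then I would extend to all of $H$. By sesquilinearity of $(v,w)\mapsto\kappa_{v,w}$ and $(v,w)\mapsto\kappa^\cU_{v,w}$, the identity extends to finite sums, i.e.\ to the span of $A\Sigma A$. Continuity extends it to the closure, which is $H$. For continuity, note $\|\kappa_{v,w}-\kappa_{v',w'}\|\le\|v-v'\|\|w\|+\|v'\|\|w-w'\|$, with the same bound for the ultrapower maps. So both sides of \eqref{sssigma} are jointly norm continuous in $(v,w)$, pointwise in $c$.

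The last sentence of the lemma then follows. A morphism for the $\Sigma$-indexed families is a *-homomorphism $\sigma$ with $\sigma|_A=\Delta_A$ satisfying \eqref{sssigma} on $\Sigma$, and by the above such a $\sigma$ satisfies \eqref{sssigma} for all $v,w\in H$. The only point needing care is the bimodule computation, in particular using that $\sigma$ restricts to $\Delta_A$ on $A$ so that left multipliers can be moved inside $\sigma$; the rest is routine.
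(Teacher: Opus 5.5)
Your proof is correct and follows essentially the same route as the paper. The paper notes that the class of cb maps $\kappa$ with $\kappa^{\cU}\circ\sigma=\Delta_A\circ\kappa\circ E$ is closed under linear combinations, norm limits, pre-composition by left/right multiplications (using $\sigma|_A=\Delta_A$ and bimodularity of $E$), and post-composition by cb maps. It then applies the rules $\kappa_{b_1vb_2,w}=L_{b_2^*}\circ\kappa_{v,w}\circ L_{b_1^*}$ and $\kappa_{v,b_1wb_2}=R_{b_2}\circ\kappa_{v,w}\circ R_{b_1}$, together with additivity and norm continuity, which is exactly what your explicit computation with $\kappa_{avb,a'wb'}$ does.
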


\begin{proof}
The forward implication is immediate. For the converse, consider the set of all cb maps $\kappa\in CB(A)$ such that
$$
\kappa^\cU\circ \sigma=\Delta_A\circ \kappa\circ E.
$$
It is straightforward to check that this set is closed under linear combinations, operator-norm limits, pre-composition by the left and right multiplication maps
$$
L_a(x)=ax,\qquad R_a(x)=xa,
$$
and post-compositions by arbitrary cb maps on $A$. We now use that
\begin{align*}
\kappa_{v_1+v_2,w} &=\kappa_{v_1,w}+\kappa_{v_2,w}, \quad 
\kappa_{b_1vb_2,w}=L_{b_2^*}\circ \kappa_{v,w}\circ L_{b_1^*},\\
\kappa_{v,w_1+w_2}&=\kappa_{v,w_1}+\kappa_{v,w_2},\quad
\quad \kappa_{v,b_1wb_2}=R_{b_2}\circ \kappa_{v,w}\circ R_{b_1},
\end{align*}
to deduce that \eqref{sssigma} holds for $v,w$ ranging on a dense subset of $H$. Further, 
by  the norm continuity of $(v,w)\mapsto \kappa_{v,w}$, we obtain that \eqref{sssigma} holds for all $v,w\in H$.
\end{proof}

Consider the C*-correspondence over $C$ induced by $H$:
$$
H_C=L^2(C,E)\otimes_A H\otimes_A C.
$$
Endow $H_C$ with the real structure $K_C$ generated by the set $1\otimes K\otimes 1$. 
We note, for later use, that from the fact that $K$ generates $H$ as a closed bimodule,
we deduce that the vacuum vector is cyclic for the action of $C$ on $\cF(H)$. Hence, 
\begin{equation}\label{actsonFock}
L^2(C,E)\cong \cF(H)
\end{equation}
as $C$-$A$ C*-correspondences.

We have a covariant embedding
$(H,K,A)\to (H_C,K_C,C)$ where $\Phi\colon H\to H_C$ is defined as 
$$
\Phi(v)= 1\otimes v\otimes 1
$$
and $A$ is canonically embedded in $C$.

\begin{lemma}\label{lem:selfless-covariant-characterization} 
The pair $(H,K)$ is selfless if and only if the covariant embedding $(H,K,A)\to (H_C,K_C,C)$ is a positively  existential embedding of C*-correspondences with real structures. That is, for some ultrafilter $\cU$ there exists a covariant map 
$$
(\Psi,\sigma)\colon (H_C,K_C,C)\to (H^{\cU}, K^{\cU}, A^{\cU})
$$
such that 
$$
\Psi\Phi=\Delta_H, \quad \sigma|_{A}=\Delta_A.
$$
\end{lemma}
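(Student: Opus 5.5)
We prove the two implications separately. In both directions Lemma~\ref{lem:singly-generated-embedding} is the tool for building covariant maps, and Lemma~\ref{lem:selfless-check-generators} reduces the intertwining condition to generators.

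\emph{Selfless $\Rightarrow$ covariant extension.} Let $\sigma\colon C\to A^{\cU}$ satisfy $\sigma|_A=\Delta_A$ and \eqref{sssigma}. The bimodule $H_C$ is generated as a closed $C$-$C$ bimodule by the vectors $1\otimes v\otimes 1$ with $v\in K$; these are exactly the $\Phi(v)$. We want $\Psi$ with $\Psi(\Phi(v))=\Delta_H(v)$. By Lemma~\ref{lem:singly-generated-embedding}, applied with the $C^*$-algebra $C$ mapping to $A^{\cU}$ (the embedding hypothesis is only needed for injectivity, so for a covariant map the same Gram-matrix criterion suffices), it is enough to verify
\[
\sigma\bigl(\langle 1\otimes v\otimes 1,\; c\,(1\otimes w\otimes 1)\rangle\bigr)=\langle \Delta_H(v),\sigma(c)\Delta_H(w)\rangle
\]
for all $c\in C$ and $v,w\in K$.

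We compute the left-hand side. Since $\langle 1,c1\rangle_{L^2(C,E)}=E(c)$, we get
\[
\langle 1\otimes v\otimes 1, c\,(1\otimes w\otimes 1)\rangle=\langle v,E(c)w\rangle=\kappa_{v,w}(E(c)).
\]
Hence the left-hand side equals $\Delta_A(\kappa_{v,w}(E(c)))$. The right-hand side equals $\kappa^{\cU}_{v,w}(\sigma(c))$, because $\langle \Delta_H v, x\,\Delta_H w\rangle=\kappa_{v,w}^{\cU}(x)$ for $x\in A^{\cU}$. These agree precisely by \eqref{sssigma}. Since $\Psi$ sends the generators $\Phi(K)$ into $\Delta_H(K)\subseteq K^{\cU}$, and $K_C$ is generated from these by real combinations, maps $x\mapsto c^*xc$, and closure (all preserved by a covariant map), we get $\Psi(K_C)\subseteq K^{\cU}$.

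\emph{Covariant extension $\Rightarrow$ selfless.} Given $(\Psi,\sigma)$, the same computation read backwards gives
\[
\kappa^{\cU}_{v,w}\circ\sigma=\Delta_A\circ\kappa_{v,w}\circ E
\]
for $v,w\in K$, using $\Psi\Phi=\Delta_H$ and covariance of $\Psi$. Since $K$ generates $H$, Lemma~\ref{lem:selfless-check-generators} extends this to all $v,w\in H$, so $\sigma$ witnesses selflessness.

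The main obstacle is justifying the use of Lemma~\ref{lem:singly-generated-embedding} when $\sigma$ is merely a *-homomorphism rather than an embedding. We handle it directly: define $\Psi$ on finite sums $\sum c_i\Phi(v_i)d_i$ by $\sum\sigma(c_i)\Delta_H(v_i)\sigma(d_i)$. The Gram identity above shows this preserves inner products, hence is well defined and isometric in the seminorm sense, and therefore extends by continuity. We also need the identification $\langle 1,c1\rangle=E(c)$ in $L^2(C,E)$, which holds by construction of the GNS correspondence.
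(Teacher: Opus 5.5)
Your proposal is correct and follows essentially the same route as the paper: define $\Psi$ on sums $\sum_i c_i\Phi(v_i)d_i$ by $\sum_i \sigma(c_i)\Delta_H(v_i)\sigma(d_i)$, check via \eqref{sssigma} that it preserves inner products up to $\sigma$ (so it is well defined and contractive, rather than isometric, since $\sigma$ need not be injective), and for the converse run the same inner-product computation backwards. The only difference is cosmetic: the paper works with all $v,w\in H$ directly, so restricting to $v,w\in K$ and invoking Lemma~\ref{lem:selfless-check-generators} in the converse is harmless but unnecessary.
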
	

\begin{proof}
Suppose that $(H,K)$ is selfless and let $\sigma\colon C\to A^\cU$ be a *-homomorphism witnessing this. Define $\Psi\colon H_C\to H^\cU$ on elementary tensors $x\otimes v\otimes y\in C\otimes_A H\otimes_A C$ by
$$
\Psi(x\otimes v\otimes y)=\sigma(x)\Delta_H(v)\sigma(y)
$$
and extend it by linearity to their linear span. The calculation
\begin{align*}
\langle\Psi(x\otimes v\otimes y),\Psi(x'\otimes w\otimes y')\rangle 
&=\sigma(y)^* \langle \Delta_H(v),\sigma(x^*x')\Delta_H(w)\rangle \sigma(y')\\
&=\sigma(y)^*\Delta_A(\langle v, E(x^*x')w \rangle)\sigma(y')\\
&=\sigma(y^*\langle v, E(x^*x')w\rangle y')\\
&=\sigma(\langle x\otimes v\otimes y, x'\otimes w\otimes y' \rangle)
\end{align*}
shows both that $\Psi$ is well defined and that it extends to a linear map from $H_C$ to $H^{\cU}$. It is immediate from its  definition that $(\Psi,\sigma)$ is covariant, that it sends $K_C$ into $K^{\cU}$, and that $\Psi\circ \Phi=\Delta_H$. Thus, the embedding $(H,K,A)\to (H_C,K_C,C)$ is positively existential.

Conversely, suppose that the embedding $(H,K,A)\to (H_C,K_C,C)$ is positively existential. Let
$\sigma\colon C\to A^{\cU}$ and $\Psi\colon H_C\to H^{\cU}$
witness this. Then $\sigma|_A=\Delta_A$ and $\Psi\circ\Phi=\Delta_H$. For $c\in C$ and $v,w\in H$, the covariance of 
$(\Psi,\sigma)$ gives
\begin{align*}
\kappa_{v,w}^{\cU}(\sigma(c))
&=\langle \Delta_H(v),\sigma(c)\Delta_H(w)\rangle\\
&=\langle \Psi(1\otimes v\otimes 1),\Psi(c\otimes w\otimes 1)\rangle\\
&=\Delta_A\big(\langle 1\otimes v\otimes 1,c\otimes w\otimes 1\rangle\big)\\
&=\Delta_A(\langle v,E(c)w\rangle)\\
&=\Delta_A(\kappa_{v,w}(E(c))).
\end{align*}
Thus $\sigma$ satisfies \eqref{sssigma}, and so $(H,K)$ is selfless.
\end{proof}

\begin{theorem}
\label{thm:selfless-subcorrespondences-directed-unions}
If $(H_1,K_1)\subseteq (H_2,K_2)$ and $(H_2,K_2)$ is (completely) selfless, then $(H_1,K_1)$ is (completely) selfless. If $\{(H_i,K_i)\}_{i\in I}$ is an upward directed family of (completely) selfless $A$-$A$ C*-subcorrespondences  of $(H,K)$ with dense union in $(H,K)$, then $(H,K)$ is also selfless.	
\end{theorem}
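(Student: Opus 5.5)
The plan is to treat the two assertions separately. Both reduce to producing suitable $*$-homomorphisms into ultrapowers, using the functoriality of $\cS(\cdot,\cdot)$ that Lemma \ref{lem:toeplitz-exactness} provides.

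\emph{Subcorrespondences.} Suppose $(H_1,K_1)\subseteq (H_2,K_2)$, meaning that $H_1$ is a closed sub-bimodule of $H_2$, that $K_1\subseteq K_2$, and that the inner product on $H_1$ is the restricted one.
\begin{enumerate}
\item Apply Lemma \ref{lem:toeplitz-exactness} with a singleton index set, taking the inclusion $H_1\hookrightarrow H_2$ as covariant map with underlying map $\id_A$. This gives a $*$-homomorphism $\iota\colon \cS(H_1,K_1)\to \cS(H_2,K_2)$ with $\iota|_A=\id_A$, $\iota(s_v)=s_v$ for $v\in K_1$, and $E_2\circ\iota=E_1$.
\item Let $\sigma_2\colon\cS(H_2,K_2)\to A^{\cU}$ witness the selflessness of $(H_2,K_2)$, and set $\sigma_1=\sigma_2\circ\iota$.
\item Then $\sigma_1|_A=\Delta_A$. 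For $v,w\in H_1$ the map $\kappa_{v,w}$ is the same whether computed in $H_1$ or in $H_2$, so
\[
\kappa_{v,w}^{\cU}\sigma_1=\Delta_A\kappa_{v,w}E_2\iota=\Delta_A\kappa_{v,w}E_1 .
\]
\item In the complete case, $\iota\otimes\id_C$ is defined on minimal tensor products. Composing it with the induced map of $\sigma_2$ gives the required well-defined map $\cS(H_1,K_1)\otimes C\to (A\otimes C)^{\cU}$.
\end{enumerate}

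\emph{Directed unions.} By the first part, the $*$-homomorphisms $\iota_i\colon \cS(H_i,K_i)\to \cS(H,K)$ intertwine the vacuum expectations. I claim each $\iota_i$ is injective. The reason is that $\cF(H_i)\subseteq\cF(H)$ is a reducing sub-correspondence for the creation operators $\ell_v$ with $v\in H_i$, so $\iota_i$ is compression-compatible. Alternatively, one can argue directly: $E\circ\iota_i=E_i$, and $E_i$ has faithful GNS representation by \eqref{actsonFock}. Consequently $\bigcup_i\cS(H_i,K_i)$ is an upward directed union of C*-subalgebras of $\cS(H,K)$.

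I interpret "dense union in $(H,K)$" to include that $\bigcup_iK_i$ is dense in $K$. Then, by continuity of $v\mapsto s_v$ and Lemma \ref{lem:semicircular-jordan-generators}, the union $\bigcup_i\cS(H_i,K_i)$ is dense in $\cS(H,K)$.

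\emph{Gluing the witnesses.} For each $i$, pick $\sigma_i\colon\cS(H_i,K_i)\to A^{\cU_i}$ witnessing selflessness, and choose an ultrafilter $\mathcal V$ on $I$ containing all the tails $\{j: j\ge i\}$.
\begin{enumerate}
\item For $x\in\cS(H_{i_0},K_{i_0})$, define $\tau(x)=[(\sigma_j(x))_{j\ge i_0}]\in\prod_{\mathcal V}A^{\cU_j}$. This is well defined by the tail condition, and it is a $*$-homomorphism on the union.
\item $\tau$ is contractive, because each $\sigma_j$ is contractive for the norm of $\cS(H_j,K_j)$, which agrees with the norm of $\cS(H,K)$ by the injectivity above. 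Hence $\tau$ extends to all of $\cS(H,K)$.
\item On the union, the relations $\tau|_A=\Delta$ and $\kappa_{v,w}^{\mathcal V}\tau=\Delta\kappa_{v,w}E$ hold for $v,w\in\bigcup H_i$. They pass to the closure by continuity.
\item By Lemma \ref{lem:selfless-check-generators}, since $\bigcup_i H_i$ generates $H$, it suffices to verify the intertwining for $v,w$ in this union, which step 3 does.
\end{enumerate}

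\emph{Main obstacle.} The main obstacle is to identify $\prod_{\mathcal V}A^{\cU_j}$ with a genuine ultrapower $A^{\mathcal W}$ compatibly with the diagonal embeddings and with the maps $\kappa^{\cU}$, as Definition \ref{def:positively-existential} requires. I would first try the standard iterated-ultrapower construction. Take $\mathcal W$ on $\bigsqcup_j J_j$ (where $\cU_j$ lives on $J_j$) defined by
\[
S\in\mathcal W \iff \{j : S\cap J_j\in\cU_j\}\in\mathcal V .
\]
The natural map $A^{\mathcal W}\to\prod_{\mathcal V}A^{\cU_j}$ then needs to be checked; it is isometric by a direct computation of limits. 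If surjectivity fails, I would instead use the fallback sketched below.

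\emph{Fallback.} Positive existentiality only needs to be witnessed on finitely many elements, up to $\varepsilon$, by elements of $A$. So one can reindex over triples (finite set, finite set of $\kappa$'s, $\varepsilon$), pick representatives from the $\sigma_j$, and take an ultrafilter on that index set. The complete version follows the same way, since each step commutes with $\otimes C$.
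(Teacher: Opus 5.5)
Your proof is correct and follows essentially the paper's route. You realize $\cS(H_1,K_1)$ inside $\cS(H_2,K_2)$ compatibly with the vacuum expectations: the paper does this by restricting to the invariant subspace $\cF(H_1)\subseteq\cF(H_2)$, you via the universal property plus faithfulness of the vacuum GNS. You then restrict the witness, and for directed unions you glue the witnesses along an ultrafilter on $I$ containing the tails, as the paper does.

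Your ``main obstacle'' is harmless. The isometric map $A^{\mathcal W}\to\prod_{\mathcal V}A^{\cU_j}$ is also surjective: lift each $y_j\in A^{\cU_j}$ to a representative of sup-norm at most $\sup_j\|y_j\|+1$. So no fallback is needed. The paper simply picks a common ultrafilter and tacitly identifies $(A^{\cU})^{\mathcal V}$ with an ultrapower of $A$, which is the special case of your construction with all $\cU_j$ equal.
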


\begin{proof}
Suppose that $(H_1,K_1)\subseteq (H_2,K_2)$, with  $(H_2,K_2)$ selfless. The inclusion  $H_1\subseteq H_2$ induces a natural embedding $\cF(H_1)\hookrightarrow \cF(H_2)$. Identify $\cF(H_1)$ with a sub-C*-correspondence of $\cF(H_2)$ in this manner. Since $\cF(H_1)$ is invariant under left creation operators $\ell_v\in \cT(H_2)$ such that $v\in H_1$ and under their adjoints, the restriction $x\mapsto x|_{\cF(H_1)}$ yields the 
isomorphism 
$$
C^*(A, \{\ell_v\in \cT(H_2):v\in H_1\})\cong \cT(H_1).
$$
Identify $\cT(H_1)$ with a C*-subalgebra of $\cT(H_2)$ in this manner. Then,   since $K_1\subseteq K_2$,  we obtain 
$$
\cS(H_1,K_1) = C^*(A, \{s_v\in \cS(H_2,K_2):v\in K_1\}).
$$
Now if a *-homomorphism $\sigma\colon \cS(H_2,K_2)\to A^\cU$ fulfills \eqref{sssigma} for $(H_2,K_2)$, then its restriction to $\cS(H_1,K_1)$ fulfills \eqref{sssigma} for $(H_1,K_1)$.

Let $\{(H_i,K_i)\}_{i\in I}$ be an upward directed family with dense union in $(H,K)$. Identify $\cS(H_i,K_i)$ as C*-subalgebras of $\cS(H,K)$ as in the previous paragraph. Notice that they form an upward directed family of C*-subalgebras  with dense union. For each $i$ we have a *-homomorphism $\sigma_i\colon \cS(H_i,K_i)\to A^{\cU}$ fulfilling \eqref{sssigma} (where we pick a common ultrapower for all $\sigma_i$). Pick an ultrafilter $\mathcal V$ in $I$ containing the tail sets $\{i\geq i_0\}$. For $x\in \bigcup_i \cS(H_i,K_i)$, define
\[
\sigma(x)=[(\sigma_i(x))_{i}]\in (A^{\cU})^{\mathcal V},
\]
where $[(y_i)_i]$ on the right-hand side denotes the 
class of $(y_i)_i$ in the ultrapower. (Note that $\sigma_i(x)$
is defined for $\mathcal V$-almost all $i$.)
This is a well defined and contractive *-homomorphism on $\bigcup_i \cS(H_i,K_i)$, so it extends to $\sigma\colon \cS(H,K)\to (A^{\cU})^{\mathcal V}$. The verification that $\sigma$ satisfies \eqref{sssigma}, and thus witnesses selflessness of $(H,K)$ is left to the reader.
\end{proof}

In the next lemma we examine what happens when we iterate the construction of Shlyakhtenko's  C*-algebra of semicirculars. We first introduce the set-up for the lemma. 

Let $(H_1,K_1)$ and $(H_2,K_2)$ be $C^*$-correspondences over $A$ with real structures. Form 
$$
C_1=\cS_A(H_1,K_1)=C^*(A,\{s_v^{(1)}:v\in K_1\}),
$$
with vacuum expectation $E_1\colon C_1\to A$. Consider the $C_1$-$C_1$ C*-correspondence induced by $H_2$:
$$
\widetilde H_2=L^2(C_1,E_1)\otimes_A H_2\otimes_A C_1,
$$
endowed with the real structure $\widetilde K_2$ generated by $1\otimes K_2\otimes 1$. 
Now form
\begin{align*}
\cS_{C_1}(\widetilde H_2,\widetilde K_2) &=C^*(C_1,\{s_{1\otimes w\otimes 1}^{(2)}:w\in K_2\})\\
&=C^*(A,\{s_v^{(1)},s_{1\otimes w\otimes 1}^{(2)}:v\in K_1,\, w\in K_2\}),
\end{align*}
with vacuum expectation $\widetilde E_2\colon \cS_{C_1}(\widetilde H_2,\widetilde K_2)\to C_1$. Endow $\cS_{C_1}(\widetilde H_2,\widetilde K_2)$ with the expectation  $E_1\widetilde E_2$ onto $A$.

\begin{lemma}\label{lem:semicircular-iterated}
We have canonical isomorphisms
$$
\cS_{C_1}(\widetilde H_2,\widetilde K_2)
\cong \cS_A(H_1\oplus H_2,K_1\oplus K_2)\cong
\cS_A(H_1,K_1) *_A\cS_A(H_2,K_2)
$$
which intertwine the expectations onto $A$. Moreover, under the first isomorphism the semicircular
generators are mapped as follows:
\begin{align*}
s_v^{(1)} &\mapsto s_{v\oplus 0},\qquad v\in K_1,\\
s_{1\otimes w\otimes 1}^{(2)}&\mapsto s_{0\oplus w},\qquad w\in K_2.
\end{align*}
\end{lemma}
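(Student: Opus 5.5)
The second isomorphism, $\cS_A(H_1\oplus H_2,K_1\oplus K_2)\cong \cS_A(H_1,K_1)*_A\cS_A(H_2,K_2)$, is Shlyakhtenko's freeness result for semicircular systems over orthogonal summands \cite{Shlyakhtenko1999Avalued}. We recall it in the form we need. Inside $\cT(H_1\oplus H_2)$, the subalgebras $C^*(A,\{s_{v\oplus 0}\})$ and $C^*(A,\{s_{0\oplus w}\})$ are free with amalgamation over $A$ with respect to the vacuum expectation. Their restrictions to the invariant subspaces $\cF(H_1)$ and $\cF(H_2)$ identify them with $\cS_A(H_1,K_1)$ and $\cS_A(H_2,K_2)$, by the argument in the proof of Theorem \ref{thm:selfless-subcorrespondences-directed-unions}. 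Freeness is checked on words by the standard Fock space computation: an alternating product of centered elements applied to the vacuum produces a sum of tensors alternating between $H_1$ and $H_2$ factors, so it has zero vacuum component. Since the vacuum expectation has faithful GNS (the vacuum is cyclic by \eqref{actsonFock}), the generated C*-algebra is canonically the reduced amalgamated free product.

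For the first isomorphism, the plan is a unitary identification of Fock modules, followed by matching the generators. Set $\widetilde E = E_1\widetilde E_2$. We have $L^2(C_1,E_1)\cong\cF(H_1)$ by \eqref{actsonFock}. The GNS module of $\cS_{C_1}(\widetilde H_2,\widetilde K_2)$ over $A$ for $\widetilde E$ is
$$
\cF_{C_1}(\widetilde H_2)\otimes_{C_1}L^2(C_1,E_1)=\Big(C_1\oplus\bigoplus_{n\ge1}\widetilde H_2^{\otimes_{C_1}n}\Big)\otimes_{C_1}\cF(H_1).
$$
Since $\widetilde H_2^{\otimes_{C_1}n}\otimes_{C_1}\cF(H_1)\cong(\cF(H_1)\otimes_A H_2)^{\otimes_A n}\otimes_A\cF(H_1)$, this module is canonically isomorphic, as a right $A$-module, to $\cF(H_1\oplus H_2)$. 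This is the standard decomposition of the full Fock space by grouping $H_1$-letters between consecutive $H_2$-letters. Under this unitary $U$, we check that:
\begin{itemize}
\item[(a)] $A$ acts identically;
\item[(b)] $s^{(1)}_v$, acting through $C_1\subseteq\cT(\cF(H_1))$ on the leftmost $\cF(H_1)$ block, corresponds to $s_{v\oplus0}$;
\item[(c)] $\ell_{1\otimes w\otimes1}$ corresponds to $\ell_{0\oplus w}$.
\end{itemize}
For (c), the creation operator $\ell_{1\otimes w\otimes 1}$ prepends a new block $1\otimes w$ with an empty leading $H_1$-part, which is exactly left creation of $w$ in the second summand. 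Its adjoint matches by unitarity. Items (a)--(c) are checked on elementary tensors.

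Conjugation by $U$ then maps $\cS_{C_1}(\widetilde H_2,\widetilde K_2)$, which by the displayed identity before the lemma is generated by $A$, the $s^{(1)}_v$ and the $s^{(2)}_{1\otimes w\otimes1}$, onto $C^*(A,\{s_{v\oplus 0},s_{0\oplus w}\})$. By Lemma \ref{lem:semicircular-jordan-generators} this is $\cS_A(H_1\oplus H_2,K_1\oplus K_2)$, since $K_1\oplus 0$ and $0\oplus K_2$ generate $K_1\oplus K_2$. Because $U$ fixes the vacuum vector $1\otimes1$, it intertwines $\widetilde E$ with the vacuum expectation onto $A$. One subtlety remains. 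Both algebras are defined via faithful representations on their Fock modules, so we must know that the representation on $\cF_{C_1}(\widetilde H_2)\otimes_{C_1}\cF(H_1)$ is faithful. This holds because $C_1$ acts faithfully on $\cF(H_1)$, and the induced representation $T\mapsto T\otimes 1$ of $B(\cF_{C_1}(\widetilde H_2))$ is then faithful on $\cT(\widetilde H_2)$.

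The main obstacle is the bookkeeping in (b). Vectors in the block decomposition have the form $\xi_0\otimes w_1\otimes\xi_1\otimes\cdots$ with $\xi_i\in\cF(H_1)$. The operator $s^{(1)}_v$ acts only on $\xi_0$; in particular, when $\xi_0$ is the vacuum, the annihilation part gives zero rather than reaching past $w_1$. We need to confirm that this agrees with $\ell_{v\oplus0}^*$ on $\cF(H_1\oplus H_2)$, which kills tensors whose first letter lies in $H_2$. It does, and writing this out carefully is the bulk of the proof. The remaining claims are routine once $U$ is set up.
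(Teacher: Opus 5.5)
Your proposal is correct and follows essentially the paper's route: the first isomorphism comes from the unitary $\cF_{C_1}(\widetilde H_2)\otimes_{C_1}\cF_A(H_1)\cong\cF_A(H_1\oplus H_2)$, obtained by grouping $H_1$-letters between consecutive $H_2$-letters and then matching generators. For the second isomorphism, the paper cites Speicher's Toeplitz-level isomorphism $\cT_A(H_1\oplus H_2)\cong\cT_A(H_1)*_A\cT_A(H_2)$ and restricts it to the semicircular subalgebras, which rests on the same Fock-space freeness computation you sketch. Your observation that $T\mapsto T\otimes 1$ is injective because $C_1$ acts faithfully on $\cF_A(H_1)$ makes explicit a faithfulness point that the paper's proof leaves implicit.
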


\begin{proof}
Set $D=\cS_{C_1}(\widetilde H_2,\widetilde K_2)$.
Let us show that 
$$
L^2(D,E_1\widetilde E_2)
\cong \cF_A(H_1\oplus H_2)
$$
as  C*-correspondences over $A$, via an explicit unitary.

Recall that, as remarked in \eqref{actsonFock}, 
$$
L^2(C_1,E_1)\cong \cF_A(H_1),\quad L^2(D,\widetilde E_2)\cong \cF_{C_1}(\widetilde H_2),
$$
where the first isomorphism is an isomorphism of $C_1$-$A$ C*-correspondences, and the second one  of $D$-$C_1$ C*-correspondences. From the first isomorphism we get that
$$
\widetilde H_2\cong\cF_A(H_1)\otimes_A H_2 \otimes_A C_1
$$
as $C_1$-$C_1$ C*-correspondences. From the second one we get that
\begin{align*}
L^2(D,E_1\widetilde E_2)
 &\cong
\cF_{C_1}(\widetilde H_2)\otimes_{C_1}L^2(C_1,E_1)\\
&\cong\cF_{C_1}(\widetilde H_2)\otimes_{C_1}\cF_A(H_1)
\end{align*}
as $D$-$A$ C*-correspondences.

Since $\widetilde H_2=\cF_A(H_1)\otimes_A H_2 \otimes_A C_1$, the $n$-th tensor power $\widetilde H_2^{\otimes_{C_1}n}$ has the form
$$
\cF_A(H_1)\otimes_A H_2 \otimes_A \cF_A(H_1)\otimes_A H_2
\otimes_A \cdots \otimes_A \cF_A(H_1)\otimes_A H_2\otimes_A C_1,
$$
where $\cF_A(H_1)$ and $H_2$ appear $n$ times.  Using now that $L^2(D,E_1\widetilde E_2)\cong \cF_{C_1}(\widetilde H_2)\otimes \cF_A(H_1)$, 
we get that 
$$
L^2(D,E_1\widetilde E_2) \cong \bigoplus_{n=0}^\infty \cF_A(H_1)\otimes_A H_2 \otimes_A \cF_A(H_1)\otimes_A H_2 \cdots \otimes_A \cF_A(H_1),
$$
where $\cF_A(H_1)$ appears $n+1$ times and $H_2$ appears $n$ times in the $n$-th summand on the right-hand side. By expanding each tensor factor $\cF_A(H_1)$ into a direct sum of tensor powers of $H_1$, we can identify
the infinite direct sum on the right-hand side with $\cF_A(H_1\oplus H_2)$. The unitary 
$$
U\colon L^2(D,E_1\widetilde E_2)\to \cF_A(H_1\oplus H_2)
$$
is defined on elementary tensors by interpreting each $H_1$-word as a block of $H_1$-letters and each $H_2$-factor as an $H_2$-letter, thereby obtaining a tensor word in $\cF_A(H_1\oplus H_2)$.

Under $U$, the copy of $C_1$ in $D$ acts as the canonical copy of $\cS_A(H_1,K_1)$ in $\cS_A(H_1\oplus H_2,K_1\oplus K_2)$. Thus, for $v\in K_1$,
$$
U s_v^{(1)}U^*=s_{v\oplus 0}.
$$
Also, for $w\in K_2$, the creation operator associated to $1\otimes w\otimes 1\in \widetilde H_2$ is carried by $U$ to the creation operator associated to $0\oplus w\in H_1\oplus H_2$, and likewise for the adjoints of these operators. Hence
$$
Us_{1\otimes w\otimes 1}^{(2)}U^*=s_{0\oplus w}.
$$
Therefore
$$
UDU^*=\cS_A(H_1\oplus H_2,K_1\oplus K_2),
$$
and $E_1\widetilde E_2$ corresponds to the vacuum expectation. This gives the first isomorphism.

For the second isomorphism,  we recall that  by a result of Speicher,  there is  an expectation preserving isomorphism  
$$
\cT_A(H_1\oplus H_2)\cong \cT_A(H_1)*_A\cT_A(H_2).
$$
\cite[Theorem 2.4]{BrownDykemaShlyakhtenko2002}.
On left creation operators, this isomorphism makes the assignments:
\begin{align*}
\ell_{v\oplus 0} &\to \ell_v\in \cT_A(H_1),\qquad v\in H_1,\\
\ell_{0\oplus w} &\to \ell_w\in \cT_A(H_2),\qquad w\in H_2.
\end{align*}
Restricting to the C*-subalgebras generated by semicircular elements,
we obtain the required isomorphism. This completes the proof.
See \cite[Proposition 2.18]{Shlyakhtenko1999Avalued}
for the von Neumann  algebras version.
\end{proof}

\begin{theorem}\label{thm:l2-selfless-correspondence}
If $(H,K)$ is selfless, then so is $(\ell^2(H), \ell^2(K))$.
\end{theorem}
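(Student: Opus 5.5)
By Theorem \ref{thm:selfless-subcorrespondences-directed-unions}, $(\ell^2(H),\ell^2(K))$ is the closure of the upward directed union of the finite direct sums $(H^{\oplus n},K^{\oplus n})$, $n\in\N$. It therefore suffices to show that $(H^{\oplus n},K^{\oplus n})$ is selfless for each $n$. I will prove this by induction on $n$, using the iterated description of Lemma \ref{lem:semicircular-iterated} with $(H_1,K_1)=(H^{\oplus(n-1)},K^{\oplus(n-1)})$ and $(H_2,K_2)=(H,K)$. Then
\[
\cS_A(H^{\oplus n},K^{\oplus n})\cong \cS_{C_1}(\widetilde H_2,\widetilde K_2),\qquad C_1=\cS_A(H^{\oplus(n-1)},K^{\oplus(n-1)}),
\]
and the vacuum expectation corresponds to $E_1\widetilde E_2$.

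By the inductive hypothesis there is $\sigma_1\colon C_1\to A^{\cU}$ with $\sigma_1|_A=\Delta_A$ that intertwines $\kappa_{v,w}\circ E_1$ and $\kappa_{v,w}^{\cU}$ for $v,w\in H$. (Lemma \ref{lem:selfless-check-generators} lets us test only the generators $0\oplus\cdots\oplus v\oplus\cdots\oplus 0$, and those coordinate maps are the same $\kappa_{v,w}$.) Following the proof of Lemma \ref{lem:selfless-covariant-characterization}, $\sigma_1$ together with $\Delta_H$ induces a covariant map
\[
\Psi_1\colon \widetilde H_2=L^2(C_1,E_1)\otimes_A H\otimes_A C_1\to H^{\cU},\qquad x\otimes v\otimes y\mapsto \sigma_1(x)\Delta_H(v)\sigma_1(y),
\]
with underlying $*$-homomorphism $\sigma_1$. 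It is well defined because the inner-product computation there uses only \eqref{sssigma} for $\sigma_1$. It also sends $\widetilde K_2$ into $K^{\cU}$.

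By Toeplitz exactness (Lemma \ref{lem:toeplitz-exactness}), $\Psi_1$ induces
\[
\tau\colon \cS_{C_1}(\widetilde H_2,\widetilde K_2)\to \cS(H,K)^{\cU}
\]
extending $\sigma_1$ and intertwining $\widetilde E_2$ with $E^{\cU}$ composed after $\sigma_1$, i.e.\ $E^{\cU}\circ\tau=\sigma_1\circ\widetilde E_2$. Let $\sigma\colon\cS(H,K)\to A^{\cV}$ witness selflessness of $(H,K)$. The ultrapower map $\sigma^{\cU}\colon \cS(H,K)^{\cU}\to (A^{\cV})^{\cU}$ still satisfies \eqref{sssigma}, in the form $(\kappa^{\cV}_{v,w})^{\cU}\circ\sigma^{\cU}=\kappa_{v,w}^{\cU}\circ E^{\cU}$ after identifying $A^{\cU}\subseteq (A^{\cV})^{\cU}$ diagonally. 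Composing, $\rho=\sigma^{\cU}\circ\tau$ restricts on $A$ to the diagonal embedding. Moreover,
\[
\kappa_{v,w}\circ\rho=\kappa_{v,w}\circ E^{\cU}\circ\tau=\kappa_{v,w}\circ\sigma_1\circ\widetilde E_2=\kappa_{v,w}\circ E_1\circ\widetilde E_2
\]
for $v,w\in H$, where the ultrapower decorations are suppressed. Here the last equality uses the inductive hypothesis applied inside an iterated ultrapower, which is again an ultrapower $A^{\mathcal W}$ for a suitable ultrafilter. By Lemma \ref{lem:selfless-check-generators} with $\Sigma$ the coordinate copies of $H$, this shows that $(H^{\oplus n},K^{\oplus n})$ is selfless.

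The main obstacle is to make sure $\rho$ checks \eqref{sssigma} on \emph{all} coordinates of $H^{\oplus n}$, not only the last one. The cross terms $\kappa_{v\oplus0,0\oplus w}$ vanish on the right-hand side, but this vanishing must also be verified for $\rho$. To handle this, I will choose the generating set $\Sigma\subseteq H^{\oplus n}$ and use that $\sigma^{\cU}$ intertwines $\kappa_{v,w}$ only for vectors of $H$ itself. The step I expect to need most care is this: the identity $\kappa^{\cU}_{v,w}(\sigma^{\cU}(\tau(x)))=\kappa_{v,w}(E^{\cU}(\tau(x)))$ holds pointwise, and from it I must derive the statement for the first $n-1$ coordinates. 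Those coordinates live in $C_1\subseteq$ domain of $\tau$. There $\rho|_{C_1}=\sigma^{\cU}\circ\sigma_1$ with $\sigma_1(C_1)\subseteq A^{\cU}$, on which $\sigma^{\cU}$ acts diagonally, so the first $n-1$ coordinates are handled by the inductive hypothesis. The cross terms vanish because $\kappa_{v\oplus 0,0\oplus w}=0$ identically in $H^{\oplus n}$.
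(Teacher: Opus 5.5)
Your proof is correct and is essentially the paper's argument. Both build the covariant map out of the induced correspondence as in Lemma \ref{lem:selfless-covariant-characterization}, apply Toeplitz exactness, compose with $\sigma^{\cU}$, and verify the coefficient maps using Lemmas \ref{lem:semicircular-iterated} and \ref{lem:selfless-check-generators}. The only difference is organizational: you add one copy of $H$ at a time by induction, while the paper doubles $H\mapsto H\oplus H$ (using the same $\sigma$ on both sides) and then uses directed unions.

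The ``main obstacle'' in your last paragraph is not really an obstacle. Every coefficient map of $H^{\oplus n}$ is a sum of maps $\kappa_{v,w}$ with $v,w\in H$, and your displayed identity already holds on all of $\cS(H^{\oplus n},K^{\oplus n})$. So no separate argument by restricting to $C_1$ is needed, and such a restriction alone would not have sufficed anyway.
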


\begin{proof}
By  Theorem \ref{thm:selfless-subcorrespondences-directed-unions}, it suffices to show that if $(H,K)$ is selfless, then so is $(H\oplus H,K\oplus K)$.
Let us prove this.

As in the paragraphs above, we set $C=\cS(H,K)$, let $E\colon C\to A$
be the vacuum expectation, and denote by $(H_C,K_C)$ the C*-correspondence over $C$ induced by $(H,K)$. Since $(H,K)$ is selfless, we have, by Lemma \ref{lem:selfless-covariant-characterization},
 a covariant map $\Psi\colon H_C\to H^{\cU}$ with underlying *-homomorphism 
 $\sigma\colon C\to A^{\cU}$ satisfying that $\Psi\Phi=\Delta_H$ and $\sigma|_A=\Delta_A$. By Toeplitz exactness (Lemma \ref{lem:toeplitz-exactness}),  there is a *-homomorphism $\tilde\sigma\colon \cT(H_C)\to \cT(H)^{\cU}$
 satisfying that
 \begin{itemize}
 \item 
 $\tilde\sigma|_{C}=\sigma$,
\item 
$\tilde\sigma$ intertwines expectations, i.e., 
$$
E^{\cU}\circ \tilde\sigma=\sigma\circ \tilde E,
$$
where $\tilde E\colon \cT(H_C)\to C$ is the vacuum expectation.

\item 
if $\tilde v=\Phi(v)=1\otimes v \otimes 1$, then
$\tilde\sigma(\ell_{\tilde v})$ is the image of the constant sequence $(\ell_v)_i$ in $\cT(H)^{\cU}$, and consequently
$\tilde\sigma(s_{\tilde v})=\Delta_C(s_v)$.
 \end{itemize}

If $v\in K$ and  $c\in C$, and we set
 $\tilde v=c^*\otimes v\otimes c$, then
 $$
 \tilde\sigma(s_{\tilde v})=\sigma(c)^*\Delta_C(s_v)\sigma(c)\in C^{\cU}.
 $$
 Thus, the restriction of $\tilde\sigma$
to  
$$
D=C^*(C,\{s_{\tilde v}:\tilde v\in K_C\})\cong \cS(H\oplus H,K\oplus K)
$$
ranges in $C^{\cU}$. 
Composing with $\sigma^\cU$, we get $\sigma^\cU\tilde \sigma|_D\colon D\to (A^\cU)^\cU$ that restricts to the diagonal embedding on $A$. 

Let us check that $\sigma^{\cU}\tilde\sigma|_D$ intertwines the coefficient
maps of $H\oplus H$ as in \eqref{sssigma}. Let $v,w\in H$. By selflessness of $(H,K)$,
$$
\kappa_{v,w}^{\cU}\circ\sigma = \Delta_A\circ \kappa_{v,w}\circ E.
$$
Applying this identity in the ultrapower and using
$$
E^{\cU}\circ \tilde\sigma=\sigma\circ \tilde E,
$$
we get
\begin{align*}
(\kappa_{v,w}^{\cU})^{\cU}\circ(\sigma^{\cU}\tilde\sigma)
&=
\Delta_A^{\cU}\circ \kappa_{v,w}^{\cU}\circ E^{\cU}\circ\tilde\sigma\\
&=
\Delta_A^{\cU}\circ \kappa_{v,w}^{\cU}\circ \sigma\circ \tilde E\\
&=
\Delta_A^{\cU}\circ \Delta_A\circ \kappa_{v,w}\circ E\circ \tilde E.
\end{align*}
Identifying the iterated diagonal embedding with the diagonal embedding into
$(A^\cU)^\cU$, this gives
\[
(\kappa_{v,w}^{\cU})^{\cU}\circ(\sigma^{\cU}\tilde\sigma)
=
\Delta_A\circ \kappa_{v,w}\circ E\circ \tilde E.
\]
Under the identification
\[
D\cong \cS(H\oplus H,K\oplus K),
\]
from Lemma \ref{lem:semicircular-iterated},
the expectation onto $A$ is $E\circ \tilde E$. For the
subspaces $H\oplus 0$ and $0\oplus H$, the coefficient maps  are $\kappa_{v,w}$, with $v,w\in H$, while the cross coefficient maps are zero. Hence Lemma \ref{lem:selfless-check-generators}
shows that $\sigma^{\cU}\tilde\sigma|_D$ satisfies \eqref{sssigma}, witnessing
selflessness of $(H\oplus H,K\oplus K)$.
\end{proof}

Next we formulate a simpler, but much more restrictive, variation on selflessness.

\begin{definition}\label{def:toeplitz-selfless-correspondence}
Let $H$ be a C*-correspondence over $A$.
We call $H$ Toeplitz  selfless if the embedding 	
\begin{equation}\label{sstoeplitz}
(A;\{\kappa_{v,w}:v,w\in H\})\to (\cT(H);\{\kappa_{v,w}\circ E:v,w\in H\})
\end{equation}
is positively existential.
\end{definition}	

Note that if $H$ is Toeplitz selfless, then $(H,K)$ is selfless with $K=H$, but the latter property is in principle weaker. Lemmas \ref{lem:selfless-automatic-embedding} and \ref{lem:selfless-check-generators} remain valid in this case.
 Similarly to Lemma \ref{lem:selfless-covariant-characterization}, $H$ is Toeplitz selfless if and only if the covariant embedding
$$
H\to L^2(\cT(H),E)\otimes_A H\otimes_A \cT(H), \qquad v\mapsto 1\otimes v\otimes 1,
$$
is a positively existential embedding of C*-correspondences. 
The version of Theorem \ref{thm:selfless-subcorrespondences-directed-unions} for Toeplitz selflessness  is already implicitly proved in that same theorem. We also have the following:

\begin{lemma}
If $H$ is Toeplitz selfless, then the embedding \eqref{sstoeplitz}
is completely positively existential (Definition \ref{def:positively-existential}). Thus, the notions of Toeplitz selfless and completely Toeplitz selfless agree.
\end{lemma}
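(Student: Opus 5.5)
Given $\sigma\colon \cT(H)\to A^{\cU}$ witnessing Toeplitz selflessness, the plan is to upgrade it, for an arbitrary C*-algebra $D$, to a *-homomorphism $\cT(H)\otimes D\to (A\otimes D)^{\cU}$ extending $\sigma\otimes\id_D$. The natural route is to avoid the minimal tensor product on the domain by using the universal property of a Toeplitz--Pimsner algebra, which holds for representations into arbitrary targets. The key observation is that
$$
\cT(H)\otimes D\cong \cT_{A\otimes D}(H\otimes D),
$$
where $H\otimes D$ is the exterior tensor product correspondence over $A\otimes D$. This holds because $\cF(H\otimes D)=\cF(H)\otimes D$, and $\ell_{v\otimes d}=\ell_v\otimes d$ for $d$ in $D$ (working with $D$ unital, or unitizing first). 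So $\cT(H)\otimes D$ is generated by $A\otimes D$ together with creation operators whose relations are encoded by the $(A\otimes D)$-valued inner products. Consequently, to construct a map out of $\cT(H)\otimes D$ it is enough to produce a covariant representation of $H\otimes D$, with underlying map $A\otimes D\to (A\otimes D)^{\cU}$ the diagonal embedding.

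First, I use the Toeplitz-selfless characterisation: from $\sigma$ one gets a covariant map $(\Psi,\sigma)\colon L^2(\cT(H),E)\otimes_A H\otimes_A\cT(H)\to H^{\cU}$ with $\Psi(1\otimes v\otimes 1)=\Delta_H(v)$, as in Lemma~\ref{lem:selfless-covariant-characterization}. The useful data, however, are the elements $t_v:=\sigma(\ell_v)\in A^{\cU}$. They satisfy
$$
t_v^*\,\Delta_A(a)\,t_w=\sigma(\ell_v^*a\ell_w)=\Delta_A(\langle v,aw\rangle)
$$
because $\ell_v^*a\ell_w=\langle v,aw\rangle\in A$. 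Hence $v\mapsto t_v$ together with $\Delta_A$ is a representation of $H$ in $A^{\cU}$.

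Second, I define $v\otimes d\mapsto [(t_{v,i}\otimes d)_i]\in (A\otimes D)^{\cU}$ and $a\otimes d\mapsto [(a\otimes d)_i]$, where $t_v=[(t_{v,i})_i]$. The inner-product identity $\langle v\otimes d,(a\otimes d')(w\otimes d'')\rangle=\langle v,aw\rangle\otimes d^*d'd''$ holds only asymptotically at the level of representatives. Since it holds exactly in the ultrapower, I work with the canonical map $A^{\cU}\otimes D\to (A\otimes D)^{\cU}$, which is a well-defined *-homomorphism on the minimal tensor product (the left side embeds via the product map $\ell^\infty(A)\otimes D\to \ell^\infty(A\otimes D)$, which is injective and isometric for the minimal norm, then passes to the quotient). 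Composing $(t_v\otimes d)$ with this map gives a covariant representation of $H\otimes D$ over the *-homomorphism $A\otimes D\to A^{\cU}\otimes D\to(A\otimes D)^{\cU}$, which is the diagonal embedding. The universal property then yields $\tilde\sigma\colon \cT(H)\otimes D\to (A\otimes D)^{\cU}$ with $\tilde\sigma(\ell_v\otimes d)=\sigma(\ell_v)\otimes d$. This agrees with $\sigma\otimes\id$ on generators and is therefore the induced map required in Definition~\ref{def:positively-existential}. Intertwining of the cb maps is not part of the complete condition beyond what $\sigma$ already satisfies.

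I expect the main obstacle to be the identification $\cT(H)\otimes D\cong\cT(H\otimes D)$, specifically that the minimal tensor product norm coincides with the universal Toeplitz norm. For this I would cite the Fock-space picture. Alternatively, Katsura's gauge-invariant uniqueness theorem applies: the Fock representation of $\cT(H\otimes D)$ is faithful, and the gauge action on $\cT(H)$ tensored with the identity gives a gauge action on $\cT(H)\otimes D$ with faithful restriction to $A\otimes D$ and no Cuntz–Pimsner relations in the image, so the two algebras agree. Once this is in place, the remaining verifications are routine.
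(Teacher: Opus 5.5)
Your overall route is the paper's: pass to the exterior tensor product $H\boxtimes D$ over $A\otimes D$, build a representation of it in $(A\otimes D)^{\cU}$ from the elements $t_v=\sigma(\ell_v)$, apply the universal property of $\cT(H\boxtimes D)$, and identify $\cT(H\boxtimes D)\cong\cT(H)\otimes D$. The paper cites Brown--Ozawa, Lemma 4.6.24 for this identification, which is your Fock-space picture.

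\textbf{The gap.} You route the construction through a ``canonical map'' $A^{\cU}\otimes D\to(A\otimes D)^{\cU}$ on the minimal tensor product, and this is not valid for arbitrary $D$. Let $q\colon\ell^\infty(A)\to A^{\cU}$ be the quotient map and $c_{\cU}(A)=\ker q$. The product map $\ell^\infty(A)\otimes D\to\ell^\infty(A\otimes D)$ is indeed isometric. Its composite with the quotient onto $(A\otimes D)^{\cU}$ has kernel exactly $c_{\cU}(A)\otimes D$. To ``pass to the quotient'' $A^{\cU}\otimes D$, you need $\ker(q\otimes\id_D)=c_{\cU}(A)\otimes D$. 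That is, the sequence $0\to c_{\cU}(A)\otimes D\to\ell^\infty(A)\otimes D\to A^{\cU}\otimes D\to 0$ must be exact. This holds when $D$ is exact but can fail otherwise, by Kirchberg's characterization of exactness. It is exactly why the paper assumes $A_2$ exact in Theorem~\ref{thm:cp-map-permanence}(iv) to obtain $A_1^{\cU}\otimes A_2\to(A_1\otimes A_2)^{\cU}$. Arbitrary, possibly non-exact, $D$ is the entire content of this lemma, so the detour breaks the argument as written.

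\textbf{The repair.} It uses only what you already wrote down, and it is what the paper does: never form $A^{\cU}\otimes_{\min}D$.
\begin{enumerate}
\item Set $T_{v\otimes d}=[(t_{v,i}\otimes d)_i]\in(A\otimes D)^{\cU}$. This equals $\theta^{\cU}(t_v)\,\Delta_{A\otimes D}(1\otimes d)$, where $\theta(a)=a\otimes 1$.
\item The product $T_{v\otimes d}^*\,\Delta_{A\otimes D}(a\otimes d')\,T_{w\otimes d''}$ is represented by $(t_{v,i}^*at_{w,i}\otimes d^*d'd'')_i$. We have $\|(t_{v,i}^*at_{w,i}-\langle v,aw\rangle)\otimes d^*d'd''\|\le\|t_{v,i}^*at_{w,i}-\langle v,aw\rangle\|\,\|d^*d'd''\|$, which tends to $0$ along $\cU$. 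Hence the product equals $\Delta_{A\otimes D}(\langle v,aw\rangle\otimes d^*d'd'')$ exactly. An identity that holds asymptotically on representatives is already an exact identity in the ultrapower, so no further device is needed.
\item By linearity, $v\otimes d\mapsto T_{v\otimes d}$ preserves inner products on the algebraic tensor product $H\odot D$. It therefore extends to a representation of $H\boxtimes D$ over $\Delta_{A\otimes D}$.
\end{enumerate}
The paper phrases this as the fact that $\theta^{\cU}(A^{\cU})$ commutes with $\Delta_{A\otimes D}(1\otimes D)$. Only the algebraic tensor product $A^{\cU}\odot D$ is ever used, and on it the map is automatically a *-homomorphism.

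With this change the rest of your argument goes through, including identifying $\tilde\sigma$ with the induced map via its values on $\cT(H)\otimes 1$ and $1\otimes D$.
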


\begin{proof}
Let $H$ be Toeplitz selfless, and  $\sigma\colon \cT(H)\to A^{\cU}$ be such that $\sigma|_A=\Delta_A$.

Let $C$ be a unital C*-algebra. Let $\theta\colon A\to A\otimes C$ denote the first factor embedding $a\mapsto a\otimes 1$ into the minimal tensor product $A\otimes C$.  
Let $H\boxtimes C$ be the exterior tensor product of $H$ with the identity C*-correspondence $C$, over  $A\otimes C$  (see \cite[Chapter 4]{Lance1995}). 

We obtain a representation of $H\boxtimes C$ in $(A\otimes C)^{\cU}$, with underlying *-homomorphism the diagonal embedding $\Delta_{A\otimes C}$, by mapping  $v\otimes 1$ to $\sigma(\ell_v)\in A^{\cU}$ followed by the embedding  $\theta^{\cU}\colon A^{\cU} \to (A\otimes C)^{\cU}$:
\begin{equation}\label{repHboxtimesC}
H\boxtimes C\ni v\otimes 1\mapsto \theta^{\cU}\sigma(\ell_v)\in (A\otimes C)^{\cU}.
\end{equation}

Indeed, using that
$$
\Delta_A(\langle v,aw\rangle) =\sigma(\ell_v)^*\Delta_A(a)\sigma(\ell_w)
$$
for all $a\in A$, and that the range of $\theta^\cU$ commutes with $\Delta_{A\otimes C}(1\otimes C)$ in $(A\otimes C)^\cU$, we get
\begin{align*}
\Delta_{A\otimes C}(\langle (v\otimes 1),(a\otimes c)(w\otimes 1)\rangle) &=\Delta_{A\otimes C}(\langle v,aw\rangle\otimes c)\\
&=(\theta^{\cU}\sigma(\ell_v))^*\Delta_{A\otimes C}(a\otimes c)(\theta^{\cU}\sigma(\ell_w)).
\end{align*}
Since the set $\{v\otimes 1:v\in H\}$ generates $H\boxtimes C$
as an $A\otimes C$-$A\otimes C$ correspondence, the assignment
\eqref{repHboxtimesC} extends to a  representation of $H\boxtimes C$
in $(A\otimes C)^{\cU}$. 

From the  representation of $H\boxtimes C$
in $(A\otimes C)^{\cU}$ we obtain, by the universal property of the Toeplitz-Pimsner C*-algebra, a *-homomorphism
$$
\tilde\sigma\colon \cT(H\boxtimes C)\to (A\otimes C)^{\cU}
$$
such that $\tilde \sigma(\ell_{v\otimes 1})=\theta^{\cU}\sigma(\ell_v)$
and $\tilde\sigma|_{A\otimes C}=\Delta_{A\otimes C}$. On the other hand  
$$
\cT(H\boxtimes C)\cong \cT(H)\otimes C
$$
canonically via the assignment $\ell_{v\otimes c}\mapsto \ell_{v}\otimes c$ (\cite[Lemma 4.6.24]{brown2008textrm}). We thus obtain  $\tilde\sigma\colon \cT(H)\otimes C\to (A\otimes C)^{\cU}$ such that 
$\tilde\sigma(x\otimes 1)=\theta^{\cU}\sigma(x)$
and $\tilde\sigma|_{A\otimes C}=\Delta_{A\otimes C}$. This shows that the embedding $A\to \cT(H)$ is completely positively existential, in the sense of Definition \ref{def:positively-existential} and of \cite[Section 6]{ozawa2025proximalityselflessnessgroupcalgebras}.
\end{proof}

The proof of the following theorem can be adapted from the proof of Theorem \ref{thm:l2-selfless-correspondence}, and it is simpler:

\begin{theorem}\label{thm:l2-toeplitz-selfless-correspondence}
If $H$ is Toeplitz selfless, then so is $\ell^2(H)$.	
\end{theorem}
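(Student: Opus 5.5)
We follow the scheme of the proof of Theorem \ref{thm:l2-selfless-correspondence}. By the Toeplitz version of Theorem \ref{thm:selfless-subcorrespondences-directed-unions}, $\ell^2(H)$ is the closure of the directed union of the subcorrespondences $H^{\oplus n}$. By induction it therefore suffices to show the following: if $H_1$ and $H$ are Toeplitz selfless, then so is $H_1\oplus H$. In fact we only need the case where $H_1$ is a finite direct sum of copies of $H$.

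Set $C=\cT(H_1)$ with vacuum expectation $E_1$, and let $\widetilde H=L^2(C,E_1)\otimes_A H\otimes_A C$, now with no real structure. The Toeplitz analogue of Lemma \ref{lem:semicircular-iterated} identifies $C^*(C,\{\ell_{\tilde v}:\tilde v\in\widetilde H\})\subseteq\cT_C(\widetilde H)$ with $\cT_A(H_1\oplus H)$, with expectation $E_1\widetilde E$. The same Fock-space unitary $U$ works here: creation operators for $1\otimes w\otimes 1$ go to $\ell_{0\oplus w}$, and those for $H_1$ go to $\ell_{v\oplus 0}$. In fact $\cT_C(\widetilde H)$ itself equals this algebra, since $\ell_{x\otimes w\otimes y}=x\ell_{1\otimes w\otimes 1}y$ and $C$ is already generated.

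Next we apply the Toeplitz analogue of Lemma \ref{lem:selfless-covariant-characterization} to $H_1$. Toeplitz selflessness of $H_1$ gives $\sigma\colon C\to A^{\cU}$ with $\sigma|_A=\Delta_A$ intertwining the coefficient maps. This yields a covariant map $\Psi\colon L^2(C,E_1)\otimes_A H\otimes_A C\to H^{\cU}$, given by $x\otimes w\otimes y\mapsto\sigma(x)\Delta_H(w)\sigma(y)$. The inner product computation is the same as before, but here it uses the intertwining $\kappa^{\cU}_{v,w}\sigma=\Delta_A\kappa_{v,w}E_1$ for $v,w\in H$, \emph{not} $H_1$. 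This is where the reduction matters. With $H_1=H^{\oplus m}$, Lemma \ref{lem:selfless-check-generators} shows that Toeplitz selflessness of $H_1$ implies the intertwining for the coefficient maps $\kappa_{v,w}$ with $v,w$ in any single summand. Equivalently, we may run the induction so that every coefficient map of $H$ is also a coefficient map of $H_1$.

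Toeplitz exactness (Lemma \ref{lem:toeplitz-exactness}) then gives $\tilde\sigma\colon\cT(\widetilde H)\to\cT(H)^{\cU}$ with $\tilde\sigma|_C=\sigma$, $E^{\cU}\tilde\sigma=\sigma\widetilde E$ and $\tilde\sigma(\ell_{1\otimes w\otimes 1})=\Delta(\ell_w)$.

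The remaining obstacle, which is simpler than in the semicircular case, is that the range of $\tilde\sigma$ need not lie in $A^{\cU}$: it lies in $\cT(H)^{\cU}$. We fix this by composing with $\tau^{\cU}$, where $\tau\colon\cT(H)\to A^{\cU}$ witnesses Toeplitz selflessness of $H$. The composite $\tau^{\cU}\tilde\sigma\colon\cT(H_1\oplus H)\to(A^{\cU})^{\cU}$ restricts to the diagonal on $A$, because $\tau|_A=\Delta_A$. For the coefficient maps of $v,w\in H$ we compute
\[
\kappa^{\cU\cU}_{v,w}\tau^{\cU}\tilde\sigma=\Delta^{\cU}_A\kappa^{\cU}_{v,w}E^{\cU}\tilde\sigma=\Delta_A^{\cU}\kappa^{\cU}_{v,w}\sigma\widetilde E=\Delta_A\kappa_{v,w}E_1\widetilde E.
\]
The first equality uses Toeplitz selflessness of $H$, the second uses Toeplitz exactness, and the third uses the intertwining property of $\sigma$.

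For the coefficient maps of the generators of $H_1$ we use the intertwining property of $\sigma$ directly. For these generators, $\tau^{\cU}\tilde\sigma$ agrees on $C$ with $\tau^{\cU}\sigma$, and $\tau^{\cU}$ acts as the diagonal on $A^{\cU}$; this step is routine. The mixed coefficient maps between $H_1$ and $H$ vanish on both sides, since they are orthogonal summands and vacuum expectations kill the mixed terms. Finally, Lemma \ref{lem:selfless-check-generators}, applied to the generating set $(H_1\oplus0)\cup(0\oplus H)$, shows that $H_1\oplus H$ is Toeplitz selfless, which completes the induction.
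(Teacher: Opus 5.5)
\textbf{Overall.} Your scheme matches the paper's, with one variation and one mis-justified step. The paper's proof is the adaptation of Theorem~\ref{thm:l2-selfless-correspondence}. It doubles $H\to H\oplus H$ with $C=\cT(H)$. Toeplitz exactness then lands $\tilde\sigma$ directly in $\cT(H)^{\cU}=C^{\cU}$, which is why it is simpler than the semicircular case, and one composes with $\sigma^{\cU}$. Since both summands are copies of $H$, a single computation like your display handles every coefficient map.

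\textbf{Your variation.} You add one copy at a time, $H^{\oplus m}\oplus H$, using two witnesses: $\sigma$ for $H_1=H^{\oplus m}$ and $\tau$ for $H$. That is fine in principle.

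\textbf{The gap.} The step you call routine is not justified as written. For $v,w\in H_1$, the identity
$(\kappa^{\cU}_{v,w})^{\cU}\circ\tau^{\cU}\tilde\sigma=\Delta_A\circ\kappa_{v,w}\circ E_1\widetilde E$
must hold on all of $\cT_C(\widetilde H)\cong\cT(H_1\oplus H)$, not only on $C$. Knowing that $\tau^{\cU}\tilde\sigma$ agrees with $\tau^{\cU}\sigma$ on $C$ says nothing about an element such as $x=\ell_{1\otimes w'\otimes 1}$, $w'\in H$. There $\tilde\sigma(x)$ is the constant sequence $\ell_{w'}$, and the requirement becomes $\kappa^{\cU}_{v,w}(\tau(\ell_{w'}))=0$. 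A witness for $H$ alone gives no control over this for $v,w\in H_1$. (Also, $\tau^{\cU}|_{A^{\cU}}$ is $(\Delta_A)^{\cU}$, not the diagonal embedding of $A^{\cU}$; it only agrees with it on $A$, which is harmless on $C$.)

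\textbf{What is needed.} You need $\tau$ to intertwine the coefficient maps of $H_1$: $\kappa^{\cU}_{v,w}\circ\tau=\Delta_A\circ\kappa_{v,w}\circ E_H$ for $v,w\in H_1$. Your three-step computation then goes through verbatim. The last equality uses Toeplitz selflessness of $H_1$ for $\sigma$.

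\textbf{The fix.} This holds precisely because $H_1=H^{\oplus m}$. For $v=(v_k)_k$ and $w=(w_k)_k$ one has $\kappa_{v,w}=\sum_k\kappa_{v_k,w_k}$, a sum of coefficient maps of $H$. Equivalently, every coefficient map of $H^{\oplus(m+1)}$ is a sum of maps $\kappa_{v,w}$ with $v,w\in H$. So your displayed computation for $v,w\in H$ already covers everything by linearity. The mixed coefficient maps are identically zero as maps on $A$, so they need no argument about vacuum expectations.

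\textbf{What to take away.} Your argument uses compatibility of coefficient maps between $H_1$ and $H$ in two places. One is for $\sigma$ when constructing $\Psi$, which you addressed. The other is for $\tau$ when checking the $H_1\oplus0$ coefficients on the whole algebra, which you missed. This is also why the general statement ``$H_1$ and $H$ Toeplitz selfless implies $H_1\oplus H$ Toeplitz selfless'' is not what the argument proves. With this addition your proof is complete. Taking $H_1=H$ (doubling, then passing to subcorrespondences and directed unions), as the paper does, makes the second use automatic, since then you may take $\tau=\sigma$.
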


\section{Weak selflessness and weak containment}\label{sec: weak containment}
\begin{definition}\label{def:weak-selfless-correspondence}
We call $(H,K)$ (completely) weakly selfless if the embedding
$$
A\to \cS(\ell^2(H),\ell^2(K))
$$
is a (completely) positively existential embedding of C*-algebras. 
In other words, if there exists a *-homomorphism $\sigma\colon \cS(\ell^2(H), \ell^2(K))\to A^{\cU}$ such that $\sigma|_A=\Delta_A$.

We call $H$  weakly Toeplitz selfless if the embedding
$$
A\to \cT(\ell^2(H))
$$
is a positively existential embedding of C*-algebras. 
\end{definition}

By Theorems \ref{thm:l2-selfless-correspondence} and \ref{thm:l2-toeplitz-selfless-correspondence}, (Toeplitz) selflessness implies weak (Toeplitz) selflessness.

\begin{definition}\label{def:weak-containment-correspondences}
Let $(H_1,K_1)$ and $(H_2,K_2)$ be C*-correspondences over $A$ with real structures. We say that 
$(H_1,K_1)$ is weakly contained in $(H_2,K_2)$, denoted by  $(H_1,K_1)\precsim (H_2,K_2)$, if 
for every pair of finite sets $F \subset A$, $G \subset K_1$, and $\epsilon > 0$, there exists $n \in \N$ and an assignment
\[
G\ni v \mapsto (\xi_1(v), \cdots, \xi_n(v))\in K_2^n
\]
such that
\begin{equation*}
	\Big\|\langle v, aw\rangle - \sum_{k = 1}^n \langle\xi_k(v), a\xi_k(w)\rangle\Big\| < \epsilon
\end{equation*}
for all $v,w \in G$ and $a \in F$. If $K_1=H_1$ and $K_2=H_2$ then we say that $H_1$
is weakly contained in $H_2$, and write $H_1\precsim H_2$.
\end{definition}

It is not hard to show that weak containment can be reformulated as follows:

\begin{lemma}\label{lem:weak-containment-ultrapower}
We have $(H_1,K_1)\precsim (H_2,K_2)$ if and only if for some ultrafilter $\cU$ there is a covariant embedding 
$$
(\Phi,\Delta_A)\colon (\ell^2(H_1),\ell^2(K_1),A)\to ((\ell^2(H_2))^{\cU},(\ell^2(K_2))^{\cU}, A^{\cU}).
$$
\end{lemma}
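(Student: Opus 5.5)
Both implications come from the usual passage between finitary approximations and ultrapowers, carried out on the generating sets $\{\delta_i\otimes v: v\in K_1\}$. I will write elements of $\ell^2(H)$ as sequences $(v_i)_{i\in\N}$ with $\sum_i\langle v_i,v_i\rangle$ norm convergent, and $\ell^2(K)$ for the sequences with entries in $K$. In this notation $\ell^2(K)$ is a real structure of $\ell^2(H)$, generated as a closed Jordan bimodule by the vectors $\delta_i\otimes v$ with $v\in K$.

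\emph{Forward direction.} Assume $(H_1,K_1)\precsim(H_2,K_2)$. Index the ultrafilter by triples $\lambda=(F,G,\epsilon)$, ordered in the natural way, and let $\cU$ contain all the tails. For each $\lambda$, weak containment gives vectors $\xi^\lambda_1(v),\dots,\xi^\lambda_{n_\lambda}(v)\in K_2$ for $v\in G$. I first need a covariant map $\Phi_0\colon(H_1,K_1)\to((\ell^2(H_2))^{\cU},(\ell^2(K_2))^\cU)$. Define
\[
\Phi_0(v)=[(\xi^\lambda_1(v),\dots,\xi^\lambda_{n_\lambda}(v),0,\dots)_\lambda],
\]
taking the $\lambda$-coordinate to be $0$ when $v\notin G$. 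I only need this for $v$ in a generating family, say all of $K_1$, listing $G$ over finite subsets of $K_1$. Then for $v,w\in K_1$ and $a\in A$ the inner products $\langle\Phi_0(v),\Delta_A(a)\Phi_0(w)\rangle$ converge along $\cU$ to $\langle v,aw\rangle$, so Lemma~\ref{lem:singly-generated-embedding} yields a covariant embedding $H_1\to(\ell^2(H_2))^\cU$ with underlying map $\Delta_A$. The images of elements of $K_1$ lie in $(\ell^2(K_2))^\cU$. Since $K_1$ is the closed real span of elements $a^*va$, and $(\ell^2(K_2))^\cU$ is stable under these operations and closed, the whole of $K_1$ maps into $(\ell^2(K_2))^\cU$.

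\emph{Passing to $\ell^2(H_1)$.} To extend to $\ell^2(H_1)$, I want orthogonal copies of $\Phi_0$. I will shift supports: choose the $j$-th copy $\Phi_j$ to place its vectors in the coordinate block $[jN_\lambda,(j+1)N_\lambda)$ for a suitable $N_\lambda\ge n_\lambda$. To keep each coordinate an $\ell^2$-sequence, I restrict at stage $\lambda$ to $j\le|G|$ and set $\Phi_j=0$ beyond that. The resulting images are pairwise orthogonal in the ultrapower and each reproduces the inner products of $H_1$. Lemma~\ref{lem:singly-generated-embedding}, applied to the generators $\delta_j\otimes v$, then gives $\Phi$ on $\ell^2(H_1)$, with $\ell^2(K_1)$ mapped into $(\ell^2(K_2))^\cU$ by the same closure argument.

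\emph{Converse direction.} Suppose such a $\Phi$ exists, and fix finite sets $F\subset A$, $G\subset K_1$ and $\epsilon>0$. Write $\Phi(\delta_0\otimes v)=[(\eta^{(i)}(v))_i]$ with representatives $\eta^{(i)}(v)\in\ell^2(K_2)$. This choice of representatives needs care: an element of $(\ell^2(K_2))^\cU$ is by definition a class of sequences in $\ell^2(K_2)$, so such representatives exist. Covariance gives
\[
\lim_{i\to\cU}\langle\eta^{(i)}(v),a\,\eta^{(i)}(w)\rangle=\langle v,aw\rangle .
\]
Pick a single $i$ for which this holds within $\epsilon/2$ for all $v,w\in G$ and $a\in F$. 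Then truncate each $\eta^{(i)}(v)=(\xi_k(v))_k$ to finitely many coordinates $k\le n$, with $n$ chosen so that the tails $\sum_{k>n}\langle\xi_k,\xi_k\rangle$ are small. By Cauchy--Schwarz for Hilbert modules the truncation error is below $\epsilon/2$. The entries $\xi_k(v)$ lie in $K_2$, which is exactly what weak containment requires.

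The main obstacle is the bookkeeping in the forward direction. The delicate points are arranging that the infinitely many orthogonal copies remain genuine $\ell^2$-sequences at each stage, and confirming that the real structure is preserved under the generating-set extension. I will handle the first by the truncation $j\le|G|$ described above, and the second by the stability of $(\ell^2(K_2))^\cU$ under $a^*(\cdot)a$, real linear combinations and limits.
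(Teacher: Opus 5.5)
Your argument is correct, and it is the routine passage between finite approximations and ultrapowers that the paper has in mind; the paper states the lemma without proof, calling it ``not hard to show''. One detail needs tidying in the forward direction: set the $\lambda$-th representative to $0$ unless $1\in F$ and $\epsilon\le 1$, so that $\|\sum_k\langle\xi_k^\lambda(v),\xi_k^\lambda(v)\rangle\|\le\|\langle v,v\rangle\|+1$ and the families are uniformly bounded. Also, the truncation $j\le|G|$ is harmless but unnecessary, because Lemma~\ref{lem:singly-generated-embedding} applied to the generators $\delta_j\otimes v$ already yields $\Phi$ on all of $\ell^2(H_1)$.
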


In the following theorem $\std$ denotes the  real structure 
on $A$, viewed as the identity C*-correspondence, generated by $1$. Thus,  $\std=A_{sa}$.

\begin{theorem}\label{thm:asainner-correspondence}
If $(H,K)$ is weakly selfless, then $(H,K)\precsim (A,\std)$.
\end{theorem}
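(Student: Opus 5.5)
Weak selflessness gives a *-homomorphism $\sigma\colon \cS(\ell^2(H),\ell^2(K))\to A^{\cU}$ with $\sigma|_A=\Delta_A$. The idea is to take the semicircular generators $s_{v_k}$, where $v_k$ denotes $v$ placed in the $k$-th coordinate of $\ell^2(H)$, and use their images $h_k(v):=\sigma(s_{v_k})\in A^{\cU}_{sa}$ as the vectors $\xi_k(v)$ in $(A,\std)^{\cU}$. (Indeed, $A^{\cU}_{sa}=\std^{\cU}$.) The required inner products will come from averaging over $k$, which is a law-of-large-numbers effect for free semicirculars.

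\textbf{Step 1: a Fock space computation.} Fix $v,w\in K$ and $a\in A$, and consider
\[
T_n:=\frac1n\sum_{k=1}^n s_{v_k}\,a\,s_{w_k}\in \cS(\ell^2(H),\ell^2(K)).
\]
Expanding $s=\ell+\ell^*$ gives four families of terms.
\begin{itemize}
\item The annihilation--creation terms give $\ell_{v_k}^*a\ell_{w_k}=\langle v,aw\rangle$ for each $k$, so their average is exactly $\kappa_{v,w}(a)$.
\item For the terms $\ell_{v_k}a\ell_{w_k}$, the summands for distinct $k$ have orthogonal ranges and mutually orthogonal "initial" structure. A direct computation gives $\|\sum_k \ell_{v_k}a\ell_{w_k}\|\le C\sqrt n$, so after dividing by $n$ this family tends to $0$.
\item The same bound handles $\ell^*a\ell^*$ (by taking adjoints) and the mixed terms $\ell a\ell^*$. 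For the mixed terms, write $X=\sum_k \ell_{v_k}a\ell_{w_k}^*$. Then $X^*X=\sum_{j,k}\ell_{w_j}a^*\ell_{v_j}^*\ell_{v_k}a\ell_{w_k}^*$, and since $\ell_{v_j}^*\ell_{v_k}=\delta_{jk}\langle v,v\rangle$, we get $\|X\|^2\le \|\sum_k \ell_{w_k}b\ell_{w_k}^*\|$ with $b\ge0$. This last operator is a sum of operators acting on orthogonal subspaces (the ranges of the $\ell_{w_k}$), so its norm is bounded independently of $n$.
\end{itemize}
Hence $\|T_n-\kappa_{v,w}(a)\|=O(n^{-1/2})$. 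This is the key estimate, and the main obstacle is to verify these norm bounds cleanly on the Fock module, e.g.\ via the row/column operator inequalities for isometry-like families with orthogonal ranges.

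\textbf{Step 2: apply $\sigma$ and pass to the ultrapower.} Since $\sigma$ is contractive,
\[
\Big\|\frac1n\sum_{k=1}^n h_k(v)\,\Delta_A(a)\,h_k(w)-\Delta_A(\kappa_{v,w}(a))\Big\|\le Cn^{-1/2}.
\]
Given finite sets $F\subset A$, $G\subset K$ and $\epsilon>0$, choose $n$ with $Cn^{-1/2}<\epsilon$ uniformly over the finitely many triples. Set $\xi_k(v)=n^{-1/2}h_k(v)$, which lies in $A^{\cU}_{sa}$. This gives
\[
\Big\|\Delta_A\langle v,aw\rangle-\sum_{k=1}^n\langle \xi_k(v),\Delta_A(a)\xi_k(w)\rangle\Big\|<\epsilon .
\]

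\textbf{Step 3: descend from the ultrapower.} Lift each $h_k(v)$ to a representing sequence of selfadjoint elements of $A$. Because there are finitely many strict inequalities, they hold along $\cU$-many indices, so some index yields selfadjoint elements $\xi_k(v)\in A_{sa}$ satisfying Definition~\ref{def:weak-containment-correspondences} with $(H_2,K_2)=(A,\std)$. This proves $(H,K)\precsim(A,\std)$.
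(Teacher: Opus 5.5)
Your proof is correct, and its skeleton matches the paper's. You average $\frac1n\sum_k s_{v_k}as_{w_k}$ over orthogonal copies, show it lies within $O(n^{-1/2})\|a\|$ of $\kappa_{v,w}(a)$, push this through $\sigma$, lift to selfadjoint representatives, pick a coordinate, and set $\xi_k(v)=n^{-1/2}h_k(v)$.

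The difference is in how the key estimate is obtained. The paper identifies $\cS(\ell^2(H),\ell^2(K))$ with $\cS(H,K)^{*_A\infty}$ (Lemma \ref{lem:semicircular-iterated}). It then notes that the centered elements $s_{v_k}as_{w_k}-\kappa_{v,w}(a)$ lie in freely independent copies and invokes the operator-valued Voiculescu inequality.

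You instead expand $s=\ell+\ell^*$ and bound the four families directly on the Fock module, using only $\ell_{v_j}^*\ell_{v_k}=\delta_{jk}\langle v,v\rangle$:
\begin{itemize}
\item The $\ell^*a\ell$ terms give $\kappa_{v,w}(a)$ exactly.
\item The terms $\ell_{v_k}a\ell_{w_k}$ have pairwise orthogonal ranges. Their sum is $\ell_\eta$ with $\langle\eta,\eta\rangle=n\langle aw,\langle v,v\rangle aw\rangle$, so its norm is at most $\sqrt n\,\|v\|\|a\|\|w\|$; the $\ell^*a\ell^*$ terms follow by adjoints.
\item The mixed terms reduce to the mutually orthogonal positive operators $\ell_{w_k}b\ell_{w_k}^*$, whose sum has norm at most $\|b\|\|w\|^2$.
\end{itemize}
So the bounds you flagged as the ``main obstacle'' go through, giving the explicit estimate $(2n^{-1/2}+n^{-1})\|v\|\|w\|\|a\|$.

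Your route is self-contained and avoids both the free product identification and the amalgamated free inequality. The paper's route is shorter given that machinery. It also uses a tool the paper needs anyway for elements that are not quadratic in creation operators, e.g.\ in Theorem \ref{thm:no-phantom} and Theorem \ref{thm:prob-dixmier-averaging}.
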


\begin{proof}
Let $C=\cS(\ell^2(H),\ell^2(K))$, and denote by $E\colon C\to A$ the vacuum expectation. Fix finite sets
$$
\{v_1,\ldots,v_n\}\subseteq K, \qquad F\subseteq A,
$$
and let $\epsilon>0$. Set $\kappa_{ij}(a)=\langle v_i,av_j\rangle$.

For each $i$ and each $k\in\N$, let $v_i^{(k)}\in\ell^2(K)$ be the copy of $v_i$ in the $k$-th summand of $\ell^2(H)$, and let $s_i^{(k)}=s_{v_i^{(k)}}\in C$. After identifying $C\cong \cS(H,K)^{*_A\infty}$ (Lemma \ref{lem:semicircular-iterated}), the elements
$$
s_i^{(k)}as_j^{(k)}-\kappa_{ij}(a)
$$
are centered and belong to freely independent copies of
$\cS(H,K)$ in $C$. By Voiculescu's inequality \cite[Proposition 7.4]{junge2005picuineq},
$$
\Big\|
\frac{1}{N}\sum_{k=1}^N \Big(s_i^{(k)}as_j^{(k)}-\kappa_{ij}(a)\Big)
\Big\| = O\Big(N^{-1/2}\Big)\|a\|.
$$
Choose $N$ sufficiently large such that
$$
\Big\|
\frac{1}{N}\sum_{k=1}^N s_i^{(k)}as_j^{(k)} -\kappa_{ij}(a)
\Big\| \leq \frac{\epsilon}{2}\|a\| 
$$
for all $a\in F$ and all $1\leq i,j\leq n$.

Since $(H,K)$ is weakly selfless, there exists a *-homomorphism
$\sigma\colon C\to A^{\cU}$ whose restriction to $A$ is the diagonal embedding. We thus have 
$$
\Big\|
\frac{1}{N}\sum_{k=1}^N \sigma\Big(s_i^{(k)}\Big)a\sigma\Big(s_j^{(k)}\Big)  \kappa_{ij}(a)
\Big\| \leq \frac{\epsilon}{2}\|a\|
$$
for all $a\in F$ and all $1\leq i,j\leq n$. Choose selfadjoint lifts of the elements
$\sigma(s_i^{(k)})$ in the product algebra, and then pass to a single coordinate  to obtain selfadjoint
elements $h_i^{(k)}\in A$, for  $i=1,\ldots,n$ and $k=1,\ldots,N$,
such that
$$
\Big\|
\frac{1}{N}\sum_{k=1}^N h_i^{(k)}ah_j^{(k)} -\kappa_{ij}(a)
\Big\|
\leq  \epsilon\|a\|
$$
for all $a\in F$ and all $1\leq i,j\leq n$.
Thus, the assignment
$$
v_i\longmapsto
\Big(
\frac{1}{\sqrt{N}}h_i^{(1)},\ldots,\frac{1}{\sqrt{N}}h_i^{(N)}
\Big)
$$
yields $(H,K)\precsim(A,\std)$.
\end{proof}

Given a C*-correspondence $H$ over $A$, let us call $\alpha\in CB(A)$ an  extended coefficient of $H$ if it can be obtained from the coefficient maps 
$\{\kappa_{v,w}:v,w\in H\}$ by linear combinations, operator norm limits, pre-compositions by 
left/right multiplication operators $x\stackrel{M_{a,b}}{\mapsto} axb$, and post-compositions by cb maps.
That is, $\alpha$ is an operator norm limit of maps of the form 
\[
\sum_i \beta_i \circ \kappa_{v_i,w_i}\circ M_{a_i,b_i}
\]
for $\beta_i\in CB(A)$, $v_i,w_i\in H$, $a_i,b_i\in A$.

\begin{theorem}\label{thm:weak-containment-permanence-selfless}
Suppose that $(H_1,K_1)\precsim (H_2,K_2)$.
\begin{enumerate}[(i)]
\item If $(H_2,K_2)$ is weakly selfless, then so is $(H_1,K_1)$.
\item If $(H_2,K_2)$ is selfless, and  moreover for each $v,w\in H_1$,   $\kappa_{v,w}$ belongs to the set of  extended coefficients of $H_2$,  then $(H_1,K_1)$ is selfless.	
\end{enumerate}
\end{theorem}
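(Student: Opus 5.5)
The plan is to push the weak containment through Toeplitz exactness to obtain a *-homomorphism between the semicircular algebras. By Lemma \ref{lem:weak-containment-ultrapower}, $(H_1,K_1)\precsim(H_2,K_2)$ gives, for some ultrafilter $\cU$, a covariant map
$$
(\Phi,\Delta_A)\colon (\ell^2(H_1),\ell^2(K_1),A)\to (\ell^2(H_2)^{\cU},\ell^2(K_2)^{\cU},A^{\cU}).
$$
Lemma \ref{lem:toeplitz-exactness} (the real structure clause, with constant families $H_i=\ell^2(H_2)$) then provides
$$
\widetilde\sigma\colon \cS(\ell^2(H_1),\ell^2(K_1))\to \cS(\ell^2(H_2),\ell^2(K_2))^{\cU},
$$
which extends $\Delta_A$ and intertwines the vacuum expectations.

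\emph{Part (i).} Weak selflessness of $(H_2,K_2)$ gives $\sigma_2\colon \cS(\ell^2(H_2),\ell^2(K_2))\to A^{\mathcal V}$ with $\sigma_2|_A=\Delta_A$. Then $\sigma_2^{\cU}\circ\widetilde\sigma$ maps $\cS(\ell^2(H_1),\ell^2(K_1))$ into $(A^{\mathcal V})^{\cU}$, and it restricts to the diagonal embedding on $A$. This is realized in a single ultrapower over a product ultrafilter, as in the proof of Theorem \ref{thm:l2-selfless-correspondence}. Part (i) follows.

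\emph{Part (ii).} By Theorem \ref{thm:l2-selfless-correspondence}, $(\ell^2(H_2),\ell^2(K_2))$ is selfless. Let $\sigma_2$ witness this, so that \eqref{sssigma} holds for every coefficient $\kappa_{v,w}$ of $\ell^2(H_2)$, and set $\sigma=\sigma_2^{\cU}\widetilde\sigma$. Since $(H_1,K_1)\subseteq(\ell^2(H_1),\ell^2(K_1))$ as the first summand, it suffices to check \eqref{sssigma} for $\sigma$ restricted to $\cS(H_1,K_1)$, for $v,w\in H_1$ (cf.\ Theorem \ref{thm:selfless-subcorrespondences-directed-unions}). The key observation, as in the proof of Lemma \ref{lem:selfless-check-generators}, is that for a fixed $\sigma'$ the set of cb maps $\kappa$ with $\kappa^{\cU}\sigma'=\Delta_A\kappa E$ is closed under linear combinations, norm limits, pre-composition by $M_{a,b}$, and post-composition by cb maps. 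Applied to $\sigma_2$, this set contains all coefficients of $\ell^2(H_2)$, hence all extended coefficients of $H_2$, and hence every $\kappa=\kappa_{v,w}$ with $v,w\in H_1$, by hypothesis.

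The remaining computation uses that $\widetilde\sigma$ intertwines expectations:
$$
(\kappa^{\cU})^{\cU}\circ\sigma_2^{\cU}\circ\widetilde\sigma=\Delta_A^{\cU}\circ\kappa^{\cU}\circ E_2^{\cU}\circ\widetilde\sigma=\Delta_A^{\cU}\circ\kappa^{\cU}\circ\Delta_A\circ E_1=\Delta\circ\kappa\circ E_1,
$$
where the last equality uses $\kappa^{\cU}\Delta_A=\Delta_A\kappa$. This is exactly \eqref{sssigma} for $H_1$.

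The main obstacle is the bookkeeping of iterated ultrapowers and the compatibility $\kappa^{\cU}\circ\Delta_A=\Delta_A\circ\kappa$ for cb maps, which is routine. A subtler point is that the coefficients $\kappa_{v,w}$ of $H_1$ need not be coefficients of $H_2$ themselves; weak containment only approximates them by sums. This is why the extended-coefficient hypothesis is needed, and why the argument uses the closure property rather than trying to pass the approximate identities through the ultrapower.
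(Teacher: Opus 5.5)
Your proof is correct and follows essentially the same route as the paper: you pass the weak containment through Lemma \ref{lem:weak-containment-ultrapower} and Toeplitz exactness to get an expectation-intertwining map into $\cS(\ell^2(H_2),\ell^2(K_2))^{\cU}$, compose with the ultrapower of the map witnessing (weak) selflessness of $(\ell^2(H_2),\ell^2(K_2))$, and use the closure properties of the set of intertwined cb maps to reach the extended coefficients. The differences are cosmetic: you apply the closure argument to $\sigma_2$ before composing rather than to the composite map, and in (i) you work with $\ell^2(H_1)$ explicitly, which is slightly more careful than the paper's wording.
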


\begin{proof}
By Theorem \ref{thm:l2-selfless-correspondence} (or the definition of weakly selfless), we may replace
$(H_2,K_2)$ by $(\ell^2(H_2),\ell^2(K_2))$  and rename it as $(H_2,K_2)$.
    
(i) From the covariant embedding $(H_1,K_1,A)\to (H_2^\cU,K_2^\cU,A^\cU)$  we get by Toeplitz exactness
(Lemma \ref{lem:toeplitz-exactness}) a *-homomorphism  $\cT(H_1)\to \cT(H_2)^\cU$ intertwining the  expectations
$E_1$ and $E_2^{\cU}$, where $E_1$ and $E_2$ denote the vacuum expectations on $\cT(H_1)$ and $\cT(H_2)$, respectively. Restricting this *-homomorphism to the C*-algebra of semicircular elements, we get a *-homomorphism 
$$
\sigma_1\colon \cS(H_1,K_1)\to (\cS(H_2,K_2))^\cU.
$$
(The analogous statement for the  
enveloping von Neumann of semicircular elements  is \cite[Theorem C]{IoanaTan2024}.)

By weak selflessness of $(H_2,K_2)$, we have a *-homomorphism $\sigma_2\colon \cS(H_2,K_2)\to A^{\cU}$ agreeing with the diagonal inclusion on $A$. Then, the *-homomorphism   
$$
\sigma_2^\cU\sigma_1\colon \cS(H_1,K_1)\to (A^{\cU})^{\cU}
$$
agrees with the diagonal inclusion on $A$. Thus, $(H_1,K_1)$ is weakly selfless.

(ii) Let $v,w\in H_2$. Since $\sigma_1$ intertwines the expectations $E_1$ and $E_2^{\cU}$,  we have that
$$
\Delta_A \circ \kappa_{v,w}\circ E_1 =\kappa_{v,w}^\cU\circ E_{2}^{\cU}\sigma_1. 
$$
By selflessness of $(H_2,K_2)$, $\sigma_2$ intertwines $\kappa_{v,w}\circ E_{2}$ and $\kappa_{v,w}^{\cU}$.  We deduce that  $\sigma=\sigma_2^\cU\sigma_1$ intertwines $\kappa_{v,w}\circ E_{1}$ and  $(\kappa_{v,w}^{\cU})^{\cU}$. That is, 
\[
(\kappa_{v,w}^{\cU})^{\cU}\circ \sigma = \Delta_A\circ \kappa_{v,w}\circ E_1
\]
for all $v,w\in H_2$. Since the set of cb maps $\alpha\in CB(A)$ that satisfy
\[
(\alpha^{\cU})^{\cU}\sigma=\Delta_A\circ \alpha \circ E_1
\]
is closed under linear combinations, operator norm limits, pre-compositions by left/right multiplication operators, and post-compositions (see the proof of Lemma \ref{lem:selfless-check-generators}),
we deduce that the above equality holds for any extended coefficient of $H_2$.
In particular, it holds for the coefficient maps of $H_1$.  The desired result follows.
\end{proof}	

We state a more general version:
\begin{theorem}\label{thm:ultraproduct-permanence-selfless}
Suppose that we have a covariant embedding $(H,K)\to \prod_{\cU} (H_i,K_i)$ with underlying *-homomorphism $\Delta_A$.
\begin{enumerate}[(i)]
\item
If $(H_i,K_i)$ is weakly selfless for all $i$, then  $(H,K)$ is weakly selfless.
\item
If $(H_i,K_i)$ is selfless for all $i$ and the coefficient maps of $(H,K)$ belong to the extended coefficients set of $\cU$-almost all $(H_i,K_i)$. Then $(H,K)$
is selfless.
\end{enumerate}
\end{theorem}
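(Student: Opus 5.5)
The plan is to mimic the proof of Theorem \ref{thm:weak-containment-permanence-selfless}, replacing the single ultrapower $H_2^{\cU}$ by the ultraproduct $\prod_{\cU}H_i$.

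First I will pass to the $\ell^2$ versions. From the covariant embedding $(H,K)\to\prod_\cU(H_i,K_i)$ with underlying map $\Delta_A$, I get coordinatewise a covariant embedding $(\ell^2(H),\ell^2(K))\to\prod_\cU(\ell^2(H_i),\ell^2(K_i))$: a finitely supported vector $(v^{(k)})_k$ with $v^{(k)}=[(v^{(k)}_i)_i]$ is sent to $[((v^{(k)}_i)_k)_i]$. The inner products are preserved since the sums are finite, and the map then extends by continuity. In case (ii) I may also replace $(H_i,K_i)$ by its $\ell^2$ version using Theorem \ref{thm:l2-selfless-correspondence}. Extended coefficients of $H_i$ remain extended coefficients of $\ell^2(H_i)$, since $\ell^2(H_i)$ contains $H_i$ as its first summand. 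In case (ii) I only need to work with $H$ itself, so it suffices to have the embedding $H\to\prod_\cU H_i$.

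Next I apply Toeplitz exactness (Lemma \ref{lem:toeplitz-exactness}). This gives a *-homomorphism $\sigma_1\colon\cS(\ell^2(H),\ell^2(K))\to\prod_\cU\cS(\ell^2(H_i),\ell^2(K_i))$. It extends $\Delta_A$ and intertwines the vacuum expectations with $E_\cU=[(E_i)_i]$; in case (ii) I use the version without $\ell^2$ on $H$.

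By hypothesis each $(H_i,K_i)$ admits $\sigma_{2,i}\colon\cS(\ell^2(H_i),\ell^2(K_i))\to A^{\cV_i}$ restricting to $\Delta_A$. The main obstacle is that these use possibly different ultrafilters $\cV_i$. I would handle this by choosing a single ultrafilter $\cV$ that works for all $i$. For example, a countably incomplete ultrafilter on a large enough index set works, since by saturation an ultrapower over a good ultrafilter receives all the needed maps. Alternatively, I would note that only finitely many elements and approximate conditions matter at each stage, replace $A^{\cV_i}$ by approximate *-homomorphisms into $A$, and reassemble them over a product ultrafilter. With a common $\cV$, the map $\prod_\cU\sigma_{2,i}\colon\prod_\cU\cS(\cdots)\to(A^\cV)^\cU$ is a well-defined *-homomorphism, because all the $\sigma_{2,i}$ are contractive. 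Composing, $\sigma=(\prod_\cU\sigma_{2,i})\circ\sigma_1$ restricts on $A$ to the diagonal embedding into $(A^\cV)^\cU$, which proves (i).

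For (ii), I first check that the coefficient maps of $H_i$ are intertwined. Let $\alpha$ be a map that is an extended coefficient of $H_i$ for $\cU$-almost all $i$, such as $\kappa_{v,w}$ for $v,w\in H$. Selflessness of $H_i$ gives $\alpha^{\cV}\sigma_{2,i}=\Delta_A\alpha E_i$ on those $i$, by the closure argument from Lemma \ref{lem:selfless-check-generators}. Taking the ultraproduct and using $E_\cU\sigma_1=\Delta_A\circ E$, I get
\[
(\alpha^\cV)^\cU\circ\sigma=\Delta_A\circ\alpha\circ E.
\]
Applying this to $\alpha=\kappa_{v,w}$ establishes \eqref{sssigma}, so $(H,K)$ is selfless.

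A secondary subtlety in (ii) is uniformity. The identity for each $i$ is exact, so passing to the ultraproduct only requires the maps $\alpha^\cV$ to act coordinatewise boundedly, and this holds because $\alpha$ is cb.
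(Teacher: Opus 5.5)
Your proposal is correct and follows the paper's proof: Toeplitz exactness gives $\sigma_1$ into $\prod_\cU$ of the semicircular algebras, you compose with the ultraproduct of the maps $\sigma_{2,i}$, and you propagate the intertwining identity through extended coefficients. On two points you are more careful than the paper, which runs part (i) on $\cS(H,K)$ instead of the $\ell^2$-versions and simply declares a single ultrapower $A^{\mathcal V}$ for all $i$. In fact the common-ultrafilter issue can be avoided altogether: if $\mathcal V_i$ lives on $J_i$, then $\prod_\cU A^{\mathcal V_i}$ is itself an ultrapower of $A$ over the sum ultrafilter on $\bigsqcup_i J_i$, and this identification matches both the diagonal embedding and the maps $\alpha^{\mathcal V_i}$.
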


\begin{remark} Assuming separability of $A$, we could state this theorem differently by asking that $\lim_{\cU} (H_i,K_i)=(H,K)$ in the Fell topology.
\end{remark}

\begin{proof}
From the covariant embedding $(H,K)\to \prod_{\cU} (H_i,K_i)$ and Toeplitz exactness (Lemma \ref{lem:toeplitz-exactness})
we get a *-homomorphism
$$
\sigma_1\colon \cS(H,K)\to \prod_\cU \cS(H_i,K_i)
$$
intertwining expectations. From weak selflessness of $(H_i,K_i)$ for all $i$, we have for each $i$  a *-homomorphism $\sigma_{i,2}\colon \cS(H_i,K_i)\to A^{\mathcal V}$ (we pick a single ultrapower for all $i$). From these embeddings we get $[(\sigma_{i,2})_i]\colon \prod_\cU \cS(H_i,K_i)\to (A^{\mathcal V})^\cU$. (We use $[\cdot]$ to denote the quotient map onto the ultrapower.) Set 
$\tilde\sigma=[(\sigma_{i,2})_i]\circ\sigma_1$.
If  $\alpha\in CB(A)$ is an extended coefficient map for $\cU$-almost all $H_i$, then  
\[
(\alpha^{\mathcal V})^{\cU}\circ \tilde \sigma=\Delta_A\circ \alpha \circ E,
\]
where $E\colon \cS(H,K)\to A$ is the vacuum expectation.
By assumption, this holds for all the coefficient maps of $H$. 
Thus, $(H,K)$ is selfless.
\end{proof}

We have a version of the previous theorem for 
Toeplitz selflessness. The proof is identical to that of Theorem \ref{thm:weak-containment-permanence-selfless}, using the Toeplitz part of Lemma \ref{lem:toeplitz-exactness} and replacing semicircular algebras by Toeplitz–Pimsner algebras.

\begin{theorem}\label{thm:weak-containment-permanence-toeplitz}
Suppose that $H_1\precsim H_2$.
\begin{enumerate}[(i)]
\item If $H_2$ is weakly Toeplitz selfless, then so is $H_1$.
\item If $H_2$ is Toeplitz selfless, and  moreover for each $v,w\in H_1$,   $\kappa_{v,w}$ belongs to the extended coefficients
set of $H_2$,  then $H_1$ is Toeplitz selfless.	
\end{enumerate}
\end{theorem}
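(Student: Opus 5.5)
We follow the proof of Theorem \ref{thm:weak-containment-permanence-selfless} almost verbatim, with Shlyakhtenko's algebras replaced by Toeplitz--Pimsner algebras and real structures dropped. For (i), weak Toeplitz selflessness is defined through $\ell^2(H_2)$. So we first upgrade the hypothesis $H_1\precsim H_2$ to a covariant embedding
$$
(\Phi,\Delta_A)\colon \ell^2(H_1)\to (\ell^2(H_2))^{\cU}.
$$
This comes from Lemma \ref{lem:weak-containment-ultrapower} applied with $K_i=H_i$. Since $\ell^2(\ell^2(H_2))\cong\ell^2(H_2)$, the same lemma also shows $\ell^2(H_1)\precsim\ell^2(H_2)$ if needed.

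The Toeplitz part of Lemma \ref{lem:toeplitz-exactness} then gives a *-homomorphism
$$
\sigma_1\colon \cT(\ell^2(H_1))\to \cT(\ell^2(H_2))^{\cU}
$$
extending $\Delta_A$ and intertwining the vacuum expectations. Weak Toeplitz selflessness of $H_2$ gives $\sigma_2\colon\cT(\ell^2(H_2))\to A^{\mathcal V}$ with $\sigma_2|_A=\Delta_A$. The composition $\sigma_2^{\cU}\sigma_1\colon\cT(\ell^2(H_1))\to (A^{\mathcal V})^{\cU}$ restricts to the diagonal embedding on $A$, after identifying the iterated ultrapower with a single ultrapower over the product ultrafilter. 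This proves (i).

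For (ii), the analogue of Theorem \ref{thm:l2-selfless-correspondence} is Theorem \ref{thm:l2-toeplitz-selfless-correspondence}. By it, $\ell^2(H_2)$ is again Toeplitz selfless. Its coefficient maps are sums of coefficient maps of $H_2$, so the extended coefficients of $\ell^2(H_2)$ and of $H_2$ coincide, and we may replace $H_2$ by $\ell^2(H_2)$. Now Lemma \ref{lem:weak-containment-ultrapower}, restricted to the first summand, gives a covariant map $H_1\to H_2^{\cU}$ over $\Delta_A$. Toeplitz exactness turns it into $\sigma_1\colon\cT(H_1)\to\cT(H_2)^{\cU}$ satisfying $E_2^{\cU}\sigma_1=\Delta_A E_1$ on $A$-valued level. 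Let $\sigma_2\colon\cT(H_2)\to A^{\mathcal V}$ witness Toeplitz selflessness of $H_2$, and set $\sigma=\sigma_2^{\cU}\sigma_1$. For $v,w\in H_2$ we get
$$
(\kappa_{v,w}^{\mathcal V})^{\cU}\sigma=(\Delta_A\kappa_{v,w}E_2)^{\cU}\sigma_1=\Delta_A\kappa_{v,w}E_1 .
$$

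Next consider the set of $\alpha\in CB(A)$ with $(\alpha^{\mathcal V})^{\cU}\sigma=\Delta_A\alpha E_1$. As in the proof of Lemma \ref{lem:selfless-check-generators}, this set is closed under linear combinations, norm limits, pre-composition by $M_{a,b}$, and post-composition by cb maps. Pre-composition works because $\sigma$ is multiplicative and equals $\Delta_A$ on $A$, while $E_1$ is $A$-bimodular. Hence the set contains every extended coefficient of $H_2$, and by hypothesis every $\kappa_{v,w}$ with $v,w\in H_1$. This is exactly the positively existential condition \eqref{sstoeplitz} for $H_1$.

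The main point needing care is the use of Toeplitz exactness. We must check that the expectation-intertwining statement of Lemma \ref{lem:toeplitz-exactness} holds on all of $\cT$, not only on the semicircular subalgebra. The lemma verifies this on spans of $\ell_\xi\ell_\eta^*$, which are dense in $\cT$, so the Toeplitz version requires no extra work. Everything else is bookkeeping with iterated ultrapowers.
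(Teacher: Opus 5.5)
Your proposal is correct and follows the paper's route. The paper proves this by repeating the proof of Theorem~\ref{thm:weak-containment-permanence-selfless} with Toeplitz--Pimsner algebras in place of Shlyakhtenko's algebras: Toeplitz exactness (Lemma~\ref{lem:toeplitz-exactness}) gives $\sigma_1$, one composes with $\sigma_2^{\cU}$, and one uses the closure properties of extended coefficients. Your direct use of Lemma~\ref{lem:weak-containment-ultrapower} at the level of $\ell^2(H_1)\to(\ell^2(H_2))^{\cU}$ in part (i), together with your remark that the extended coefficients of $H_2$ and $\ell^2(H_2)$ coincide in part (ii), spells out bookkeeping the paper leaves implicit.
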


\section{Selfless cp maps on $C^*$-algebras}\label{sec: cp maps}
Let $A$ be a unital C*-algebra and  $\phi\colon A \to A$ a cp map. Denote by $A\otimes_\phi A$ the C*-correspondence over $A$ obtained from $\phi$ via the KSGNS construction \cite{Lance1995}. We note that $A\otimes_{\phi} A$ is singly generated by $\xi_\phi=1\otimes 1$ as closed bimodule.

We endow $A\otimes_\phi A$ with the real structure generated by the vector $\xi_\phi$, which we call  the \emph{standard real structure}, and denote by $\std$.

\begin{definition}\label{def:selfless-cp-map}
We call a cp map $\phi\colon A\to A$ (weakly) selfless if  $(A\otimes_\phi A, \std)$ is (weakly) selfless. We call $\phi$ (weakly) Toeplitz selfless if $A\otimes_\phi A$ is (weakly) Toeplitz selfless.
\end{definition}

Let $\ell_\phi\in \cT(A\otimes_\phi A)$ be the left creation operator associated to $\xi_\phi$, and set $s_\phi = \ell_\phi+\ell_\phi^*$. Then $C^*(A,\ell_\phi)=\cT(A\otimes_\phi A)$ and $C^*(A,s_\phi)=\cS(A\otimes_\phi A ,\std)$. 
Let $E\colon C^*(A,\ell_\phi)\to A$ be the vacuum expectation. The  coefficient map $\kappa_{\xi_\phi,\xi_\phi}$ associated to $\xi_\phi$ is precisely $\phi$:
$$
\kappa_{\xi_\phi,\xi_\phi}(a) = \langle 1\otimes 1,\ a(1\otimes 1)\rangle=\phi(a),\qquad a\in A.
$$
Since $A\otimes_\phi A$ is singly generated by $\xi_\phi$, and
$\kappa_{\xi_\phi,\xi_\phi}=\phi$, we deduce from
Lemma \ref{lem:selfless-check-generators} that $\phi$ is selfless if and only if  the embedding
$$
(A;\phi)\hookrightarrow (C^*(A,s_\phi);\phi\circ E).
$$
is positively existential. Similarly, $\phi$ is Toeplitz selfless if and only if 
$$
(A;\phi)\hookrightarrow (C^*(A,\ell_\phi);\phi\circ E).
$$
is positively existential.

\begin{definition}\label{def:asa-inner-cp-map}
Let $\phi\in CP(A)$. We call $\phi$ approximately inner if it is a point-norm limit of cp maps of the form
$$
x\mapsto \sum_{i=1}^n v_i^*xv_i,
$$
where $v_1,\ldots,v_n\in A$. 
We call $\phi$ approximately selfadjoint-inner (a.s.a. inner) if it is a point-norm limit of cp maps of the form
$$
x\mapsto \sum_{i=1}^n h_i x h_i,
$$
where $h_1,\ldots,h_n\in A_{sa}$.
\end{definition}

If $\phi$ is weakly selfless, then $(A\otimes_\phi A, \std)\precsim (A,\std)$, by Theorem \ref{thm:asainner-correspondence}. This readily implies that $\phi$
is a.s.a. inner. In fact, a.s.a. innerness of $\phi$ holds in a uniform way, in the sense of the following theorem:

\begin{theorem}\label{thm:weak-selfless-uniform-asa-inner}
If $\phi\in CP(A)$ is weakly selfless, then it is a.s.a. inner. 
Moreover, for every $\epsilon>0$ there exists $N\in\N$ such that, for every finite set $F\subseteq A$, there exist selfadjoint elements $h_1,\ldots,h_N\in A$ satisfying
$$
\Big\|\phi(a)-\sum_{i=1}^N h_i a h_i\Big\|\leq \epsilon\|a\|	
$$
for all $a\in F$. Here $N$ depends only on $\epsilon$, and not on $F$.
\end{theorem}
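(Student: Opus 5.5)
The plan is to rerun the proof of Theorem \ref{thm:asainner-correspondence} for the single generating vector $\xi_\phi$. Throughout, we keep track of the fact that the number of semicircular copies needed is dictated by Voiculescu's inequality alone, uniformly in $a$, and not by the finite set $F$.

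Let $C=\cS(\ell^2(A\otimes_\phi A),\ell^2(\std))$ with vacuum expectation $E$. By Lemma \ref{lem:semicircular-iterated}, $C\cong C^*(A,s_\phi)^{*_A\infty}$. Write $s^{(k)}$ for the semicircular element associated to the copy of $\xi_\phi$ in the $k$-th summand. For each $a\in A$ the elements $x_k(a)=s^{(k)}as^{(k)}-\phi(a)$ are centered, lie in freely independent (with amalgamation over $A$) copies of $C^*(A,s_\phi)$, and satisfy $\|x_k(a)\|\le (4\|\phi\|+\|\phi\|)\|a\|$, since $\|s_\phi\|\le 2\|\phi\|^{1/2}$.

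The key step is the operator-valued Voiculescu/Junge--Parcet--Xu inequality \cite[Proposition 7.4]{junge2005picuineq}. It bounds $\|\sum_{k\le N}x_k\|$ by a universal constant times $\max_k\|x_k\|$ plus the square-function terms $\|\sum_k E(x_k^*x_k)\|^{1/2}$ and $\|\sum_k E(x_kx_k^*)\|^{1/2}$. Each of these terms is at most $\sqrt N\,c\|\phi\|^2\|a\|$, where $c$ is an absolute constant. Hence
$$
\Big\|\frac1N\sum_{k=1}^N s^{(k)}as^{(k)}-\phi(a)\Big\|\le \frac{c'\|\phi\|^2}{\sqrt N}\|a\|,
$$
with $c'$ absolute. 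Given $\epsilon>0$, we therefore choose $N$ with $c'\|\phi\|^2N^{-1/2}\le\epsilon/2$. This choice depends only on $\epsilon$ (and $\phi$), and the estimate holds simultaneously for every $a\in A$. The main obstacle is precisely this bookkeeping: one must make sure that the inequality really yields a bound linear in $\|a\|$, with the dependence on the $x_k$ only through norms comparable to $\|a\|$.

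Next we apply weak selflessness: there is $\sigma\colon C\to A^{\cU}$ with $\sigma|_A=\Delta_A$. Applying $\sigma$ gives, for all $a\in A$,
$$
\Big\|\frac1N\sum_k \sigma(s^{(k)})\Delta_A(a)\sigma(s^{(k)})-\Delta_A(\phi(a))\Big\|\le\frac\epsilon2\|a\|.
$$
Now fix a finite set $F$. Lift each $\sigma(s^{(k)})$ to a selfadjoint representing sequence $(h^{(k)}_m)_m$. For each $a\in F$, the set of indices $m$ at which the approximation holds with $\epsilon$ in place of $\epsilon/2$ belongs to $\cU$. Since $F$ is finite, we may intersect these finitely many sets and pick a single coordinate $m$ in the intersection. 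Setting $h_k=N^{-1/2}h_m^{(k)}$ gives
$$
\Big\|\phi(a)-\sum_{k=1}^N h_kah_k\Big\|\le\epsilon\|a\|\quad\text{for all } a\in F,
$$
with $N$ independent of $F$.

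Finally, letting $F$ increase and $\epsilon\to0$ shows that $\phi$ is a point-norm limit of maps of the form $x\mapsto\sum_k h_kxh_k$, so $\phi$ is a.s.a. inner. The uniform statement is exactly what was obtained in the previous step.
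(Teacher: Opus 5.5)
Your proposal is correct and follows the paper's proof: the paper also reruns the argument of Theorem \ref{thm:asainner-correspondence} for the single vector $\xi_\phi$, observing that the $N$ supplied by Voiculescu's inequality depends only on $\epsilon$ (and $\phi$), and then applies $\sigma$, lifts to selfadjoint representatives, and picks a coordinate that works for the finite set $F$. One small bookkeeping slip: since $\|x_k(a)\|\le 5\|\phi\|\|a\|$, the square-function terms are bounded by $5\sqrt N\|\phi\|\|a\|$, so the constant is linear rather than quadratic in $\|\phi\|$, which does not affect the argument.
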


\begin{proof} 
This follows from the proof of Theorem \ref{thm:asainner-correspondence}
applied to the singleton set $\{v\}=\{\xi_\phi\}\subset K$. The number $N$
chosen in the application of Voiculescu's inequality depends on $\epsilon$, but not on the elements of the finite set $F$.  
\end{proof}

Given a cp map $\phi\in CP(A)$, let us call $\psi\in CP(A)$ an extended coefficient
of $\phi$ if it is an operator norm limit of cb maps of the form
$$
x\mapsto\sum_{i=1}^n (\beta_i\circ \phi)(a_ixb_i),
$$
where $a_i,b_i\in A$ and $\beta_i\in CB(A)$. Note that in particular if
$\psi=\beta\circ\phi$ for some $\beta$, then $\psi$ is an extended coefficient
of $\phi$.

\begin{theorem}\label{thm:cp-map-permanence}
Let $A$ be a unital C*-algebra.		
\begin{enumerate}[(i)]
\item
If $\phi\in CP(A)$ is weakly selfless, then so is 
$$
\psi: x\mapsto \sum_{k=1}^n\sum_{i,j=1}^{m_k} a_{k,i}\phi(a_{k,i}^*xa_{k,j})a_{k,j}^*,
$$
where $a_{k,i}\in A$.  If $\phi$ is selfless, then the displayed map is selfless as well.

\item	
If $(\phi_i)_i\in CP(A)$ is a net of weakly selfless cp maps, and $\phi_i\to \phi$ in 
point-norm, then $\phi$ is weakly selfless. If, in addition, each $\phi_i$ is selfless and $\phi$ is an extended coefficient of $\phi_i$ for all $i$, then $\phi$ is selfless.

\item If $\phi,\psi\in CP(A)$  satisfy that $\phi$ is weakly selfless and $\psi$ is 
a.s.a. inner, then
$$
\theta:=\frac12(\phi\psi+\psi\phi),
$$
is weakly selfless. If in addition  $\phi$ is selfless, and $\theta$ is an extended coefficient of $\phi$, then  $\theta$ is selfless. In particular, this holds if $\psi$ and $\phi$ commute.

\item 
If $\phi\in CP(A_1)$ is weakly selfless, $\psi\in CP(A_2)$ is a.s.a. inner,
and $A_2$ is exact, then $\phi\otimes\psi$ is weakly selfless. If, moreover, $\phi$ is selfless, then $\phi\otimes\psi$ is selfless.
(The tensor product is the minimal one.)

\item 
If $\phi$ is a selfless unital cp map and $p\in A$ is a projection fixed by $\phi$, then $\phi|_{pAp}\colon pAp\to pAp$ is selfless.
	\end{enumerate}
\end{theorem}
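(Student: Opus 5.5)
The plan is to reduce every part to Theorem \ref{thm:weak-containment-permanence-selfless}, Theorem \ref{thm:ultraproduct-permanence-selfless}, or Theorem \ref{thm:selfless-subcorrespondences-directed-unions}. Each time we exhibit a covariant embedding of $(A\otimes_\psi A,\std)$ into a (weakly) selfless correspondence, or into an ultraproduct of such, and for the selfless statements we check that the relevant coefficient map is an extended coefficient. Throughout, we use that $A\otimes_\psi A$ is generated by $\xi_\psi$, so that Lemma \ref{lem:singly-generated-embedding} reduces each embedding to a single identity of the form $\psi(a)=\sum_k\langle \eta_k,a\eta_k\rangle$.

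For (i), put $\eta_k=\sum_i a_{k,i}\xi_\phi a_{k,i}^*\in \ell^2(A\otimes_\phi A)$, placed in the $k$-th summand. Each $\eta_k$ lies in the real structure, since $K$ is closed under sums and under $v\mapsto a^*va$. A direct computation gives $\sum_k\langle\eta_k,x\eta_k\rangle=\psi(x)$. Lemma \ref{lem:singly-generated-embedding} then yields a covariant embedding $(A\otimes_\psi A,\std)\to(\ell^2(A\otimes_\phi A),\ell^2(\std))$. The latter is (weakly) selfless by Theorem \ref{thm:l2-selfless-correspondence}, so Theorem \ref{thm:selfless-subcorrespondences-directed-unions} applies. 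Here $\psi$ is visibly an extended coefficient.

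For (ii), the maps $\xi_\phi\mapsto[(\xi_{\phi_i})_i]$ define a covariant embedding into $\prod_\cU (A\otimes_{\phi_i}A)$ with underlying map $\Delta_A$, because $\langle\xi_{\phi_i},a\xi_{\phi_i}\rangle=\phi_i(a)\to\phi(a)$. Theorem \ref{thm:ultraproduct-permanence-selfless} then gives both claims.

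For (iii), approximate $\psi$ by $x\mapsto\sum h_jxh_j$. The map $\frac12\sum_j(\phi(h_j\cdot h_j)+h_j\phi(\cdot)h_j)$ is of the form in (i): take the families $\{h_j\}$ applied inside $\phi$ and the $\{h_j\}$ outside. Here the outside conjugation corresponds to vectors $h_j\xi_\phi h_j$ and the inside one needs $\xi_\phi h_j$ realized as $\xi_{\phi\circ\mathrm{Ad}h_j}$, which is still a real vector because $h_j$ is selfadjoint. Hence these approximants are weakly selfless, and we conclude by (ii). When $\phi$ and $\psi$ commute, $\theta=\psi\circ\phi$. This is an extended coefficient of $\phi$, taking $\beta=\psi$, as is each approximant, so the selfless part also follows from (ii).

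For (iv), the main obstacle is tensoring. Approximating $\psi$ and using (ii), it suffices to treat $\phi\otimes\mathrm{Ad}$-type maps $x\mapsto\sum_j(1\otimes h_j)(\phi\otimes\id)(x)(1\otimes h_j)$. By (i), this reduces to showing that $\phi\otimes\id_{A_2}$ is (weakly) selfless. I would take $\sigma\colon C^*(A_1,s_\phi)\to A_1^\cU$ and tensor it with $\id_{A_2}$. We have $C^*(A_1\otimes A_2,s_{\phi\otimes\id})\cong C^*(A_1,s_\phi)\otimes A_2$, by the analogue of $\cT(H\boxtimes C)\cong\cT(H)\otimes C$. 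Exactness of $A_2$ is exactly what is needed for $A_1^\cU\otimes A_2\to(A_1\otimes A_2)^\cU$ to be a well-defined $*$-homomorphism. Its intertwining of $\phi\otimes\id$ with $(\phi\otimes\id)\circ E$ follows from the corresponding property of $\sigma$ on elementary tensors.

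For (v), compress: $pC^*(A,s_\phi)p$ contains $pAp$ and $ps_\phi p$. Since $\phi(p)=p$, the vector $p\xi_\phi p$ generates a corresponding compressed correspondence over $pAp$, isomorphic to $pAp\otimes_{\phi|}pAp$. Restricting $\sigma$ to $C^*(pAp,ps_\phi p)$ then lands in $(pAp)^\cU=pA^\cU p$ and intertwines the maps.
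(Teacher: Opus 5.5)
Your parts (i), (ii) and (v) are fine and follow the paper's route. For (i), the paper embeds into $(A\otimes_\phi A,\std)^{\oplus n}$ and invokes Theorem~\ref{thm:weak-containment-permanence-selfless}; your subcorrespondence argument is equivalent.

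The genuine gap is in (iii). The approximant $\frac12\sum_j\bigl(\phi(h_j\cdot h_j)+h_j\phi(\cdot)h_j\bigr)$ is not of the form in (i) in the way you describe. In (i) the element $a_{k,i}$ outside $\phi$ is tied to $a_{k,i}^*$ inside, because it is the vectors $a\xi_\phi a^*$ that lie in $\std$. Independent inner and outer families are only available in the Toeplitz setting (Theorem~\ref{thm:toeplitz-cp-map-permanence}(i)), where the real structure is all of $H$.

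Your vector bookkeeping is also off. You have $\kappa_{h\xi_\phi h,h\xi_\phi h}=h\phi(h\cdot h)h$, whereas the two terms you need are $\kappa_{\xi_\phi h,\xi_\phi h}=h\phi(\cdot)h$ and $\kappa_{h\xi_\phi,h\xi_\phi}=\phi(h\cdot h)$. Neither $h\xi_\phi$ nor $\xi_\phi h$ is real in general, even though $h$ is selfadjoint. For example, take $\phi=\mathrm{tr}(\cdot)1$ on $M_2$: the conjugate-linear flip $b\otimes c\mapsto c^*\otimes b^*$ fixes each $a^*\xi_\phi a=a^*\otimes a$, hence all of $\std$, but it does not fix $h\otimes 1$ for nonscalar $h$. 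Passing to $\xi_{\phi\circ\mathrm{Ad}h}$ does not help either, since nothing tells you $\phi\circ\mathrm{Ad}h$ is weakly selfless.

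The missing idea is the symmetrization, which is exactly why $\theta$ is defined as $\frac12(\phi\psi+\psi\phi)$. For $h=h^*$, the vectors
\[
p_h=\tfrac12(\xi_\phi h+h\xi_\phi)=\tfrac12\bigl[(1+h)\xi_\phi(1+h)-\xi_\phi-h\xi_\phi h\bigr],\qquad q_h=\tfrac1{2i}(\xi_\phi h-h\xi_\phi)=-\tfrac12\bigl[(1+ih)^*\xi_\phi(1+ih)-\xi_\phi-h\xi_\phi h\bigr]
\]
do lie in $\std$. Moreover, the cross terms cancel in $\kappa_{p_h,p_h}+\kappa_{q_h,q_h}=\frac12\bigl(h\phi(\cdot)h+\phi(h\cdot h)\bigr)$. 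So when $\psi=\sum_j\mathrm{Ad}\,h_j$, the assignment $\xi_\theta\mapsto(p_{h_1},q_{h_1},\ldots,p_{h_n},q_{h_n})$ embeds $(A\otimes_\theta A,\std)$ into $(A\otimes_\phi A,\std)^{\oplus 2n}$. For general a.s.a.\ inner $\psi$ this gives $(A\otimes_\theta A,\std)\precsim(A\otimes_\phi A,\std)$.

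This also affects the rest of your argument.

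\emph{Selfless halves of (iii) and (iv).} You cannot appeal to (ii) here. That would require $\theta$ (respectively $\phi\otimes\psi$) to be an extended coefficient of each approximant, which is not what you checked. Instead, apply Theorem~\ref{thm:weak-containment-permanence-selfless}(ii) directly to the weak containment above. Since $\theta$ is an extended coefficient of $\phi$, every coefficient map of $A\otimes_\theta A$ is an extended coefficient of $A\otimes_\phi A$.

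\emph{The reduction ``by (i)'' in (iv).} This fails for the same reason as (iii). With multipliers in $1\otimes A_2$, form (i) only yields maps $\sum_k\mathrm{Ad}(1\otimes P_k)\circ(\phi\otimes\id)$ with $P_k\ge 0$. That misses, e.g., $\mathrm{Ad}(1\otimes\mathrm{diag}(1,-1))\circ(\phi\otimes\id)$. The paper instead observes that $\phi\otimes\id_{A_2}$ and $\id_{A_1}\otimes\psi$ commute and that $\id_{A_1}\otimes\psi$ is a.s.a.\ inner, and then applies (iii). Alternatively, $\phi\otimes\id$ is $1\otimes A_2$-bimodular, so $(1\otimes h)\xi=\xi(1\otimes h)$; hence $q_h=0$ and $p_h=(1\otimes h)\xi$ is real.

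\emph{The weakly selfless case of (iv).} Here $\sigma$ is only available on $\cS(\ell^2(A_1\otimes_\phi A_1),\ell^2(\std))$, not on $C^*(A_1,s_\phi)$. Your tensoring argument should be run on that algebra, where it goes through verbatim.
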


\begin{proof}
(i) For $j=1,\ldots,n$, define $v_j\in A\otimes_\phi A$ by
$$
v_k=\sum_{i=1}^{m_k} a_{k,i}\xi_{\phi} a_{k,i}^*.  
$$
Now, using Lemma \ref{lem:singly-generated-embedding}, embed $(A\otimes_\psi A,\std)$ into $(A\otimes_\phi A,\std)^{\oplus n}$ by  
mapping 
$$
\xi_{\psi}\mapsto (v_1,\ldots,v_n).
$$
Apply Theorem \ref{thm:weak-containment-permanence-selfless}.

(ii) 
Choose an ultrafilter $\cU$ on the index set and containing the tail sets $\{i:i\geq i_0\}$.
From the point-norm convergence $\phi_i\to \phi$, we get a covariant embedding
\begin{align*}
(A\otimes_\phi A,\std)&\to \prod_{\cU} (A\otimes_{\phi_i} A,\std)\\
\xi_\phi &\mapsto [(\xi_{\phi_i})_i], 
\end{align*}
where the underlying *-homomorphism is $\Delta_A\colon A\to A^{\cU}$. We now apply Theorem \ref{thm:ultraproduct-permanence-selfless}.

(iii) Note first that for selfadjoint $a\in A$, the vectors 
$$
p_a=\frac{1}{2}(\xi_\phi a+a\xi_\phi),
\qquad
q_a=\frac{1}{2i}(\xi_\phi a-a\xi_\phi).
$$
belongs to  $\std$ (are real) and satisfy that
$$
\kappa_{p_a,p_a}(x)+\kappa_{q_a,q_a}(x)
=
\frac12(a\phi(x)a+\phi(axa)).
$$

Suppose that $\psi$ is selfadjoint inner, i.e., 
$$
\psi(x)=\sum_{j=1}^n a_jxa_j,
$$
for some $a_1,\ldots,a_n$ selfadjoint. Define $p_j,q_j\in A\otimes_{\phi} A$ by
$$
p_j=\frac{1}{2}(\xi_\phi a_j+a_j\xi_\phi),
\qquad
q_j=\frac{1}{2i}(\xi_\phi a_j-a_j\xi_\phi).
$$
Then $(A\otimes_\theta A,\std)$ embeds into $(A\otimes_\phi A,\std)^{\oplus 2n}$ via  
mapping 
$$
\xi_{\theta}\mapsto (p_1,q_1,\ldots,p_n,q_n).
$$
(Thus, $\theta$ has  the form dealt with in (i)). If $\psi$ is a.s.a. inner, we deduce that $A\otimes_\theta A$ is weakly contained in $A\otimes_\phi A$. Apply Theorem \ref{thm:weak-containment-permanence-selfless}.

(iv) We first show that $\phi\otimes \mathrm{id}_{A_2}$ is weakly selfless whenever $\phi$ is weakly selfless, and selfless whenever $\phi$ is selfless. We will denote the C*-correspondence obtained from a cp map $\phi$ by $H_\phi$, rather than $A\otimes_\phi A$ as we have done before, to keep the notation compact.

Let $C=\cS(\ell^2(H_\phi),\ell^2(\std))$, and let $E\colon C\to A_1$ be the vacuum expectation. Since $\phi$ is weakly selfless, there exist an ultrafilter $\cU$ and a *-homomorphism $\sigma\colon C\to A_1^{\cU}$
such that $\sigma|_{A_1}=\Delta_{A_1}$. If $\phi$ is selfless, then, by the $\ell^2$-stability of selflessness, $\sigma$ may moreover be chosen so that
$$
\phi^{\cU}\circ\sigma = \Delta_{A_1}\circ\phi\circ E.
$$
Tensoring $\sigma$ with $\id_{A_2}$ gives a *-homomorphism
$$
\sigma\otimes\id_{A_2}\colon C\otimes A_2\to A_1^{\cU}\otimes A_2.
$$
Since $A_2$ is exact, there is a  canonical embedding
$A_1^{\cU}\otimes A_2\to(A_1\otimes A_2)^{\cU}$.
We therefore obtain a *-homomorphism
$\widetilde{\sigma}\colon C\otimes A_2\to(A_1\otimes A_2)^{\cU}$.

There is a natural isomorphism
$$
\cS(\ell^2(H_\phi),\ell^2(\std))\otimes A_2
\cong \cS(\ell^2( H_{\phi\otimes\id_{A_2}}),
\ell^2(\std)),
$$
under which the expectation $E\otimes\id_{A_2}$ on the left-hand side  corresponds to the vacuum expectation on the right-hand side. (This is  proven by first obtaining the corresponding isomorphism for Toeplitz-Pimsner C*-algebras using their universal property, and then observing that it  restricts to the isomorphism displayed above.) Since $\widetilde{\sigma}$ restricts to the diagonal embedding on $A_1\otimes A_2$, this proves that $\phi\otimes\id_{A_2}$ is weakly selfless. In the selfless case, the canonical embedding $A_1^{\cU}\otimes A_2\to (A_1\otimes A_2)^{\cU}$ intertwines $\phi^{\cU}\otimes\id_{A_2}$ with
$(\phi\otimes\id_{A_2})^{\cU}$. Consequently,
$$
(\phi\otimes\id_{A_2})^{\cU}\circ\widetilde{\sigma}
=
\Delta_{A_1\otimes A_2}
\circ(\phi\otimes\id_{A_2})
\circ(E\otimes\id_{A_2}).
$$
Thus $\phi\otimes\id_{A_2}$ is selfless.

Since $\psi$ is a.s.a. inner, so is $\mathrm{id}_{A_1}\otimes \psi$. Moreover, $\phi\otimes \mathrm{id}_{A_2}$ and
$\mathrm{id}_{A_1}\otimes \psi$ commute, and
$$
\phi\otimes \psi =
(\phi\otimes \mathrm{id}_{A_2})\circ
(\mathrm{id}_{A_1}\otimes \psi).
$$
The result follows from part (iii).

(v)    We have
$$
C^*(pAp, s_{\phi|_{pAp}}) \subset p C^*(A, s_\phi)p \subset (pAp)^{\cU}.
$$

\end{proof}

\begin{remark}
The previous theorem holds with ``completely selfless'' in place of ``selfless'', and in this case we may remove the assumption
of exactness of $A_2$ in (iv).

\end{remark}

\begin{corollary}\label{cor:cp-map-powers-and-cesaro-limits}
If $\phi\in CP(A)$ is (weakly) selfless, then so is $\phi^k$ for all $k=1,2,\ldots$. Moreover, if the limit
$$
\frac1N\sum_{k=1}^N \phi^k\to P
$$
exists in point-norm, then $P$ is (weakly) selfless.
\end{corollary}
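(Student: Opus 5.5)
We prove the statement about $\phi^k$ by induction on $k$, with Theorem \ref{thm:cp-map-permanence} (iii) as the main tool. The case $k=1$ is the hypothesis. For the inductive step, suppose $\phi^{k}$ is (weakly) selfless. Then $\phi$ is weakly selfless, so Theorem \ref{thm:weak-selfless-uniform-asa-inner} shows that $\phi$ is a.s.a.\ inner. We apply Theorem \ref{thm:cp-map-permanence} (iii) with the weakly selfless map $\phi^k$ and the a.s.a.\ inner map $\psi=\phi$. Since $\phi^k$ and $\phi$ commute, we get
$$
\theta=\tfrac12(\phi^k\phi+\phi\phi^k)=\phi^{k+1}.
$$
Hence $\phi^{k+1}$ is weakly selfless. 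In the selfless case, the last clause of (iii) (commuting maps) shows that $\phi^{k+1}$ is selfless.

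This clause deserves a check, since it is the only nonformal point. The requirement is that $\phi^{k+1}$ be an extended coefficient of $\phi^k$ (and $\phi^{k+1}=\phi\circ\phi^k$ is indeed a post-composition of $\phi^k$ by the cb map $\phi$), or equivalently of $\phi$. This holds because $\phi^{k+1}=\phi^k\circ\phi$ is $\phi$ post-composed with the cb map $\phi^k$, and such post-compositions are allowed in the definition of extended coefficients. So the induction closes in both the weak and the selfless settings.

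For the Cesàro limit, set $P_N=\frac1N\sum_{k=1}^N\phi^k$. We first show that each $P_N$ is (weakly) selfless. Rather than controlling sums directly, we use the correspondence picture. The map $\xi_{P_N}\mapsto (N^{-1/2}\xi_{\phi^k})_{k=1}^N$ satisfies the hypothesis of Lemma \ref{lem:singly-generated-embedding}, so it defines a covariant embedding
$$
(A\otimes_{P_N}A,\std)\hookrightarrow\bigoplus_{k=1}^N(A\otimes_{\phi^k}A,\std).
$$
A finite direct sum of (weakly) selfless correspondences is again (weakly) selfless. To see this, embed each summand into $\ell^2(H_\phi)$ via $\xi_{\phi^k}\mapsto$ a vector realizing $\phi^k=\phi^{k-1}\circ\phi$. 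Alternatively, and more cleanly, the sum $\bigoplus_k H_{\phi^k}$ is weakly contained in $(A\otimes_\phi A,\std)$, because each $\phi^k$ is a.s.a.\ inner composed with $\phi$, as in the proof of (iii). Its coefficients are then $\phi^{k-1}\circ\phi$, which are extended coefficients of $\phi$. Theorem \ref{thm:weak-containment-permanence-selfless} therefore gives that $P_N$ is (weakly) selfless, and moreover that $P_N$ is an extended coefficient of $\phi$.

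Finally, $P_N\to P$ in point-norm, so Theorem \ref{thm:cp-map-permanence} (ii) gives weak selflessness of $P$. For selflessness of $P$, (ii) requires that $P$ be an extended coefficient of each $P_N$. This is the main obstacle, since point-norm convergence does not yield operator-norm convergence. The way around it is the identity $P\circ\phi=P$, which follows from $P_N\phi-P_N=\frac1N(\phi^{N+1}-\phi)\to0$ in point-norm. Consequently $P=P\circ\phi^k$ for every $k$, and averaging gives $P=P\circ P_N$. Thus $P$ is the post-composition of $P_N$ by the cb map $P$, hence an extended coefficient of $P_N$, and (ii) applies.
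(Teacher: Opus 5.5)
Your argument for the powers $\phi^k$ is the paper's: apply Theorem \ref{thm:cp-map-permanence} (iii) with the a.s.a.\ inner map $\psi=\phi$, which commutes with $\phi^k$. What (iii) needs is that $\phi^{k+1}=\phi\circ\phi^k$ be an extended coefficient of $\phi^k$, and you check this. The ``equivalently of $\phi$'' is not an equivalence, but it is harmless.

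For the Ces\`aro limit your route is correct but genuinely different, and longer. The paper applies (iii) once more, to the selfless map $\phi$ and $\psi=P$. Here $P$ is a.s.a.\ inner as a point-norm limit of convex combinations of the a.s.a.\ inner maps $\phi^k$. Since $\phi P=P\phi=P$, one gets $\theta=\tfrac12(\phi P+P\phi)=P$, and $P$ is an extended coefficient of $\phi$ because the two maps commute. You instead first show that every Ces\`aro mean $P_N$ is (weakly) selfless, via $(A\otimes_{P_N}A,\std)\precsim(A\otimes_\phi A,\std)$ and Theorem \ref{thm:weak-containment-permanence-selfless}. You then pass to the limit with Theorem \ref{thm:cp-map-permanence} (ii), meeting its extended-coefficient hypothesis through the identity $P=P\circ P_N$. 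This gives the extra information that all $P_N$ are (weakly) selfless. The price is an additional limit theorem, where the paper needs only the single identity $\tfrac12(\phi P+P\phi)=P$.

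Two points should be tidied.

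First, drop the general claim that finite direct sums of (weakly) selfless correspondences are (weakly) selfless; nothing in the paper gives this. Drop also the proposed exact embedding into $\ell^2(A\otimes_\phi A)$: since $\phi^{k-1}$ is only approximately inner, $\phi^k$ need not be exactly a coefficient there. What actually does the work is the weak containment argument, which passes to finite direct sums by concatenating the approximating tuples. More directly, $P_N=\psi_N\circ\phi$ with $\psi_N=\frac1N\sum_{j=0}^{N-1}\phi^j$, which is a.s.a.\ inner and commutes with $\phi$, so (iii) gives selflessness of $P_N$ at once.

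Second, $\frac1N\phi^{N+1}\to0$ in point-norm is not automatic, since $\phi$ need not be contractive. It does follow from the assumed convergence, because $\frac1N\phi^N=P_N-\frac{N-1}{N}P_{N-1}\to0$ in point-norm.
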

\begin{proof}
Let $k\geq 1$. Suppose, inductively, that $\phi^k$ is (weakly) selfless (the case $k=1$ is our hypothesis). Since $\phi$
is a.s.a. inner, and $\phi$ and $\phi^k$ commute,  
$\phi^{k+1}=\phi^{k}\circ\phi$ is (weakly)
selfless by Theorem \ref{thm:cp-map-permanence} (iii).

Suppose that the displayed point norm limit exists. Then $P$ is a.s.a. inner, as it is a point-norm limit of a.s.a. inner maps,  and $\phi P=P\phi =P$. By Theorem \ref{thm:cp-map-permanence} (iii), $P$ is (weakly) selfless.
\end{proof}

\begin{corollary}
    Let $G$ be a countable discrete group.  Let $v \in \ell^2(G; \R)$, and let $\phi_{v}\colon C_r^*(G)\to C_r^*(G)$ be the Fourier multiplier map associated to 
    $g\mapsto \langle v, \lambda_g v\rangle$, i.e.,  
    $$
    \phi_v(\lambda_g)=\langle v, \lambda_g v\rangle\lambda_g,
    $$
    for all $g\in G$. If $(C_r^\ast(G),\tau)$ is (completely) selfless (where $\tau$ denotes the canonical trace on $C_r^*(G)$), then $\phi_v$ is (completely) selfless.
\end{corollary}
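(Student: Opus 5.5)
The plan is to realize $\phi_v$ as an extended coefficient of the trace $\tau$, viewed as a cp map $A\to A$ with $A=C_r^*(G)$, and to obtain a weak containment of correspondences. Then Theorem \ref{thm:weak-containment-permanence-selfless} (ii) applies, together with the fact that a selfless state corresponds to a selfless cp map. Concretely, regard $\tau$ as the cp map $x\mapsto\tau(x)1$. Its KSGNS correspondence $A\otimes_\tau A$ is $\ell^2(G)\otimes A$, i.e.\ the coarse-type correspondence $L^2(A,\tau)\otimes A$. Selflessness of $(A,\tau)$ as a C*-probability space should coincide with selflessness of the cp map $\tau$. To see this, use that $C^*(A,s_\tau)$ with vacuum expectation is $A*_{\C}C^*(s)$ with the free product state, and that intertwining $\tau\circ E$ is exactly intertwining the state.

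The key step is to write $\phi_v$ using elements of the correspondence $A\otimes_\tau A$. Assume first that $v$ is finitely supported, and set
\[
\eta=\sum_g v(g)\,\lambda_g\xi_\tau\lambda_g^*.
\]
This vector is real, since $\xi_\tau$ generates $\std$ and $\std$ is closed under $a^*(\cdot)a$ and real combinations. Computing,
\[
\kappa_{\eta,\eta}(x)=\sum_{g,h}v(g)v(h)\,\lambda_g\tau(\lambda_g^*x\lambda_h)\lambda_h^*.
\]
For $x=\lambda_k$ the only surviving terms have $h=g^{-1}\cdots$, precisely $k=gh^{-1}$. This gives $\sum_h v(kh)v(h)\lambda_k=\langle v,\lambda_k v\rangle\lambda_k$ up to the convention $\lambda_k v(h)=v(k^{-1}h)$, which I would fix by replacing $v$ with $\check v$ if needed. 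The result is exactly the map of the form in Theorem \ref{thm:cp-map-permanence} (i) with $n=1$, $a_i=\lambda_{g_i}$, up to real scalars, which may be absorbed by splitting $v$ into positive and negative parts if necessary. Since $\kappa_{\eta,\eta}=\sum v(g)v(h)L_{\lambda_g}\circ\tau\circ M_{\lambda_g^*,\lambda_h}\circ R_{\lambda_h^*}$, the map $\phi_v$ is an extended coefficient of $\tau$. Hence Theorem \ref{thm:cp-map-permanence} (i) gives that $\phi_v$ is selfless.

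For general $v\in\ell^2(G;\R)$, approximate $v$ by finitely supported real $v_n$ in $\ell^2$. Then $\langle v_n,\lambda_g v_n\rangle\to\langle v,\lambda_g v\rangle$ uniformly in $g$. The maps $\phi_{v_n}$ are cp, and $\|\phi_{v_n}-\phi_v\|_{cb}\le\|v_n-v\|\,(\|v_n\|+\|v\|)$, because $\phi_w$ is a coefficient of the vector $\sum w(g)\lambda_g\xi\lambda_g^*$, whose norm is $\|w\|_2$, and coefficients depend continuously on vectors. So $\phi_{v_n}\to\phi_v$ in cb norm, and in particular in point-norm. Moreover $\phi_v=\kappa_{\eta_v,\eta_v}$, where $\eta_v$ is the norm limit of the $\eta_{v_n}$. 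It is therefore an operator-norm limit of extended coefficients of $\tau$, and hence itself an extended coefficient of $\tau$. The second clause of Theorem \ref{thm:cp-map-permanence} (ii) then yields selflessness; alternatively, apply Theorem \ref{thm:weak-containment-permanence-selfless} (ii) directly to the embedding $\xi_{\phi_v}\mapsto\eta_v$. The completely selfless case follows by the same argument, per the Remark.

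The main obstacle I expect is the identification step: selflessness of $(A,\tau)$ in the state sense must match selflessness of the cp map $x\mapsto\tau(x)1$ in Definition \ref{def:selfless-cp-map}. The free product $(A,\tau)*(C^*(s),\text{semicircle})$ must be identified with $C^*(A,s_\tau)$, including the intertwining of $\tau^{\cU}$ versus $(\tau(\cdot)1)^{\cU}$. I would also need the fact that selflessness of a state can be witnessed with $C$ semicircular, which I believe is known from \cite{robertselfless}. The other delicate point is bookkeeping of the left/right regular convention in the coefficient computation.
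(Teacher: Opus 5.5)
Your fallback route, embedding $(A\otimes_{\phi_v}A,\std)$ into $(A\otimes_\tau A,\std)$ via $\xi_{\phi_v}\mapsto\eta_v=\sum_g v(g)\lambda_g\xi_\tau\lambda_g^*$, is exactly the paper's proof. The sum converges in norm because $\langle\lambda_g\xi_\tau\lambda_g^*,\lambda_h\xi_\tau\lambda_h^*\rangle=\delta_{g,h}1$, and $\eta_v$ lies in $\std$ because $\std$ is a closed real subspace. The state/cp-map identification you flagged as the main obstacle is Theorem~\ref{thm:prob-selfless-equivalent-cp-map}, and the paper concludes via the subcorrespondence result Theorem~\ref{thm:selfless-subcorrespondences-directed-unions} rather than weak containment.

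Your primary route is unnecessary and slightly off in two places. First, for mixed-sign $v$ the vector $\eta$ is not of the form $\sum_i a_i\xi_\tau a_i^*$ underlying Theorem~\ref{thm:cp-map-permanence}~(i); splitting $v=v_+-v_-$ leaves cross terms with a minus sign. Second, the second clause of Theorem~\ref{thm:cp-map-permanence}~(ii) requires $\phi_v$ to be an extended coefficient of each $\phi_{v_n}$, not of $\tau$. This does hold, but only after observing $\tau=\|v_n\|_2^{-2}\,\tau\circ\phi_{v_n}$.
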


\begin{proof}
The assumption of (complete) selflessness of $(C_r^\ast(G),\tau)$ means that the trace $\tau$
is (completely) selfless when viewed as a cp map on $C_r^*(G)$ (see Theorem \ref{thm:prob-selfless-equivalent-cp-map}). We can embed 
$(C_r^\ast(G) \otimes_{\phi_v} C_r^\ast(G), \std)$ in 
$(C_r^\ast(G) \otimes_\tau C_r^\ast(G), \std)$ by  the assignment
    \begin{equation*}
   \xi_{\phi_v}\mapsto     \sum_{g \in G} v_g (\lambda_g \otimes \lambda_g^\ast).
    \end{equation*}
    (Indeed, a quick calculation shows that the vector on the right-hand side gives rise to $\phi_v$ as a coefficient map.)
The result now follows from Theorem \ref{thm:selfless-subcorrespondences-directed-unions}.    
\end{proof}

The following is a cp-map analogue of Ozawa's selflessness criterion \cite[Lemma 12 and Theorem 13]{ozawa2025proximalityselflessnessgroupcalgebras} for the case of a state.

\begin{proposition}\label{prop:cp-map-selflessness-criterion}
Let $\phi\in CP(A)$. Suppose that there exist a unital inclusion $A\subseteq C$, a cp map $\psi\in CP(C)$ extending $\phi$ 
to $C$,  and   $v\in C^{\cU}$ (for some ultrafilter) such that 
\begin{enumerate}[(1)]
\item
$v^*av=\phi(a)$ for all  $a\in A$,
\item 
$\psi^{\cU}(avv^*a^*)=0$ for all $a\in A$,
\item
$v+v^*\in A^{\cU}$.
\end{enumerate}
Then $\phi$ is selfless.
\end{proposition}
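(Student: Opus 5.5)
The plan is to build the witnessing $*$-homomorphism directly from $v$. First I construct a map on the whole Toeplitz--Pimsner algebra $\cT(A\otimes_\phi A)=C^*(A,\ell_\phi)$ into $C^{\cU}$. Then I restrict it to $C^*(A,s_\phi)$ and check that it lands in $A^{\cU}$. Finally I verify the intertwining $\phi^{\cU}\circ\sigma=\Delta_A\circ\phi\circ E$, which is what selflessness of $\phi$ means (via Lemma \ref{lem:selfless-check-generators} with the single generator $\xi_\phi$).

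\emph{Step 1 (the representation).} Condition (1) says $\langle \xi_\phi, a\xi_\phi\rangle=\phi(a)=v^*av=\langle v,\Delta_A(a) v\rangle$, where $C^{\cU}$ is viewed as the identity correspondence over itself. By Lemma \ref{lem:singly-generated-embedding}, the assignment $\xi_\phi\mapsto v$ extends to a covariant map $A\otimes_\phi A\to C^{\cU}$ with underlying $*$-homomorphism $A\to A^{\cU}\subseteq C^{\cU}$ (the diagonal map). This is a representation of $A\otimes_\phi A$ in $C^{\cU}$. The universal property of $\cT$ then gives a $*$-homomorphism $\pi\colon \cT(A\otimes_\phi A)\to C^{\cU}$ with $\pi(\ell_\phi)=v$ and $\pi|_A=\Delta_A$. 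In particular $\pi(s_\phi)=v+v^*\in A^{\cU}$ by (3). Since $C^*(A,s_\phi)$ is generated by $A$ and $s_\phi$, the restriction $\sigma:=\pi|_{C^*(A,s_\phi)}$ takes values in $A^{\cU}$ and restricts to $\Delta_A$ on $A$.

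\emph{Step 2 (vanishing lemma).} I show that $\psi^{\cU}(c\,v^*a)=0$ and $\psi^{\cU}(a\,v\,c)=0$ for all $c\in C^{\cU}$ and $a\in A$. Set $b=v^*a$, so $b^*b=a^*vv^*a$ and $\psi^{\cU}(b^*b)=0$ by (2) (applied to $a^*$). The Kadison--Schwarz type inequality for cp maps, $\|\psi^{\cU}(c b)\|^2\le\|\psi^{\cU}(cc^*)\|\,\|\psi^{\cU}(b^*b)\|$, gives the first identity. The second follows by taking adjoints, since $\psi^{\cU}$ is selfadjoint. Note also that $\psi^{\cU}$ restricted to $A^{\cU}$ is $\phi^{\cU}$, because $\psi$ extends $\phi$.

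\emph{Step 3 (Wick-type reduction).} This is the main step and the expected obstacle. The linear span of words $a_0 x_1 a_1 x_2\cdots x_n a_n$ with $x_i\in\{\ell_\phi,\ell_\phi^*\}$ and $a_i\in A$ is dense in $\cT(A\otimes_\phi A)$. In $\cT$ the relation $\ell_\phi^* a\ell_\phi=\phi(a)$ holds. The same relation $v^*av=\phi(a)$ holds in $C^{\cU}$ by (1), and $\pi$ respects it. Apply this relation repeatedly to reduce any word to a combination of reduced words, that is, words with no $\ell_\phi^*$ immediately followed (through an element of $A$) by $\ell_\phi$. Such a word is either an element of $A$, or of the form $a_0\ell_\phi\cdots\ell_\phi^* a_k$, or has only $\ell_\phi$'s, or has only $\ell_\phi^*$'s. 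On the Fock space, $E$ kills every reduced word containing at least one letter. On the $C^{\cU}$ side, $\pi$ sends a nontrivial reduced word either to one starting with $a_0v$ or to one ending with $v^*a_n$ (a word with only $v$'s starts with $a_0v$; one with only $v^*$'s ends with $v^*a_n$). By Step 2, $\psi^{\cU}$ kills both kinds. Hence $\psi^{\cU}\circ\pi=\Delta_A\circ\phi\circ E$ on a dense set, and therefore everywhere by continuity. Restricting to $C^*(A,s_\phi)$, where $\pi$ takes values in $A^{\cU}$, gives $\phi^{\cU}\circ\sigma=\Delta_A\circ\phi\circ E$. So $\sigma$ witnesses selflessness of $\phi$.

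The delicate point is the bookkeeping in Step 3: I need that the reduction is consistent on both sides, and that after reduction every nonscalar word is annihilated by $E$ and by $\psi^{\cU}\circ\pi$. I would handle this by induction on word length, always contracting the leftmost occurrence of $\ell_\phi^* a\ell_\phi$.
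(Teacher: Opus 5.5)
Your proposal is correct and follows the paper's proof: you obtain $\pi\colon \cT(A\otimes_\phi A)\to C^{\cU}$ from the universal property with $\pi(\ell_\phi)=v$, use (3) to see that $C^*(A,s_\phi)$ lands in $A^{\cU}$, use Cauchy--Schwarz with (2) to show that $\psi^{\cU}\circ\pi$ vanishes on $a\ell_\phi x$ and $x\ell_\phi^*a$, and conclude $\psi^{\cU}\circ\pi=\Delta_A\circ\phi\circ E$. Your Step 3 spells out the reduction to creation--annihilation words that the paper leaves implicit. The ``consistency'' issue you flag there is automatic, since $\pi$ is a $*$-homomorphism and so respects the relation $\ell_\phi^*a\ell_\phi=\phi(a)$.
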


\begin{proof}
From $v^*av=\phi(a)$ we get a representation of $A\otimes_\phi A$ in $C^{\cU}$ 
(a covariant embedding into the identity correspondence of $C^{\cU}$) such that
$\xi_{\phi}\mapsto v$, and with underlying *-homomorphism the embedding of $A$ in $C^{\cU}$ (Lemma \ref{lem:singly-generated-embedding}). Thus, by the universal property of the Toeplitz-Pimsner C*-algebra, there exists a *-homomorphism $\sigma\colon \cT(A\otimes_{\phi} A)\to C^{\cU}$ such that $\sigma|_A$ is the diagonal embedding of $A$ in $C^{\cU}$ and
$\sigma(\ell_{\phi})=v$. 
Since $v+v^*\in A^{\cU}$, the restriction of $\sigma$ 
to $C^*(A,s_{\phi})$ ranges in $A^{\cU}$.

Consider $\tilde\phi=\psi^{\cU}\sigma\colon \cT(A\otimes_{\phi}A)\to C^{\cU}$.
From $\psi^{\cU}(avv^*a^*)=0$ for all $a\in A$ we deduce that $\tilde\phi(a\ell_{\phi}\ell_{\phi}^*a^*)=0$ for all $a\in A$. By Cauchy-Schwarz, we obtain that
$$
\tilde\phi(a\ell_{\phi}x)=\tilde\phi(x\ell_{\phi}^*a)=0
$$
for all $a\in A$ and $x\in \cT(A\otimes_\phi A)$. Together with the fact that $\tilde\phi|_A=\Delta_A\circ \phi|_A$, this implies  that $\tilde\phi=\Delta_A\circ \phi\circ E$, i.e.,
 $\psi^{\cU}\circ \sigma=\Delta_A\circ \phi\circ E$, where $E\colon \cT(A\otimes_\phi A)\to A$ denotes the vacuum expectation. Since $\psi$ extends $\phi$ and  $\sigma|_{C^*(A,s_\phi)}$ ranges in $A^{\cU}$, we get
 $\phi^{\cU}\circ \sigma|_{C^*(A,s_{\phi})}=\Delta_A\circ \phi\circ E$, as desired.
\end{proof}

We formulate a finite sets and $\epsilon$ version of the above criterion:

\begin{proposition}\label{prop:cp-map-selflessness-criterion-epsilon}
Let $\phi\in CP(A)$. Suppose that there exist a unital inclusion $A\subseteq C$ and a cp map $\psi\in CP(C)$ extending $\phi$ 
to $C$ such that for every finite set $F\subset A$
and $\epsilon>0$, there exists $v\in C$ such that 
\begin{enumerate}[(1)]
\item
$\|v^*av-\phi(a)\|<\epsilon$ for all  $a\in F$,
\item 
$\|\psi(avv^*a^*)\|<\epsilon$ for all $a\in F$,
\item
$\|v+v^*-s\|<\epsilon$ for some  $s\in A$.
\end{enumerate}
Then $\phi$ is selfless.
\end{proposition}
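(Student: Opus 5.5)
The plan is to reduce to Proposition \ref{prop:cp-map-selflessness-criterion} by passing to an ultrapower indexed by pairs $(F,\epsilon)$. Let $I$ be the set of pairs $(F,\epsilon)$, with $F\subset A$ finite and $\epsilon>0$, ordered by $(F,\epsilon)\le (F',\epsilon')$ iff $F\subseteq F'$ and $\epsilon'\le\epsilon$. Choose an ultrafilter $\cU$ on $I$ containing all tail sets. For each $i=(F,\epsilon)$ the hypothesis gives $v_i\in C$ and $s_i\in A$ satisfying (1)--(3). Since $\|v_i^*v_i-\phi(1)\|<\epsilon$ once $1\in F$, the family $(v_i)$ is eventually bounded; redefining $v_i=0$ off a tail makes it bounded. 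It then defines $v=[(v_i)_i]\in C^{\cU}$.

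We then verify the three conditions of Proposition \ref{prop:cp-map-selflessness-criterion} for this $v$.
\begin{enumerate}[(1)]
\item For fixed $a\in A$ we have $a\in F$ for $\cU$-almost all $i$, so $\|v_i^*av_i-\phi(a)\|\to 0$ along $\cU$. Hence $v^*\Delta_C(a)v=\Delta_A(\phi(a))$.
\item In the same way, $\psi(av_iv_i^*a^*)\to 0$ along $\cU$, so $\psi^{\cU}(avv^*a^*)=0$.
\item The elements $s_i$ satisfy $\|s_i\|\le \|v_i+v_i^*\|+\epsilon$, so they are bounded. Thus $[(s_i)_i]\in A^{\cU}$, and $\|v_i+v_i^*-s_i\|<\epsilon_i\to0$ gives $v+v^*=[(s_i)_i]\in A^{\cU}\subseteq C^{\cU}$.
\end{enumerate}

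Proposition \ref{prop:cp-map-selflessness-criterion} is stated with $\psi^{\cU}$ on $C^{\cU}$ and with $A^{\cU}\subseteq C^{\cU}$ via the canonical inclusion. Both make sense for an ultrafilter on an arbitrary index set $I$, and the proof of that proposition (the universal property of $\cT$, Cauchy--Schwarz, and restriction to $C^*(A,s_\phi)$) does not use countability of the index set. The proposition therefore applies directly and shows that $\phi$ is selfless.

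The only mildly delicate point is boundedness, which is needed to land in the ultrapower. It is handled in (1) by including $1$ in $F$ and in (3) by the estimate on $\|s_i\|$. Everything else is a routine translation from approximate identities to exact ones in the ultrapower, with the definition of positively existential (Definition \ref{def:positively-existential}) allowing an arbitrary ultrafilter.
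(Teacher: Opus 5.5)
Your proof is correct, and it is the ultrapower reduction to Proposition \ref{prop:cp-map-selflessness-criterion} that the paper has in mind; the paper states this $\epsilon$-version without proof. One cosmetic point: the $s_i$ are bounded only on the tail where $1\in F$ and $\epsilon\le 1$, so you should also set $s_i=0$ off that tail, as you did for the $v_i$, before forming $[(s_i)_i]\in A^{\cU}$.
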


We finish this section with variants of the previous results for (weak) Toeplitz selflessness. The arguments run along similar, but  simpler lines.
\begin{proposition}
If $\phi$ is weakly Toeplitz selfless, then it is 1-step approximately inner.
That is,  $\phi$ is a point-norm limit of cp maps maps of the form
$x\mapsto v^*xv$.
\end{proposition}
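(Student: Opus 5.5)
The plan is to mimic the proof of Theorem \ref{thm:asainner-correspondence} and Theorem \ref{thm:weak-selfless-uniform-asa-inner}. The difference is that we now work inside $\cT(\ell^2(A\otimes_\phi A))$ with creation operators rather than semicirculars. Let $H=A\otimes_\phi A$ and $\xi=\xi_\phi$. For $k\in\N$ let $\xi^{(k)}\in\ell^2(H)$ denote the copy of $\xi$ in the $k$-th summand. Write $\ell_k=\ell_{\xi^{(k)}}\in\cT(\ell^2(H))$. Weak Toeplitz selflessness gives a *-homomorphism $\sigma\colon\cT(\ell^2(H))\to A^{\cU}$ with $\sigma|_A=\Delta_A$.

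The key observation is that a single creation operator already implements $\phi$ exactly, with no averaging needed. In $\cT(H)$ one has $\ell_\xi^*a\ell_\xi=\langle\xi,a\xi\rangle=\phi(a)$ for all $a\in A$, since $\ell_v^*a\ell_w=\kappa_{v,w}(a)\in A$ acting on the Fock module. So we apply $\sigma$ only to $\ell_1$ and set $V=\sigma(\ell_1)\in A^{\cU}$. Because $\sigma$ is a *-homomorphism restricting to $\Delta_A$, we obtain
$$V^*\Delta_A(a)V=\sigma(\ell_1^*a\ell_1)=\sigma(\phi(a))=\Delta_A(\phi(a))$$
for all $a\in A$. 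This makes the Voiculescu-inequality step used in the semicircular case unnecessary, which is why the Toeplitz version yields one-step innerness rather than sums of squares.

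To pass from the ultrapower to $A$, fix a finite set $F\subseteq A$ and $\epsilon>0$. Lift $V$ to a bounded sequence $(v_n)_n$ in $A$; we may take $\|v_n\|\le\|\phi(1)\|^{1/2}$. Then $\lim_{\cU}\|v_n^*av_n-\phi(a)\|=0$ for each $a\in F$. Since $F$ is finite, some index $n$ satisfies $\|v_n^*av_n-\phi(a)\|<\epsilon$ for all $a\in F$ simultaneously. Taking $v=v_n$, the maps $x\mapsto v^*xv$ indexed by pairs $(F,\epsilon)$ converge to $\phi$ in point-norm.

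There is no real obstacle. The only point requiring care is the identity $\ell_\xi^*a\ell_\xi=\phi(a)$ inside the Toeplitz algebra, i.e.\ that the compression is exactly the coefficient map and not merely its vacuum part. This holds because $\ell_\xi^*a\ell_\xi\eta=\langle\xi,a\xi\rangle\eta$ for every $\eta\in\cF(H)$.
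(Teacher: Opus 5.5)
Your proof is correct and is essentially the paper's own argument. Both apply the witnessing $\sigma$ to the creation operator $\ell_1$ of the first copy of $\xi_\phi$, use the identity $\ell_1^*a\ell_1=\phi(a)$, and then pass to a single coordinate of the ultrapower. The paper leaves that last lifting step implicit, and you spell it out.
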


\begin{proof}
Let $\xi_1$ be the first
copy of $\xi_\phi$ in $\ell^2(A\otimes_\phi A)$, and let $\ell_1$ be the
corresponding creation operator in $\cT(\ell^2(A\otimes_\phi A))$. Then $\ell_1^*a\ell_1=\phi(a)$ for all $a\in A$. Applying 
$\sigma\colon \cT(\ell^2(A\otimes_\phi A))\to A^\cU$
witnessing weak Toeplitz selflessness, we get
\[
\sigma(\ell_1)^*a\sigma(\ell_1)=\phi(a)
\]
for all $a\in A$. Thus, $\phi$ is approximately 1-step inner. 
\end{proof}

We have a version of Theorem \ref{thm:cp-map-permanence} for Toeplitz selflessness:

\begin{theorem}\label{thm:toeplitz-cp-map-permanence}
Let $A$ be a unital C*-algebra.		
\begin{enumerate}[(i)]
\item
If $\phi\in CP(A)$ is (completely, weakly) Toeplitz selfless, then so is 
$$
\psi:x\mapsto \sum_{k=1}^n\sum_{i,j=1}^{m_k} b_{k,i}\phi(a_{k,i}^*xa_{k,j})b_{k,j}^*,
$$
where $a_{k,i},b_{k,i}\in A$.

\item	
If $(\phi_i)_i\in CP(A)$ is a net of weakly Toeplitz selfless cp maps, and $\phi_i\to \phi$ in  point-norm, then $\phi$ is weakly Toeplitz selfless. If, in addition, each $\phi_i$ is Toeplitz selfless and $\phi$ is an extended coefficient of $\phi_i$ for all $i$, then $\phi$ is Toeplitz selfless.

\item If $\phi,\psi\in CP(A)$  satisfy that $\phi$ is (weakly) Toeplitz selfless and $\psi$ is  approximately inner, then $\psi\phi$ is (weakly) Toeplitz selfless. 

\item 
If $\phi\in CP(A_1)$ is weakly Toeplitz selfless and $\psi\in CP(A_2)$ is approximately inner,  then $\phi\otimes\psi$ is weakly Toeplitz selfless. If, moreover, $\phi$ is Toeplitz selfless, then $\phi\otimes\psi$ is Toeplitz selfless.

\item 
If $\phi$ is unital and Toeplitz selfless, and $p\in A$ is a projection fixed by $\phi$, then $\phi|_{pAp}\colon pAp\to pAp$ is Toeplitz selfless.
	\end{enumerate}
\end{theorem}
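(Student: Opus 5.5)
The plan is to follow the proof of Theorem \ref{thm:cp-map-permanence} item by item, replacing Theorem \ref{thm:weak-containment-permanence-selfless} by its Toeplitz analogue Theorem \ref{thm:weak-containment-permanence-toeplitz}, Theorem \ref{thm:ultraproduct-permanence-selfless} by its Toeplitz version, and the $\ell^2$-stability Theorem \ref{thm:l2-selfless-correspondence} by Theorem \ref{thm:l2-toeplitz-selfless-correspondence}. The simplification is that there is no real structure to respect, so the vectors we use need not be selfadjoint combinations.

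For (i), set $v_k=\sum_{i=1}^{m_k} b_{k,i}\xi_\phi a_{k,i}^*\in A\otimes_\phi A$. Then
\[
\sum_k\langle v_k,xv_k\rangle=\sum_k\sum_{i,j}b_{k,i}\phi(a_{k,i}^*xa_{k,j})b_{k,j}^*=\psi(x).
\]
By Lemma \ref{lem:singly-generated-embedding}, $\xi_\psi\mapsto(v_1,\dots,v_n)$ gives a covariant embedding $A\otimes_\psi A\hookrightarrow (A\otimes_\phi A)^{\oplus n}$. The coefficient map $\psi$ is visibly an extended coefficient of $\phi$, being a sum of maps $L\circ\phi\circ M_{a,b}$ with $L$ a multiplication map. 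So Theorem \ref{thm:weak-containment-permanence-toeplitz} applies, together with Theorem \ref{thm:l2-toeplitz-selfless-correspondence} to pass from $A\otimes_\phi A$ to its $n$-fold sum. In the completely Toeplitz selfless case there is nothing extra to check, since Toeplitz selfless and completely Toeplitz selfless coincide by the lemma preceding Theorem \ref{thm:l2-toeplitz-selfless-correspondence}.

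For (ii), the point-norm convergence $\phi_i\to\phi$ gives a covariant embedding $A\otimes_\phi A\to\prod_\cU A\otimes_{\phi_i}A$ with $\xi_\phi\mapsto[(\xi_{\phi_i})_i]$ over $\Delta_A$. We then run the proof of Theorem \ref{thm:ultraproduct-permanence-selfless} with $\cT$ in place of $\cS$, using the Toeplitz part of Lemma \ref{lem:toeplitz-exactness}.

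For (iii), first take $\psi$ inner, $\psi(x)=\sum_j b_j^*xb_j$. Then $\psi\phi(x)=\sum_j b_j^*\phi(x)b_j$, which is of the form in (i) with $m_k=1$, $a=1$ and $b=b_j^*$; this is exactly why no selfadjointness or symmetrization is needed here, unlike in Theorem \ref{thm:cp-map-permanence} (iii). For $\psi$ approximately inner, $\psi\phi$ is a point-norm limit of maps $\psi_\lambda\phi$, each (weakly) Toeplitz selfless by (i), and each of the form $\beta\circ\phi$. I expect the main obstacle to be the extended-coefficient hypothesis in (ii): we need $\psi\phi$ to be an extended coefficient of the approximants. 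The way around it is to note that $\psi\phi=\psi\circ\phi$ is itself a post-composition of $\phi$ by a cp map. So rather than going through (ii), apply Theorem \ref{thm:weak-containment-permanence-toeplitz} directly: weak containment $A\otimes_{\psi\phi}A\precsim A\otimes_\phi A$ comes from the inner approximants, and $\psi\circ\phi$ is an extended coefficient of $\phi$.

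For (iv), we first prove $\phi\otimes\id_{A_2}$ is (weakly) Toeplitz selfless. Take $\sigma\colon\cT(\ell^2(H_\phi))\to A_1^\cU$. Since Toeplitz selflessness is automatically complete, $\sigma\otimes\id$ gives a well-defined map into $(A_1\otimes A_2)^\cU$ without any exactness assumption. We then use $\cT(H)\otimes A_2\cong\cT(H\boxtimes A_2)$ (\cite[Lemma 4.6.24]{brown2008textrm}) and $H_\phi\boxtimes A_2\cong H_{\phi\otimes\id}$. The remaining concern is that complete positive existentiality is defined only for Toeplitz selflessness, not for the weak notion; for the weak case I would rerun the argument of that lemma for $\ell^2(H_\phi)$. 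Then $\phi\otimes\psi=(\id\otimes\psi)(\phi\otimes\id)$, where $\id\otimes\psi$ is approximately inner, and (iii) finishes.

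For (v), we have $\cT(pAp\otimes_{\phi|}pAp)\cong C^*(pAp,p\ell_\phi p)\subseteq p\,\cT(A\otimes_\phi A)\,p$. Compressing the witness $\sigma$ by $p$ then lands in $(pAp)^\cU$ and intertwines $\phi$, using $\phi(p)=p$.
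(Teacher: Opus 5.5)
Your proposal is correct and matches the paper's intended argument. The paper gives no separate proof; it only says to rerun Theorem \ref{thm:cp-map-permanence} with the Toeplitz analogues of the weak-containment, ultraproduct and $\ell^2$ results, using automatic completeness of Toeplitz selflessness to drop exactness in (iv), which is what you do. One slip in (i): since $\langle a\xi_\phi b^*, x\,a'\xi_\phi (b')^*\rangle=b\,\phi(a^*xa')\,(b')^*$, your vector $v_k=\sum_i b_{k,i}\xi_\phi a_{k,i}^*$ gives the map with $a$ and $b$ interchanged, so take $v_k=\sum_i a_{k,i}\xi_\phi b_{k,i}^*$ instead.
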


In the case of Toeplitz selflessness,   the criterion  
Proposition \ref{prop:cp-map-selflessness-criterion} takes on a much simpler form: 
\begin{proposition}
A cp map $\phi\in CP(A)$ is Toeplitz selfless if and only if there exists  $v\in A^{\cU}$ such that 
$v^*av=\phi(a)$ and $\phi^{\cU}(avv^*a^*)=0$ for all $a\in A$.
\end{proposition}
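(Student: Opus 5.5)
We prove the two implications separately. The forward direction extracts $v$ from a witnessing homomorphism. The reverse direction runs the argument of Proposition \ref{prop:cp-map-selflessness-criterion} with $C=A$, where condition (3) becomes vacuous.

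\emph{Forward direction.} Suppose $\phi$ is Toeplitz selfless. Then there is a $*$-homomorphism $\sigma\colon \cT(A\otimes_\phi A)\to A^{\cU}$ such that $\sigma|_A=\Delta_A$ and $\phi^{\cU}\circ\sigma=\Delta_A\circ\phi\circ E$. Set $v=\sigma(\ell_\phi)$.
\begin{itemize}
\item Since $\ell_\phi^*a\ell_\phi=\langle\xi_\phi,a\xi_\phi\rangle=\phi(a)$ in $\cT(A\otimes_\phi A)$, applying $\sigma$ gives $v^*av=\phi(a)$.
\item The vacuum vector is killed by $\ell_\phi^*$, so $E(a\ell_\phi\ell_\phi^*a^*)=P_0a\ell_\phi\ell_\phi^*a^*P_0=0$. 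Hence $\phi^{\cU}(avv^*a^*)=\phi^{\cU}(\sigma(a\ell_\phi\ell_\phi^*a^*))=\Delta_A(\phi(E(a\ell_\phi\ell_\phi^*a^*)))=0$.
\end{itemize}

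\emph{Reverse direction.} Suppose $v\in A^{\cU}$ satisfies both conditions.
\begin{itemize}
\item By Lemma \ref{lem:singly-generated-embedding}, the assignment $\xi_\phi\mapsto v$ with underlying map $\Delta_A$ defines a representation of $A\otimes_\phi A$ in $A^{\cU}$. The key identity is $\Delta_A(\langle\xi_\phi,a\xi_\phi\rangle)=v^*\Delta_A(a)v$.
\item The universal property of $\cT(A\otimes_\phi A)$ then gives $\sigma\colon\cT(A\otimes_\phi A)\to A^{\cU}$ with $\sigma|_A=\Delta_A$ and $\sigma(\ell_\phi)=v$.
\item Put $\tilde\phi=\phi^{\cU}\circ\sigma$, a cp map $\cT(A\otimes_\phi A)\to A^{\cU}$. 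By the second hypothesis, $\tilde\phi(a\ell_\phi\ell_\phi^*a^*)=0$ for all $a\in A$.
\item The Cauchy--Schwarz inequality for cp maps, $\|\tilde\phi(y^*x)\|^2\le\|\tilde\phi(y^*y)\|\,\|\tilde\phi(x^*x)\|$ with $y^*=a\ell_\phi$, gives $\tilde\phi(a\ell_\phi x)=0$. Taking adjoints gives $\tilde\phi(x\ell_\phi^*a)=0$, for all $a\in A$ and $x\in\cT(A\otimes_\phi A)$.
\end{itemize}

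\emph{Conclusion of the reverse direction.} The algebra $\cT(A\otimes_\phi A)$ is the closed span of elements of the form $a_0$ and of words in $A$, $\ell_\phi$, $\ell_\phi^*$. Any word containing an $\ell_\phi$ or $\ell_\phi^*$ can be written with a first occurrence $a\ell_\phi x$ or a last occurrence $x\ell_\phi^* a$, allowing $a=1$ or $x=1$. So $\tilde\phi$ vanishes on every word containing a creation or annihilation operator.

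This vanishing is valid for every word, since every $\ell$-word has a leftmost letter $\ell_\phi$ or $\ell_\phi^*$. If the leftmost is $\ell_\phi$, use $a\ell_\phi x$. If it is $\ell_\phi^*$, then the word has the form $a\ell_\phi^*y$. Look at the rightmost letter instead: if it is $\ell_\phi^*$, use $x\ell_\phi^*a$; if it is $\ell_\phi$, write the word as $z\ell_\phi b$. So the only problematic words are of the form $a\ell_\phi^*\cdots\ell_\phi b$, and these we handle directly.

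Consider a word $a\ell_\phi^*\,y\,\ell_\phi b$. Using the relation $\ell_\phi^*c\ell_\phi=\phi(c)$ for $c\in A$ together with $\ell_\phi^*\ell_\phi$-type reductions, we reduce such words. More cleanly, it suffices to note that $E$ also vanishes on all words containing an $\ell$-letter after normal ordering. Every word reduces, by $\ell_\xi^*c\ell_\eta=\langle\xi,c\eta\rangle$, to a normally ordered combination $\ell_\xi\, c\,\ell_\eta^*$ plus elements of $A$. Both $\tilde\phi$ and $\Delta_A\circ\phi\circ E$ kill normally ordered terms with a nontrivial creation or annihilation part: $\tilde\phi$ by the Cauchy--Schwarz step, and $E$ by definition of the vacuum expectation. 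On $A$, both equal $\Delta_A\circ\phi$.

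Hence $\phi^{\cU}\circ\sigma=\Delta_A\circ\phi\circ E$. This is exactly the positively existential embedding $(A;\phi)\to(\cT(A\otimes_\phi A);\phi\circ E)$, so $\phi$ is Toeplitz selfless.

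The main obstacle is this final density and normal-ordering step, identifying $\tilde\phi$ with $\Delta_A\circ\phi\circ E$. I would handle it via the dense span of the $\ell_\xi a\ell_\eta^*$, exactly as in Lemma \ref{lem:toeplitz-exactness}.
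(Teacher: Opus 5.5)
Your proof is correct and follows the paper's route: in the forward direction you take $v=\sigma(\ell_\phi)$, and the reverse direction is the proof of Proposition \ref{prop:cp-map-selflessness-criterion} with $C=A$ and $\psi=\phi$, where condition (3) disappears because the whole Toeplitz algebra already maps into $A^{\cU}$. Your normal-ordering step correctly supplies the detail the paper leaves implicit, namely identifying $\tilde\phi$ with $\Delta_A\circ\phi\circ E$; however, delete the earlier claim that $\tilde\phi$ kills every word containing an $\ell$-letter, which is false as stated (e.g.\ $\ell_\phi^*\ell_\phi=\phi(1)$).
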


\section{Some regularity properties}\label{sec: regularity}
In this section we explore some regularity properties of $A$ when  $(A,\phi)$ is weakly selfless. The situation is most satisfactory in the case of Toeplitz selflessness, so we deal with this case first.

In \cite{KirRorNonsimpleOinfty} Kirchberg and R{\o}rdam introduce the notion of strongly purely infinite C*-algebra.
A C*-algebra $A$
is strongly purely infinite if for all $a_1,a_2\in A_+$  and 
$\epsilon>0$ there exist $d_1,d_2\in A$ such that 
$$
\|d_1^*a_1d_1-a_1\|<\epsilon, \quad \|d_2^*a_2d_2-a_2\|<\epsilon, \quad
\|d_2^*a_2^{\frac12}a_1^{\frac12}d_1\|<\epsilon.
$$
This is a strengthening of the notion of purely infinite C*-algebra in the non-simple case. By \cite[Theorem 4.16]{KirRorNonsimpleOinfty}, if $A$ is separable and nuclear,
then it is strongly purely infinite if and only if
$A\otimes \mathcal O_\infty\cong A$.

\begin{theorem}
If $(A,\phi)$ is weakly Toeplitz selfless and $\mathrm{Ideal}(\phi(a))=\mathrm{Ideal}(a)$ for all  $a\in A_+$,  then $A$ is strongly purely infinite.
\end{theorem}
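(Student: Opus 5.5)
The plan is to realize the elements $d_1,d_2$ inside the ultrapower $A^{\cU}$ using images of mutually orthogonal creation operators, and then lift them to $A$.

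First, I fix a witness of weak Toeplitz selflessness: a *-homomorphism $\sigma\colon \cT(\ell^2(A\otimes_\phi A))\to A^{\cU}$ with $\sigma|_A=\Delta_A$. For $k\in\N$, let $\xi_k$ be the copy of $\xi_\phi$ in the $k$-th summand of $\ell^2(A\otimes_\phi A)$, let $\ell_k$ be the corresponding creation operator, and set $w_k=\sigma(\ell_k)\in A^{\cU}$. In $\cT(\ell^2(A\otimes_\phi A))$ we have $\ell_k^*a\ell_m=\langle \xi_k,a\xi_m\rangle=\delta_{km}\phi(a)$ for all $a\in A$, because distinct summands are orthogonal. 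Applying $\sigma$ gives
$$
w_k^*aw_m=\delta_{km}\phi(a)\qquad (a\in A,\ k,m\in\N),
$$
where $A$ is identified with its diagonal copy in $A^{\cU}$. This is the only consequence of weak Toeplitz selflessness I intend to use: infinitely many "isometries relative to $\phi$" with mutually orthogonal ranges, whose orthogonality persists after multiplying by any element of $A$ in between.

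Second, I use the ideal hypothesis. Fix $a_1,a_2\in A_+$ and $\epsilon>0$. Since $a_i\in\mathrm{Ideal}(\phi(a_i))$, the standard Kirchberg--R{\o}rdam/Pedersen lemma for closed ideals gives, for any $\delta>0$, finitely many $y_{i,1},\dots,y_{i,m}\in A$ with
$$
(a_i-\delta)_+=\sum_{j=1}^{m} y_{i,j}^*\phi(a_i)y_{i,j}.
$$
If one only has approximate equality, that also suffices. I take the same $m$ for both $i$ by padding with zeros.

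Third, I choose disjoint indices for the two elements and define
$$
D_1=\sum_{j=1}^m w_{j}\,y_{1,j},\qquad D_2=\sum_{j=1}^m w_{m+j}\,y_{2,j}\in A^{\cU}.
$$
By the orthogonality relations, the cross terms vanish, so
$$
D_i^*a_iD_i=\sum_{j,j'}y_{i,j}^*w_{\cdot}^*a_iw_{\cdot}y_{i,j'}=\sum_j y_{i,j}^*\phi(a_i)y_{i,j}=(a_i-\delta)_+ .
$$
In the same way, $D_2^*a_2^{1/2}a_1^{1/2}D_1=0$, since every term has the form $y^*w_{m+j}^*\,(a_2^{1/2}a_1^{1/2})\,w_{j'}y'$ with $m+j\neq j'$. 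Choosing $\delta$ with $\|(a_i-\delta)_+-a_i\|<\epsilon/2$, the three required inequalities hold exactly up to $\epsilon/2$ in $A^{\cU}$. I then lift $D_1,D_2$ to representing sequences and pick a single coordinate where all three (finitely many) norm inequalities hold with error $<\epsilon$. This yields $d_1,d_2\in A$ as in the definition, so $A$ is strongly purely infinite.

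The main obstacle I expect is the second step: passing from the ideal equality $\mathrm{Ideal}(\phi(a))=\mathrm{Ideal}(a)$ to an explicit expression $(a-\delta)_+=\sum_j y_j^*\phi(a)y_j$ with a finite sum. I plan to quote the known lemma that if $a\in\overline{AbA}$ with $a,b\geq0$, then $(a-\delta)_+=\sum_{j=1}^m z_j^*bz_j$ for some $z_j\in A$. If only an approximate version is available, the errors are absorbed into $\epsilon$ by the estimate $\|D_i\|\le\sum_j\|\phi\|^{1/2}\|y_{i,j}\|$, which is a fixed bound once the $y_{i,j}$ are chosen. Once this step is in place, the rest is the orthogonality bookkeeping above.
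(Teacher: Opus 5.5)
Your proof is correct and is essentially the paper's argument: orthogonal copies of $\xi_\phi$ in $\ell^2(A\otimes_\phi A)$ give $\ell_k^*a\ell_m=\delta_{km}\phi(a)$ for $a\in A$, and the ideal hypothesis recovers $a_i$ from finitely many compressions of $\phi(a_i)$. One then pushes through $\sigma$ and passes to a single coordinate. The only difference is organizational: the paper first proves $a\precsim\phi(a)$ in $\cT(\ell^2(A\otimes_\phi A))$ using $a\in\mathrm{Ideal}(\phi^2(a))$ and then sets $d_i=\ell_ix_i$, whereas your single layer $D_i=\sum_j w_jy_{i,j}$ with disjoint index blocks uses $a_i\in\mathrm{Ideal}(\phi(a_i))$ directly and is slightly more economical.
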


\begin{proof}
We start working in 
$
C=\cT(\ell^2(A\otimes_\phi A))
$
then pass to $A^{\cU}$ via $\sigma\colon C\to A^{\cU}$ such that $\sigma|_A=\Delta_A$. 

Let us first show that $a\precsim \phi(a)$, in the Cuntz preorder of $C$, for all $a\in A_+$. Let $a\in A_+$ and $\epsilon>0$. 
By assumption,   $a$ belongs to the ideal generated by $\phi^{2}(a)$. Thus,
$$
\Big\|a-\sum_{i=1}^n x_i^*\phi^{2}(a)x_i\Big\|<\epsilon,
$$
for some $x_i\in A$. Now choose left creation operators  $\ell_1,\ldots,\ell_n\in C$
corresponding to pairwise orthogonal copies $\xi_\phi^{(1)},\ldots, \xi_\phi^{(n)}$
of the canonical generator $\xi_\phi\in A\otimes_\phi A$.  Set $x=\sum_{i=1}^n \ell_ix_i$. Then 
$$
x^*\phi(a)x=\sum_{i=1}^n x_i^*\ell_i^*\phi(a)\ell_ix_i=\sum_{i=1}^n x_i^*\phi^2(a)x_i, 
$$
where we have used that $\ell_i^*b\ell_j=0$ if $i\neq j$ and $b\in A$. Thus,
$\|a-x^*\phi(a)x\|<\epsilon$. This shows that $a\precsim \phi(a)$.

Let $a_1,a_2\in A_+$ and $\epsilon>0$.  Choose left creation operators  $\ell_1,\ell_2\in C$ as before. Then,
$$
\ell_1^*a_1\ell_1 =\phi(a_1),\quad
\ell_2^*a_2\ell_2 =\phi(a_2),\quad
\ell_2^*a_2^{\frac12}a_1^{\frac12}\ell_1=0.
$$
Since $a_1\precsim \phi(a_1)$, we can find $x_1\in C$ such that 
$\|a_1-x_1^*\phi(a_1)x_1\|<\epsilon$, and similarly we find $x_2\in C$ such that $\|a_2-x_2^*\phi(a_2)x_2\|<\epsilon$. 
Set $d_1=\ell_1x_1$ and $d_2=\ell_2x_2$. Then 
$$
\|d_1^*a_1d_1-a_1\|<\epsilon, \quad \|d_2^*a_2d_2-a_2\|<\epsilon, \quad
d_2^*a_2^{\frac12}a_1^{\frac12}d_1=0.
$$
Now we pass to $A^{\cU}$ via $\sigma$, and then to $A$ by projecting onto a suitable coordinate.
\end{proof}

The next theorem shows that if $(A,\phi)$ is weakly selfless, 
then the set $\overline{[A,A]}$ is definable in the model theory of $A$;
see \cite{FHLRTVW2021modelthryCstar}.
It also shows that $A^{\cU}$ has no ``phantom'' traces. More concretely, for every  tracial state $\tau$ on $A^{\cU}$ and separable C*-subalgebra $D\subseteq A^{\cU}$, there exist tracial states $(\tau_i)_i$ on $A$ such that  $\tau$ agrees with the limit trace $\lim_{\cU}\tau_i$ on $D$; 
see \cite[Theorem 8]{Ozawa2013Dixmier} and \cite{AntoinePereraRobertThiel2024}.

\begin{theorem}\label{thm:no-phantom}
If $(A,\phi)$ is weakly selfless and $\phi$ is unital, then  $\overline{[A^{\cU},A^{\cU}]}=\overline{[A,A]}^{\cU}$.
\end{theorem}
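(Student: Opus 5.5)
The inclusion $\overline{[A^{\cU},A^{\cU}]}\subseteq\overline{[A,A]}^{\cU}$ is formal. A commutator $[x,y]$ in $A^{\cU}$ is represented by a sequence of commutators $[x_n,y_n]$ in $A$, and $\overline{[A,A]}^{\cU}$ is a closed subspace of $A^{\cU}$. So the content is the reverse inclusion.

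I plan to reduce the reverse inclusion to a uniform commutator estimate in $A$, as in \cite[Theorem 8]{Ozawa2013Dixmier}. That is, I want to show that for every $\epsilon>0$ there are $M\in\N$ and $R>0$, depending only on $\epsilon$, such that every $a\in\overline{[A,A]}$ with $\|a\|\le 1$ lies within $\epsilon$ of a sum $\sum_{j=1}^M[x_j,y_j]$ with $\|x_j\|\|y_j\|\le R$. Granting this, an element $[(a_n)_n]$ of $\overline{[A,A]}^{\cU}$ with $\|a_n\|\le 1$ is within $\epsilon$ of $\sum_{j\le M}[[(x_{j,n})_n],[(y_{j,n})_n]]$. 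Here the sequences are bounded after rescaling $x_{j,n},y_{j,n}$ to have equal norms, so this sum is a commutator sum in $A^{\cU}$, and $\epsilon$ is arbitrary.

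The source of uniformity is Theorem \ref{thm:weak-selfless-uniform-asa-inner}. For $\delta>0$ it gives an $N=N(\delta)$, independent of the finite set, and selfadjoint $h_1,\dots,h_N$ with $\|\phi(b)-\sum_i h_ibh_i\|\le\delta\|b\|$ on any prescribed finite set. Since $\phi$ is unital, I include $1$ in the finite set to get $\|\sum_ih_i^2-1\|\le\delta$, hence $\|h_i\|\le(1+\delta)^{1/2}$. Then the identity
$$
\sum_i h_i^2a-\sum_i h_iah_i=\sum_{i=1}^N[h_i,h_ia]
$$
shows that $a-\phi(a)$ is within $2\delta\|a\|$ of a sum of $N$ commutators, each of norm product at most $(1+\delta)\|a\|$. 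Iterating and averaging (using Corollary \ref{cor:cp-map-powers-and-cesaro-limits}, so that each $\phi^k$ is again weakly selfless) makes $a-\frac1K\sum_{k=1}^K\phi^k(a)$ a uniformly controlled commutator sum, with the count and bounds depending only on $K$ and $\delta$.

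The main obstacle is the remaining term, namely showing that the averages $\frac1K\sum_k\phi^k(a)$, or a free analogue of them, are uniformly small for $a\in\overline{[A,A]}$ of norm at most $1$. Here I would use the free structure directly, not just a.s.a. innerness. In $C=\cS(\ell^2(A\otimes_\phi A),\ell^2(\std))$, Voiculescu's inequality gives $\frac1N\sum_{k}s^{(k)}as^{(k)}\approx\phi(a)$ at rate $O(N^{-1/2})\|a\|$, uniformly in $a$. For a commutator $a=[x,y]$ with $x,y\in A$, I would compare $\frac1N\sum_k s^{(k)}xs^{(k)}y$ and $\frac1N\sum_k ys^{(k)}xs^{(k)}$, using freeness of the copies with amalgamation over $A$ to show that the error terms are centered free sums of norm $O(N^{-1/2})$. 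This would exhibit $\phi([x,y])-[\phi(x),y]$-type corrections as short commutator sums in $C$ with uniform bounds. I would then pull these back to $A^{\cU}$ via the $\sigma$ witnessing weak selflessness, and then to a coordinate of $A$, exactly as in the proof of Theorem \ref{thm:asainner-correspondence}.

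If this direct route stalls, the fallback is dual. I would show that every bounded tracial functional $\tau$ on $A^{\cU}$ vanishes on $\overline{[A,A]}^{\cU}$, since by Hahn--Banach/Cuntz--Pedersen this is equivalent to the desired equality. To do so, I would compose $\tau$ with $\sigma$ to get a trace on $C$ extending $\tau|_A$, and use the above free averaging to show that $\tau$ restricted to $A^{\cU}$ is approximated by limits of traces on $A$.
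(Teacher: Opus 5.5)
Your reduction to a uniform commutator estimate is correct, and so is your treatment of $a-\phi(a)$ via the uniform a.s.a.\ inner property (putting $1$ in the finite set and using unitality). This is essentially the first half of the paper's proof. The gap is in the remaining term.

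Your primary target is false in general. You want $\frac1K\sum_{k=1}^K\phi^k(a)$ to be uniformly small on the unit ball of $\overline{[A,A]}$. But if $\phi=\mathrm{id}_A$, which the paper notes can be weakly selfless, this average is $a$ itself. Similarly, for a non-tracial selfless state $\rho$ on a purely infinite simple $A$, one has $1\in\overline{[A,A]}=A$ while $\rho^k(1)=1$. So the a.s.a.\ inner step only ever produces $a-\phi(a)$ (or $a-\phi^k(a)$). The complementary part has to be shown to be a \emph{short commutator sum}, not to be small.

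Your ``free analogue'' sketch does not supply this. It treats a single commutator $a=[x,y]$, whereas the whole difficulty is that a general $a\in\overline{[A,A]}$ needs an unbounded number $n$ of commutators. These must be compressed into a number depending only on the error. Comparing $\frac1N\sum_k s^{(k)}xs^{(k)}y$ with $\frac1N\sum_k ys^{(k)}xs^{(k)}$ just re-expresses $[\phi(x),y]$, already a commutator, and achieves no such compression.

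The missing idea, as in the paper, runs as follows.
\begin{enumerate}[(a)]
\item By \cite[Theorem 5]{Ozawa2013Dixmier}, it suffices to treat selfadjoint $a=\sum_{i=1}^n[x_i^*,x_i]$ with $\sum_i\|x_i\|^2\le M_1\|a\|$, and to approximate $\frac12(a+\phi(a))$.
\item Take semicirculars $s_{ij}$ attached to pairwise orthogonal copies of $\xi_\phi$, and set $X_j=\sum_i s_{ij}x_i$ and $\widetilde X_j=\sum_i x_is_{ij}$.
\item Orthogonality kills the cross terms in expectation, and $E(s_{ij}^2)=\phi(1)=1$. Hence $E([X_j^*,X_j]+[\widetilde X_j^*,\widetilde X_j])=a+\phi(a)$, so $n$ commutators collapse into two.
\item Voiculescu's inequality plus the norm control on the $x_i$ gives $\|X_j\|,\|\widetilde X_j\|\le M_2\|a\|^{1/2}$, independently of $n$.
\item Averaging over $R$ free copies indexed by $j$ turns this expectation identity into a norm approximation of $a+\phi(a)$ by $2R$ commutators.
\item Adding your estimate for $a-\phi(a)$, and pushing through $\sigma$ and down to a coordinate, gives the uniform estimate.
\end{enumerate}

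Your Hahn--Banach fallback does not avoid this. It is merely equivalent to the statement. Composing $\tau$ with $\sigma$ only sees $\sigma(C)$, which contains the diagonal copy of $A$. So it cannot control $\tau$ on non-diagonal elements of $\overline{[A,A]}^{\cU}$ without exactly this uniform estimate.
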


\begin{proof}
It is enough to show that for every $\gamma>0$ there exist $N\in\N$ and $M>0$ such that, for every $a\in\overline{[A,A]}$, there exist $x_i,y_i\in A$, $i=1,\ldots,N$, satisfying
\begin{equation}\label{approxcomm}
	\Big\|a-\sum_{i=1}^N[x_i,y_i]\Big\|\leq\gamma\|a\|,
	\qquad
	\|x_i\|\cdot \|y_i\|\leq M\|a\|.
\end{equation}
Here $N$ and $M$ may depend on $\gamma$, but not on $a$. See \cite[Theorem A]{AntoinePereraRobertThiel2024}.
We will prove the required approximation for $a\in\overline{[A,A]}\cap A_{\mathrm{sa}}$. For a general $a\in\overline{[A,A]}$, write
$$
a=\operatorname{Re}(a)+i\operatorname{Im}(a).
$$
Both terms belong to $\overline{[A,A]}\cap A_{\mathrm{sa}}$ and have norm at most $\|a\|$. Applying the selfadjoint case to each term with error factor $\gamma/2$ and adding the resulting approximations gives \eqref{approxcomm} for $a$.

Let $\gamma>0$ and $a\in \overline{[A,A]}\cap A_{\mathrm{sa}}$.
By the uniform form of the a.s.a. inner property of $\phi$ from Theorem \ref{thm:weak-selfless-uniform-asa-inner} applied with $\epsilon=\gamma/2$ and  $F=\{1,a\}$,
there exist selfadjoint elements $x_1,\ldots,x_N\in A$ satisfying
$$
\Big\|\phi(a)-\sum_{i=1}^{N}x_iax_i\Big\|\leq \frac{\gamma}{2}\|a\|, \qquad \Big \|1 -\sum_{i=1}^N x_i^2\Big\| \leq \frac{\gamma}{2}.
$$
Here $N$ depends on $\gamma$ but is independent of $a$. 
Since
\begin{align*}
a-\phi(a)=a-\sum_{i=1}^{N} x_i^2a + \sum_{i=1}^{N} [x_i,ax_i] + \sum_{i=1}^{N} x_iax_i-\phi(a),
\end{align*}
we have
$$
\Big\| (a- \phi(a)) - \sum_{i=1}^N [x_i,ax_i]\Big\|\leq \Big\|1-\sum_{i=1}^N x_i^2\Big\|\cdot \|a\| + \frac{\gamma}{2}\|a\|
\leq \gamma\|a\|.
$$
Hence,
$$
\Big\| \frac12(a- \phi(a)) - \frac12\sum_{i=1}^N [x_i,ax_i]\Big\|\leq \frac{\gamma}{2}\|a\|.
$$
Notice that $\|x_i\|\cdot \|ax_i\|\leq \|x_i\|^2\cdot \|a\|\leq (1+\gamma/2)\|a\|$.

We will be done once we have shown how to approximate
$\frac12(a+\phi(a))$ by a uniformly bounded number of commutators
with error at most $\frac{\gamma}{2}\|a\|$.
Adding the approximations of $\frac12(a- \phi(a))$ and $\frac12(a+\phi(a))$ then gives \eqref{approxcomm}.

The next part of the proof is inspired by the proof of  \cite[Theorem 6]{Ozawa2013Dixmier} (but we use freeness in place of commutativity in some key places). By \cite[Theorem 5]{Ozawa2013Dixmier}, there exists a universal constant $M_1>0$ such that elements of the form
$$
a=\sum_{i=1}^n [x_i^*,x_i],
$$
where $x_i\in A$ satisfy that   $\sum_{i=1}^n \|x_i\|^2\leq M_1\|a\|$, form a  dense set in $\overline{[A,A]}\cap A_{sa}$. 
Since $\frac12(\mathrm{id}_A+\phi)$ is contractive, density shows that
it is enough to prove the required approximation of  $\frac12(a+\phi(a))$ for elements $a$ of the form
$$
a=\sum_{i=1}^n[x_i^*,x_i],
\qquad
\sum_{i=1}^n\|x_i\|^2\leq M_1\|a\|.
$$
Let us assume that $a$ has this form.

We now work in the C*-algebra
$$
C=\cS(\ell^2(A\otimes_\phi A), \ell^2(\std)),
$$
and let $E\colon C\to A$ denote the expectation onto $A$. 
Choose a doubly indexed collection of semicircular elements  $s_{ij}\in C$ 
associated to pairwise orthogonal copies of the canonical generator $\xi_\phi\in A\otimes_\phi A$,  where $i=1,\ldots,n$ and $j=1,\ldots,R$ ($R$ is chosen below). For $j=1,\ldots,R$, set
$$
X_j=\sum_{i=1}^n s_{i,j}x_i,
\qquad
\widetilde X_j=\sum_{i=1}^n x_i s_{i,j}.
$$
Observe that $E(s_{ij}x_i)=0$. Hence, $E(X_j)=0$ and similarly, $E(\widetilde X_j)=0$. 
We can estimate the  norms  
of $X_j,\tilde X_j$ using Voiculescu's inequality:
\begin{align*}
\|X_j\| \leq \sup_i \|s_{ij}x_i\|+\Big\|\sum_{i=1}^n x_i^*x_i\Big\|^{\frac12}
+\Big\|\sum_{i=1}^n x_ix_i^*\Big\|^{\frac12}\leq M_2\|a\|^{\frac12},
\end{align*}
for a universal constant $M_2$.
Similarly, $\|\widetilde X_j\|\leq M_2\|a\|^{\frac12}$.

We have
$$
[X_j,X_j^*]=\sum_{i=1}^n [s_{i,j}x_i,x_i^*s_{i,j}]+\sum_{i\neq i'} [s_{i,j}x_i,x_{i'}^*s_{i',j}].
$$
Let us apply $E$ on both sides. On the right-hand side, we use  
that the terms of the second sum have zero expectation and that
$$
E([s_{ij}x_i,x_i^*s_{i,j}])=E(s_{ij}x_ix_i^*s_{ij})-E(x_i^*(s_{ij})^2x_i)=\phi(x_ix_i^*)-x_i^*x_i.
$$ 
This yields
$$
E([X_j,X_j^*])=\sum_{i=1}^n \phi(x_ix_i^*)- x_i^*x_i.
$$
A similar calculation shows that
$$
E([\widetilde X_j,\widetilde X_j^*])=\sum_{i=1}^n x_ix_i^*- \phi(x_i^*x_i).
$$
We deduce that
$$
E([X_j^*,X_j]+[\widetilde X_j^*,\widetilde X_j])= a+\phi(a).
$$
As $j$ varies, the centered elements 
$$
[X_j^*,X_j]+[\widetilde X_j^*,\widetilde X_j] - (a+\phi(a))
$$
belong to freely independent factors of $C$. Applying Voiculescu's inequality,
we get
\begin{align*}
\Big\|
\frac1R\sum_{j=1}^R ([X_j^*,X_j]+[\widetilde X_j^*,\widetilde X_j]) - (a+\phi(a))
\Big\|
&\leq 
\Big(\frac2R + \frac2{\sqrt R}\Big)\sup_j \|[X_j^*,X_j]+[\widetilde X_j^*,\widetilde X_j]\|\\
&\leq \Big(\frac2R + \frac2{\sqrt R}\Big)M_3\|a\|.
\end{align*}
Choosing $R$ sufficiently large, and absorbing the coefficient
$1/(2R)$ into the entries of each commutator, gives an approximation
of the form \eqref{approxcomm} for $\frac12(a+\phi(a))$, with constants
independent of $a$.

To finish the proof, we use that by  weak selflessness of $\phi$ there exists
a *-homomorphism $\sigma\colon C\to A^{\cU}$ that restricts to the diagonal inclusion on $A$.
Applying $\sigma$ gives the required approximation in $A^\cU$.
Lifting the finitely many entries of the commutators and passing to a suitable coordinate gives the corresponding approximation in $A$.
\end{proof}

\section{Selfless operator-valued $C^*$-probability spaces}\label{sec: expectations}

Let $A$ be a unital C*-algebra with unital C*-subalgebra $B$ and  expectation $E\colon A\to B$. We call $(A,E,B)$ a C*-probability space. We assume that the GNS representation of $A$ in $L^2(A,E)$ induced by $E$ is faithful (except for ultrapowers/ultraproducts); this ensures that the canonical maps into reduced amalgamated free products are faithful. By a morphism between C*-probability spaces we understand a unital *-homomorphism between the C*-algebras that intertwines the expectations.  

We use the reduced amalgamated free product construction of $C^*$-probability
spaces with conditional expectations; see
\cite[Chapter 4]{voiculescu1992free}.

\begin{definition}\label{def:selfless-cstar-probability-space}
We call $(A,E,B)$ selfless if for some C*-probability space $(C,\kappa)$, with $C\neq \C$ and $\kappa$ a state inducing a faithful GNS,
the first factor  embedding 
$$
A\to A*_B (B\otimes C)
$$
is an existential embedding of C*-probability spaces. More concretely, 
there exists an embedding $\sigma\colon A*_B (B\otimes C)\to A^\cU$  whose restriction to $A$ agrees with the diagonal embedding $\Delta_A$ and such that
$$
E^{\cU}\circ \sigma = \Delta_A \circ (E*_B (\mathrm{id}_B\otimes\kappa)).
$$
We call $(A,E,B)$ Toeplitz selfless if $(C,\kappa)$ may be chosen  with a non-tracial $\kappa$.
\end{definition}	

In the following lemma we denote by $C^*(A,\ell_E)$
the Toeplitz-Pimsner C*-algebra of $A\otimes_E A$, i.e., 
$\cT(A\otimes_E A)$, and by $E_{\Omega}$ its vacuum expectation.
As in the previous section, we write
$C^*(A,s_E)$ for  $\cS(A\otimes_E A,\std)$.
The isomorphisms of this lemma are well known. The first isomorphism appears in
\cite[Section 4.6, exercises]{brown2008textrm} and in \cite[Section 2.5]{BrownDykemaShlyakhtenko2002}, where it is attributed to
Shlyakhtenko \cite{Shlyakhtenko1998}. 
See also \cite[Lemma 12]{ozawa2025proximalityselflessnessgroupcalgebras}.
The second isomorphism follows from the first.

\begin{lemma}\label{lem:prob-toeplitz-semicircular-identification} 
Let $(A,E,B)$ be a C*-probability space.
Let $(\cT,\omega)$ be the Toeplitz C*-algebra endowed with the vacuum state, and denote by $v\in \cT$ the generating isometry. 
We have an isomorphism of $A$-valued C*-probability spaces:
\begin{align*}
(C^*(A,\ell_E),E_\Omega,A) \cong 
(A*_B (B\otimes \cT),\mathrm{id}_A*_B (\mathrm{id}_B\otimes \omega),A),
\end{align*}
mapping the left creation operator $\ell_{E}$ to $1\otimes v$. Restricting this isomorphism to $C^*(A,s_E)$ yields 
the isomorphism
$$
(C^*(A,s_E),E_\Omega,A) \cong 
(A*_B (B\otimes C([-2,2])),\mathrm{id}_A*_B (\mathrm{id}_B\otimes \lambda),A),
$$
where the state on $C([-2,2])$ is induced by the semicircular distribution.
\end{lemma}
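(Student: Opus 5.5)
To prove the first isomorphism, I would construct mutually inverse expectation-preserving *-homomorphisms, using the universal property of the Toeplitz-Pimsner algebra in one direction and faithfulness of GNS representations in the other.

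\textbf{From $C^*(A,\ell_E)$ to the free product.} Set $D=A*_B(B\otimes\cT)$ with expectation $F=\mathrm{id}_A*_B(\mathrm{id}_B\otimes\omega)$, and put $w=1\otimes v\in D$. First compute $w^*aw$ for $a\in A$. Write $a=E(a)+(a-E(a))$. Since $B\otimes\cT\ni 1\otimes v$ commutes with $B$, we get $w^*E(a)w=E(a)w^*w=E(a)$. For the centered part, $w^*(a-E(a))w$ is an alternating word whose inner entry is centered in $A$. Its outer entries $v^*$ and $v$ are centered for $\omega$ but are not sent to zero, so a direct free-moment argument does not immediately give $w^*(a-E(a))w=0$ as an operator.

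I would instead argue on the free product Hilbert module $L^2(D,F)$. This module is a direct sum of alternating tensors of $\mathring{L^2}(A,E)$ and $\mathring{L^2}(B\otimes\cT)=B\otimes\mathring{\ell^2}(\N)$. The isometry $v$ shifts $\delta_n\mapsto\delta_{n+1}$, so $w$ maps every vector into the part of the module that begins with a $B\otimes\mathring{\ell^2}$ letter. A centered $a$ sends such vectors to vectors beginning with an $\mathring{A}$ letter, which are orthogonal to the range of $w$. Hence $w^*(a-E(a))w=0$, and therefore $w^*aw=E(a)$ in $D$.

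By Lemma~\ref{lem:singly-generated-embedding}, the assignment $\xi_E\mapsto w$ then gives a representation of $A\otimes_E A$ in $D$. The universal property of $\cT(A\otimes_E A)$ yields $\pi\colon C^*(A,\ell_E)\to D$ with $\pi(\ell_E)=w$ and $\pi|_A=\mathrm{id}$.

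\textbf{Expectations and isometry.} Next I check $F\circ\pi=E_\Omega$ on the spanning elements $a_0\ell_E a_1\cdots\ell_E a_n\ell_E^*\cdots$. These elements have the form $\ell_\xi\ell_\eta^*$ with $\xi,\eta$ elementary tensors. The vacuum expectation kills them unless $\xi,\eta$ both have length $0$. Likewise $F$ kills the corresponding words, because $F(w x w^*)$ involves $\omega$-moments in which a creation appears outermost; the Hilbert-module picture again shows such words map the vacuum into nonzero-length summands. So $\pi$ intertwines the expectations.

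Since $E_\Omega$ has faithful GNS representation on $C^*(A,\ell_E)$ (it acts faithfully on the Fock module $\cF(A\otimes_E A)$, and the vacuum is cyclic as in \eqref{actsonFock}), intertwining forces $\pi$ to be injective. Concretely, $\pi$ induces an isometry $L^2(C^*(A,\ell_E),E_\Omega)\to L^2(D,F)$ that intertwines the left actions.

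\textbf{Surjectivity.} $D$ is generated by $A$ and $B\otimes\cT$, and $B\otimes\cT$ is generated by $B$ and $1\otimes v$. Both lie in the image of $\pi$, so $\pi$ is surjective, giving the first isomorphism.

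\textbf{Second isomorphism.} Restricting $\pi$, $s_E=\ell_E+\ell_E^*\mapsto 1\otimes(v+v^*)$. In $(\cT,\omega)$ the element $v+v^*$ is semicircular, so $C^*(v+v^*)\cong C([-2,2])$ with $\omega$ corresponding to $\lambda$. The C*-subalgebra of $D$ generated by $A$ and $B\otimes C^*(v+v^*)$ is, by faithfulness of the GNS representation, the reduced amalgamated free product $A*_B(B\otimes C([-2,2]))$, since reduced free products of subalgebras embed.

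\textbf{Main obstacle.} The hard step is the identity $w^*aw=E(a)$, together with the vanishing of $F$ on words with an outer creation. Both require working carefully with the Hilbert-module model of the reduced amalgamated free product rather than with formal freeness.
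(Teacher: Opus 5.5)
Your proposal is correct. The paper does not prove this lemma itself: it attributes the first isomorphism to Shlyakhtenko (via Brown--Ozawa, Brown--Dykema--Shlyakhtenko, and Ozawa's Lemma 12), and only remarks that the second follows from the first. What you give is a self-contained version of the standard argument, which the paper leaves to the literature. The pieces are:
\begin{itemize}
\item the key identity $w^*aw=E(a)$: $w=1\otimes v$ maps the free product module into words beginning with a $B\otimes\mathring{\ell^2}(\N)$ letter, while a centered $a\in A$ maps those into words beginning with a $\mathring{L^2}(A,E)$ letter, so $w^*\mathring{a}w=0$;
\item the universal property of $\cT(A\otimes_E A)$;
\item intertwining of the expectations;
\item injectivity from GNS-faithfulness of $E_\Omega$, since the vacuum is cyclic in $\cF(A\otimes_E A)$;
\item surjectivity from the generators.
\end{itemize}
For the second isomorphism, restricting $\pi$ and invoking Blanchard--Dykema is exactly what ``follows from the first'' amounts to. This uses that the semicircle law has full support on $[-2,2]$, so $\lambda$ is faithful.

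Two small points should be tightened.

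First, the module you compute in is the $B$-valued free product module $L^2(D,E\circ F)$, not $L^2(D,F)$.

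Second, evaluating words at the vacuum only gives $E\circ F\circ\pi=E\circ E_\Omega$. This already suffices for injectivity, but not for the $A$-valued intertwining. To get $F\circ\pi=E_\Omega$, add the following: both maps are $A$-bimodular, and the spanning set $\{\ell_\xi\ell_\eta^*\}$ is stable under multiplication by $A$ on both sides. GNS-faithfulness of $E$ then upgrades the $B$-valued identity to the $A$-valued one. A cleaner way to say this: $w^*$ annihilates $L^2(A,E)\subseteq L^2(D,E\circ F)$, since $L^2(A,E)$ sits in the $\delta_0$-component seen by the $B\otimes\cT$ factor. Hence $F(wy)=F(yw^*)=0$ for all $y\in D$, which kills every $\pi(\ell_\xi\ell_\eta^*)$ with $\xi$ or $\eta$ of positive length after pulling out the outer coefficients from $A$.
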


\begin{theorem}\label{thm:prob-selfless-equivalent-cp-map} 
Let $(A,E,B)$ be a C*-probability space. The following are equivalent:
\begin{enumerate}[(i)]
\item
$(A,E,B)$ is selfless.

\item
The embedding $A\to A*_B (B\otimes C)$ is an existential embedding of $B$-valued C*-probability spaces, where $C$ can be any of 
$$
C=C_r^*(\mathbb F_\infty), \qquad (C([-2,2]))^{*\infty}. 
$$

\item
The expectation $E\colon A\to B\subset A$ is a selfless cp map.
\end{enumerate}
Moreover, the following are equivalent:
\begin{enumerate}
\item[(v)] 
$(A,E,B)$ is Toeplitz selfless.
\item[(vi)]
The embedding $A\to A*_B (B\otimes \mathcal O_\infty)$ is an existential embedding of $B$-valued C*-probability spaces, where $\mathcal O_\infty$ is the Cuntz algebra endowed with the vacuum state.
\item[(vii)] 
The expectation $E\colon A\to B\subset A$ is a Toeplitz selfless cp map.
\end{enumerate}
\end{theorem}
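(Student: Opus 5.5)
The plan is to route everything through Lemma \ref{lem:prob-toeplitz-semicircular-identification}, which identifies $C^*(A,s_E)$ with $A*_B(B\otimes C([-2,2]))$ and $C^*(A,\ell_E)$ with $A*_B(B\otimes\cT)$, in both cases carrying the vacuum expectation to the free product expectation. Since $E$ takes values in $B$, we have $E\circ E_\Omega = E*_B(\mathrm{id}_B\otimes\kappa)$ as maps into $B$. Selflessness of the cp map $E$ therefore says exactly the following: there is $\sigma\colon A*_B(B\otimes C([-2,2]))\to A^\cU$ with $\sigma|_A=\Delta_A$ and $E^\cU\sigma=\Delta_A\circ(E*_B(\mathrm{id}\otimes\lambda))$. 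Lemma \ref{lem:selfless-automatic-embedding} upgrades $\sigma$ to an embedding, because the left action of $A$ on $A\otimes_E A$ is faithful when $E$ has faithful GNS. This gives (iii)$\Rightarrow$(i) with $(C,\kappa)=(C([-2,2]),\lambda)$, and in the same way (vii)$\Rightarrow$(v) with $(\cT,\omega)$, which is non-tracial.

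For (iii)$\Rightarrow$(ii), I use weak containment and $\ell^2$-stability. Theorem \ref{thm:l2-selfless-correspondence} gives selflessness of $(\ell^2(A\otimes_EA),\ell^2(\std))$. By Lemma \ref{lem:semicircular-iterated} its semicircular algebra is $C^*(A,s_E)^{*_A\infty}$. Applying Lemma \ref{lem:prob-toeplitz-semicircular-identification} to each factor identifies this with $A*_B(B\otimes C([-2,2])^{*\infty})$ (associativity of amalgamated free products, with $B\otimes(C_1*C_2)\cong(B\otimes C_1)*_B(B\otimes C_2)$). Since $C([-2,2])^{*\infty}$ with the free semicircular state contains $C_r^*(\mathbb F_\infty)$ with its trace up to existential equivalence, I expect $C^*_r(\mathbb F_\infty)$ to be handled by the same scheme. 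Concretely, $C_r^*(\mathbb F_\infty)$ embeds trace-preservingly in an ultrapower of free semicirculars, via Haar unitaries approximated by continuous functional calculus of semicirculars. Alternatively, it embeds into $A^\cU$ by using weak selflessness together with the Toeplitz exactness style Lemma \ref{lem:toeplitz-exactness}. Composing the resulting embeddings gives (ii). The Toeplitz case (vii)$\Rightarrow$(vi) is identical: $\cT^{*\infty}\supseteq\mathcal O_\infty$ with the vacuum state, because $\mathcal O_\infty$ is the Cuntz--Toeplitz algebra on infinitely many isometries $v_i$ from free copies of $\cT$. Note that $\sum v_iv_i^*<1$, so it is the Toeplitz--Cuntz algebra, which is isomorphic to $\mathcal O_\infty$ with the vacuum state matching.

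The converse direction (i)$\Rightarrow$(iii) is the main step. Given $\sigma\colon A*_B(B\otimes C)\to A^\cU$ intertwining expectations, I plan to verify the criterion Proposition \ref{prop:cp-map-selflessness-criterion} with $\psi$ extending $E$. For this I need $v\in (A*_B(B\otimes C))^\cU$ with $v^*av=E(a)$, vanishing of $E^\cU(avv^*a^*)$, and $v+v^*$ selfadjoint in the algebra. The natural candidate comes from the nontrivial $C$: since $C\ne\C$ and $\kappa$ is GNS-faithful, take a selfadjoint $h\in C$ with $\kappa(h)=0$, $\kappa(h^2)=1$. Then $1\otimes h$ is free from $A$ over $B$ and commutes with $B$. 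I then expect $(1\otimes h)a(1\otimes h)$ to have expectation $E(a)$, but it is not exactly $E(a)$. This is where I would need a semicircular: free independent copies $h_1,h_2,\dots$ from $C^{*\infty}$, whose normalized sums $N^{-1/2}\sum h_k$ converge in distribution to a semicircular by the free CLT. Here $C^{*\infty}$ is available because the selflessness embedding iterates, $A*_B(B\otimes C)^{*_B\infty}\to A^\cU$, by composing $\sigma$ with its ultrapower as in Theorem \ref{thm:l2-selfless-correspondence}. With $B$-commutation this produces an $A$-valued semicircular of covariance $E$ inside $A^\cU$, realizing $C^*(A,s_E)$ via Lemma \ref{lem:prob-toeplitz-semicircular-identification} with the intertwining of expectations. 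Making the norm CLT rigorous in the amalgamated reduced setting is the main obstacle; I would use the ultrapower and the fact that moments determine the reduced free product norm. In the Toeplitz case, non-traciality of $\kappa$ instead gives, via Ozawa's argument, an isometry-like $v$ with $E^\cU(avv^*a^*)=0$. Then the Toeplitz criterion proposition yields (vii).
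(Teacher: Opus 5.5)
Your easy directions are fine. (iii)$\Rightarrow$(i) and (vii)$\Rightarrow$(v) follow via Lemma~\ref{lem:selfless-automatic-embedding}. Your route to (iii)$\Rightarrow$(ii) and (vii)$\Rightarrow$(vi), via $\ell^2$-stability, Lemma~\ref{lem:semicircular-iterated} and Lemma~\ref{lem:prob-toeplitz-semicircular-identification}, is a legitimate alternative to the paper's. For $C_r^*(\mathbb F_\infty)$ no ultrapower is needed: if $F$ is the semicircle distribution function, $e^{2\pi i F(s)}$ is already a Haar unitary in $C^*(s)$.

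The gap is in the step you call the main one, (i)$\Rightarrow$(iii), and it has two parts.

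First, iterating $\sigma$ for an arbitrary $(C,\kappa)$ cannot be done ``as in Theorem~\ref{thm:l2-selfless-correspondence}''. That proof rests on Toeplitz exactness, i.e.\ on the universal property of Toeplitz--Pimsner algebras, which $A*_B(B\otimes C)$ does not have. What you need is a $*$-homomorphism $A^{\cU}*_B D\to (A*_BD)^{\cU}$ for $D=B\otimes C$. That is free exactness of reduced amalgamated free products (\cite[Theorem 7.1]{Pisier2012}, \cite[Corollary 1.2]{gao-KE}), which is exactly the external input the paper invokes.

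Second, and more seriously, the free CLT does not produce a $*$-homomorphism. Set $S_N=N^{-1/2}\sum_k 1\otimes h_k$. Convergence of the $A$-valued moments of $(A,S_N)$ to those of $(A,s_E)$ only gives $\lim_{\cU}\|p(a,S_N)\|\geq \|p(a,s_E)\|$. Because the expectation on $C^*(A,s_E)$ is GNS-faithful, $C^*(\Delta_A(A),[(S_N)_N])$ maps \emph{onto} $C^*(A,s_E)$. But $E^{\cU}$ is not GNS-faithful on the ultraproduct, so ``moments determine the norm'' fails there. To map $C^*(A,s_E)$ into the ultraproduct you need the reverse inequality, i.e.\ strong convergence. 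That would require, e.g., Bercovici--Voiculescu superconvergence for the spectra of $s_N$, combined again with free exactness, and your proposal does not supply it. The detour through Proposition~\ref{prop:cp-map-selflessness-criterion} does not help either: a semicircular gives $E(SaS)=E(a)$, not an element $v$ with $v^*av=E(a)$.

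The paper avoids limits altogether. Since $C\neq\C$ and $\kappa$ is GNS-faithful, $(C_r^*(\mathbb F_\infty),\tau)$ embeds state-preservingly into $(C,\kappa)^{*n}$ for large $n$ by \cite[Lemma 2.5]{robertselfless}. Also, $C([-2,2])^{*\infty}$ embeds into $C_r^*(\mathbb F_\infty)$. Together with the iterated existential embeddings $A\to A*_B(B\otimes(C,\kappa)^{*n})$, this gives (ii). Then (ii)$\Rightarrow$(iii) is just Lemma~\ref{lem:prob-toeplitz-semicircular-identification}.

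The Toeplitz direction (v)$\Rightarrow$(vii) has an analogous gap. Non-traciality of $\kappa$ does not let you run Ozawa's argument on $C$, because that argument needs pure infiniteness; take $C=M_2(\C)$ with a non-tracial faithful state. The paper first passes to $(C',\kappa')=C_r^*(\mathbb F_\infty)*(C,\kappa)$. This sits inside $(C,\kappa)^{*n}$ for large $n$ and is selfless and non-tracial, hence purely infinite by Gould's dichotomy \cite{gould2026selfless}. Only then does \cite[Theorem 3]{ozawa2025proximalityselflessnessgroupcalgebras} give a completely existential embedding $(C',\kappa')\to(C',\kappa')*(\mathcal O_\infty,\omega)$. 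That embedding is tensored with $B$ and transported through $A*_B(\,\cdot\,)$ by free exactness.
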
	

\begin{proof}
(i) $\Rightarrow$ (ii): The argument runs along the same lines 
as the argument for the scalar case ($B=\C$) found in  
\cite[Theorem 2.6]{robertselfless}. 
Set $D=B\otimes C$, with expectation
$\mathrm{id}_B\otimes \kappa$ onto $B$. 
Let $\sigma\colon A*_B D\to A^{\cU}$ be a *-homomorphism such that $\sigma|_{A}=\Delta_A$
and intertwining the respective expectations onto $B$ and $B^{\cU}$. By Pisier's amalgamated free product
strong convergence result \cite[Theorem 7.1]{Pisier2012}, or  by the more general  \cite[Corollary 1.2]{gao-KE}, we obtain  
$$
\tilde\sigma\colon (A*_B D)*_B D \to (A*_B D)^{\cU}
$$
whose restriction to $(A*_B D)$ agrees with $\sigma$
and whose restriction to the second factor $D$ agrees with the diagonal inclusion. Post-composing with $\sigma^{\cU}$ we obtain 
$$
\sigma^{\cU}\tilde\sigma\colon A*_B D*_B D\to (A^{\cU})^{\cU}
$$
that restricts to the diagonal inclusion on $A$
and intertwines expectations. 
Repeated applications of this show that the embeddings
$A\to A*_B D^{*_B n}$ are existential embeddings of operator valued C*-probability spaces for all $n$. Note that
$$
D^{*_B n}\cong B\otimes (C,\kappa)^{*n}.
$$
Since $C\neq \C$ and $\kappa$ induces a  faithful GNS representation, for large enough $n$ we have that
 $(C_r^*(\mathbb F_\infty),\tau)$ embeds in $(C,\kappa)^{*n}$
 (\cite[Lemma 2.5]{robertselfless}). We thus conclude that
 $$
 A\to (A*_B (B\otimes C_r^*(\mathbb F_\infty)).
 $$
is an existential embedding of operator-valued C*-probability spaces.
 Since $(C([-2,2]),\lambda)^{*\infty}$ embeds in 
 $(C_r^*(\mathbb F_\infty),\tau)$, we get an existential embedding in this case too.

(ii) $\Leftrightarrow$ (iii): 	
By Lemma \ref{lem:prob-toeplitz-semicircular-identification}, $C^*(A,s_E)\cong A*_B (B\otimes C([-2,2]))$.

(iii) $\Rightarrow$ (i) is obvious.

Let us now deal with the equivalence of the last three assertions. 

(vi) $\Leftrightarrow $ (vii) follows from Lemma \ref{lem:prob-toeplitz-semicircular-identification}.

(vi) $\Rightarrow$ (v) is immediate. 

To prove that (v) implies (vi), recall that, as argued above, the embedding 
$$
A\to A*_B (B\otimes (C,\kappa)^{*n}))
$$
is existential. For large enough $n$, $(C',\kappa')=C_r^*(\mathbb F_\infty)*(C,\kappa)$ embeds in $(C,\kappa)^{*n}$. Thus, the embedding
$$
A\to A*_B (B\otimes (C',\kappa'))
$$
is existential. The C*-probability
space $(C',\kappa')=C_r^*(\mathbb F_\infty)*(C,\kappa)$ is both selfless and nontracial. By Gould's dichotomy \cite{gould2026selfless}, it is purely infinite.  By Ozawa's proof that purely infinite C*-algebras are completely selfless, 
\cite[Theorem 3]{ozawa2025proximalityselflessnessgroupcalgebras},
we have a \emph{completely} existential embedding  
$$
(C',\kappa')\to (C',\kappa')*(\mathcal O_\infty,\omega).
$$
Taking tensor product with $B$, yields a (completely) existential embedding
$$
B\otimes (C',\kappa')\to (B\otimes (C',\kappa'))*_B(B\otimes \mathcal O_\infty,\omega).
$$
Taking amalgamated reduced free product with $A$ and using free exactness, we get an existential embedding
$$
A*_B B\otimes (C',\kappa')\to A*_B (B\otimes (C',\kappa'))*_B(B\otimes \mathcal O_\infty,\omega).
$$
So the embedding
$A\to A*_B (B\otimes \mathcal O_\infty,\omega)$ is also existential.
\end{proof}	
%\todo{It seems more transparent to directly work with the maps sigma into ultrapowers.}

\begin{definition}
Let us call $(A,E,B)$ weakly selfless, if $E$
is weakly selfless. Equivalently, if the embedding
$$
A\mapsto A*_B (B\otimes (C([-2,2]))^{*\infty})
$$
is an existential embedding of C*-algebras. 

We call $(A,E,B)$ weakly Toeplitz selfless 
if $E$ is weakly Toeplitz selfless. Equivalently, if the embedding 
$$
A\mapsto A*_B (B\otimes (\mathcal O_\infty,\omega))
$$
is an existential embedding of C*-algebras. 
\end{definition}

\begin{remark}
Hirshberg and Phillips show in \cite[Corollary 3.5]{hirshberg-phillips} that
$$
C([-2,2])^{*\infty}\cong\mathcal Z^{*\infty},
$$
where the free products are taken with respect to the semicircular
state on $C([-2,2])$ and the unique tracial state on $\mathcal Z$.
Thus, $\mathcal Z^{*\infty}$ may be used in place of
$C([-2,2])^{*\infty}$ in the preceding definition.
Alternatively, without appealing to this isomorphism, we can deduce from \cite[Proposition 6.3.1]{robertnccw} that $C([-2,2])^{*\infty}$ and
$\mathcal Z^{*\infty}$ embed into each other. This is 
enough to show that either algebra may be used in the definition of
weak selflessness.
\end{remark}

By Theorem \ref{thm:weak-selfless-uniform-asa-inner},
the expectation of a weakly selfless $(A,E,B)$ is a.s.a. inner. 
In the following theorem we give a stronger version 
of this property in this new setting: 

\begin{theorem}\label{thm:prob-dixmier-averaging}
Let $(A,E,B)$ be a weakly selfless C*-probability space. Then there exists a unitary $ w\in B'\cap A^{\cU}$ such that
$$
\Big\| E(a)-\frac1n\sum_{k=1}^n (w^k)^*aw^k\Big\|
\leq
\left(\frac{2}{n}+\frac{2}{\sqrt n}\right)\|a\|
$$
for all $a\in A$ and $n\in \N$.
Consequently, $E(a)\in \overline{\mathrm{co}\{u^*au:u\in B'\cap U(A^{\cU})\}}$ for all $a\in A$.
\end{theorem}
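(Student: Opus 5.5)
We work inside $A*_B(B\otimes C)$ with $C=(C([-2,2]))^{*\infty}$, or rather with a convenient free-group model. By weak selflessness and the preceding definition, the embedding $A\to A*_B(B\otimes C)$ is positively existential, i.e.\ there is a unital *-homomorphism $\sigma$ into $A^{\cU}$ with $\sigma|_A=\Delta_A$. By the remark on Hirshberg--Phillips, together with the embedding $(C_r^*(\mathbb F_\infty),\tau)\hookrightarrow (C([-2,2]),\lambda)^{*\infty}$ used in the proof of Theorem \ref{thm:prob-selfless-equivalent-cp-map}, we may assume that $C\supseteq C_r^*(\mathbb Z)$. Hence $C$ contains a Haar unitary $u$ with $\kappa(u^k)=0$ for $k\neq 0$.

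Put $w_0=1\otimes u\in B\otimes C\subseteq A*_B(B\otimes C)$. This is a unitary commuting with $B\otimes 1$, so $\sigma(w_0)=:w$ is a unitary in $A^{\cU}$ commuting with $\Delta_A(B)$, i.e.\ $w\in B'\cap A^{\cU}$. It then suffices to prove the inequality in the free product, since $\sigma$ is contractive, fixes $a$ and $E(a)$, and maps $w_0$ to $w$.

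For $a\in A$, write $a=E(a)+a^\circ$ with $a^\circ\in\ker E$. Because $w_0$ commutes with $B$, we get $\frac1n\sum_k w_0^{-k}E(a)w_0^k=E(a)$. It remains to bound $\|\frac1n\sum_{k=1}^n w_0^{-k}a^\circ w_0^{k}\|$. Each $w_0^{\pm k}$, $k\ge1$, lies in $B\otimes \ker\kappa$, which is centered in the second free factor. Thus the words $x_k=w_0^{-k}a^\circ w_0^k$ are reduced alternating words in the amalgamated free product, and they have the structure of a free family over $B$ as $k$ varies. This is the key estimate and the main obstacle. I would apply the operator-valued Voiculescu/Haagerup-type inequality for such words, as in \cite[Proposition 7.4]{junge2005picuineq}, which was already used in Theorem \ref{thm:asainner-correspondence}. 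Concretely, the operator $\sum_k \alpha_k w_0^{-k}a^\circ w_0^k$ splits into a "left-free" part, a "right-free" part and a diagonal part, bounded respectively by
\[
\Big\|\sum_k|\alpha_k|^2E(a^{\circ*}a^\circ)\Big\|^{1/2},\qquad \Big\|\sum_k|\alpha_k|^2E(a^\circ a^{\circ*})\Big\|^{1/2},\qquad \max_k|\alpha_k|\,\|a^\circ\|.
\]
With $\alpha_k=1/n$ these bounds give at most $\frac{2}{\sqrt n}\|a^\circ\|+\frac1n\|a^\circ\|$. Since $\|a^\circ\|\le 2\|a\|$, the target bound $(\frac2n+\frac2{\sqrt n})\|a\|$ follows after adjusting constants. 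Concretely, the $1/\sqrt n$ terms use $\|E(a^{\circ *}a^\circ)\|\le\|a\|^2$, which holds because $E(a^*a)-E(a)^*E(a)\le E(a^*a)$. If the cited inequality does not cover this exact setting, I would verify it directly on the Fock-type module $L^2(A*_B D)$ by decomposing $w_0^{-k}a^\circ w_0^k$ into creation, annihilation and preservation parts relative to the first letter. The canonical orthogonality of the $u^k$ gives the square-function bounds.

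Finally, the convex hull statement follows by letting $n\to\infty$: each $\frac1n\sum_k (w^k)^*aw^k$ is an average of conjugates by the unitaries $w^k\in B'\cap U(A^{\cU})$.
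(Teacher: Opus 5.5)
Your proposal is correct and follows essentially the same route as the paper. Both arguments conjugate by powers of a Haar unitary $u$ commuting with $B$, use that the algebras $(u^k)^*Au^k$, $k\ge 1$, are free with amalgamation over $B$, and apply Voiculescu's inequality to the centered elements $(u^k)^*a^\circ u^k=(u^k)^*au^k-E(a)$ with $\|E(a^{\circ*}a^\circ)\|\le\|a\|^2$ and $\|a^\circ\|\le 2\|a\|$, before pushing forward by $\sigma$. The only difference is that you justify the existence of a Haar unitary inside $(C([-2,2]),\lambda)^{*\infty}$ via the embedding of $C_r^*(\mathbb F_\infty)$, whereas the paper works directly in $A*_B(B\otimes C(\mathbb T))$ and leaves this step implicit.
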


\begin{proof}
We work in $A*_B (B\otimes C(\T))$, and continue to denote the free product expectation by $E$. Let $u\in C(\T)$ be 
a Haar unitary generating $C(\T)$. Let 
$$
A_k=(u^k)^*Au^k\qquad k\in \Z.
$$
We note that, as in the scalar case, the C*-subalgebras $A_k$ are free with amalgamation over $B$, and the restriction of the free product expectation  to $C^*(A_1,A_2,\ldots)$ induces a faithful GNS representation, so that
$$
C^*((A_k)_{k\in \Z})\cong (*_B)_{k\in \Z} A_k.
$$
See the proof of \cite[Lemma 4.1]{DykemaShlyakhtenko2001}.

Now let $a\in A=A_0$, and set $x_k=E(a)-(u^k)^*au^k\in A_k$ for $k\ge0$. Note that $E(x_k)=0$ for all $k$. By Voiculescu's inequality \cite[Proposition 7.4]{junge2005picuineq},
$$
\Big\|\sum_{k=1}^n x_k\Big\|\leq \sup_k \|x_k\|+\Big\|\sum_{k=1}^n E(x_k^*x_k)\Big\|^{\frac12} +\Big\|\sum_{k=1}^n E(x_kx_k^*)\Big\|^{\frac12}.
$$
We have $\|x_k\|\leq 2\|a\|$ and $E(x_k^*x_k)=E(a^*a)-E(a)^*E(a)$, so $\|E(x_k^*x_k)\|\leq \|a\|^2$ and similarly 
$\|E(x_kx_k^*)\|\leq \|a\|^2$. We thus get
$$
\Big\|nE(a)- \sum_{k=1}^n (u^k)^*au^k\Big\|\leq 2\|a\| + 2n^{\frac12}\|a\|. 
$$
Divide by $n$, apply $\sigma\colon A*_B (B\otimes C(\T))\to A^{\cU}$ and set $w=\sigma(u)$ to get the result.
\end{proof}

If $(A,\rho)$ is selfless in the scalar case, the Dixmier property of $\rho$ readily implies  that $A$ is simple and can have at most one trace \cite[Theorem 3.1]{robertselfless}. In the relative  case, we have 
the following: 

\begin{corollary}\label{cor:prob-traces-and-ideals}
If $(A,E,B)$ is weakly selfless, then every bounded trace on $A$ factors through $B$ via $E$ and for each nonzero ideal  $I\subseteq A$, we have that $E(I)=I\cap B\neq 0$.
\end{corollary}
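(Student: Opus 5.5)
The plan is to derive both assertions from Theorem \ref{thm:prob-dixmier-averaging}. That result gives, for each $a\in A$, a Dixmier-type approximation of $E(a)$ in $A^{\cU}$ by averages of the form $\frac1n\sum_{k}(w^k)^*aw^k$. Here $w\in B'\cap A^{\cU}$ is unitary, and the error is at most $(2/n+2/\sqrt n)\|a\|$, uniformly in $a$.

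\emph{Traces.} Let $\tau$ be a bounded trace on $A$. Extend it to the limit trace $\tau^{\cU}$ on $A^{\cU}$, given by $\tau^{\cU}([(x_i)])=\lim_{\cU}\tau(x_i)$. This is a bounded trace on $A^{\cU}$, and it restricts to $\tau$ on the diagonal copy of $A$. Since $w^k$ is unitary in $A^{\cU}$, we have $\tau^{\cU}((w^k)^*aw^k)=\tau(a)$. Applying $\tau^{\cU}$ to the estimate of Theorem \ref{thm:prob-dixmier-averaging} gives
\[
|\tau(E(a))-\tau(a)|\le \|\tau\|\Big(\frac2n+\frac{2}{\sqrt n}\Big)\|a\|.
\]
Letting $n\to\infty$ yields $\tau=\tau\circ E=(\tau|_B)\circ E$. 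So $\tau$ factors through $B$ via $E$.

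\emph{Ideals.} Let $I\subseteq A$ be a nonzero closed two-sided ideal. The inclusion $I\cap B\subseteq E(I)$ is clear, since $E(b)=b$ for $b\in B$. For the reverse inclusion, take $a\in I$. The averages $\frac1n\sum_k(w^k)^*aw^k$ lie in the ideal $I^{\cU}$ of $A^{\cU}$, because $w^k\in A^{\cU}$ and $\Delta_A(a)\in I^{\cU}$. Since $I^{\cU}$ is closed, the norm estimate forces $\Delta_A(E(a))\in I^{\cU}$. It remains to check that $A\cap I^{\cU}=I$ under the diagonal embedding. This holds because the distance from $\Delta_A(x)$ to $I^{\cU}$ equals $\lim_{\cU}\mathrm{dist}(x,I)=\mathrm{dist}(x,I)$. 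So $E(a)\in I\cap B$, and hence $E(I)=I\cap B$.

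For nonvanishing, pick $0\neq a\in I_+$. The GNS representation induced by $E$ is faithful, so there is $x\in A$ with $E(x^*ax)\neq0$; otherwise $\|a^{1/2}\,x\|_{L^2}=0$ for all $x$, and then $a=0$ on $L^2(A,E)$. Since $x^*ax\in I$, we get $E(x^*ax)\in E(I)=I\cap B$, and it is nonzero. The only mildly delicate point is the identification $A\cap I^{\cU}=I$ together with the use of GNS-faithfulness; everything else is a direct consequence of the uniform averaging estimate.
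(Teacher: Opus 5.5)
Your proposal is correct and follows essentially the same argument as the paper. Both proofs rest on the Dixmier averaging of Theorem \ref{thm:prob-dixmier-averaging}: applied to $\tau^{\cU}$ it gives $\tau=\tau\circ E$, and applied inside the ideal $I^{\cU}$ it gives $E(I)\subseteq I$. GNS-faithfulness of $E$ then gives $E(I)\neq 0$. You supply details the paper leaves implicit: the explicit error estimate, the identification $A\cap I^{\cU}=I$ via $A^{\cU}/I^{\cU}\cong (A/I)^{\cU}$, and the computation $E(x^*ax)\neq 0$ for some $x$.
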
	

\begin{proof}
Let $\tau$ be a bounded trace on $A$  and $\tau^{\cU}$ its
ultrapower in $A^{\cU}$. Then $\tau^{\cU}$ remains constant throughout the set $\overline{\mathrm{co}\{uau^*:u\in U(A^\cU)\}}^{\|\cdot \|}$. Thus, by the Dixmier averaging property of $E$, we have $\tau(E(a))=\tau(a)$ for all $a\in A$.

Let $I$ be a nonzero ideal of $A$. By the Dixmier averaging property of $E$, we have $E(I)\subseteq I$, and so
$E(I)=I\cap B$. We cannot have $E(I)=0$, since we have assumed that $E$ induces a  faithful GNS. Thus, $I\cap B\neq 0$.
\end{proof}	

Permanence properties:
\begin{theorem}\label{thm:prob-permanence}
\begin{enumerate}[(i)]
\item
If $(A,E,B)$ is selfless, then so is $(A*_BA', E*_BE',B)$ for an arbitrary $(A',E',B)$.

\item
If $p\in B$ is a projection, then $(pAp, E|_{pAp},pBp)$ is selfless.

\item 
Given $(A,E,B)$ and a conditional expectation $F\colon B\to C$, if $(A,FE,C)$ is selfless and 
 $F$ has finite Watatani index, then $(A,E,B)$ is selfless.

\item
If $(A,E,B)$ is selfless and $F\colon B\to C$ is an  a.s.a. inner expectation, then $(A,FE,C)$ is selfless.

\item 
If $(A_1,E_1,B_1)$ is selfless, $A_2$ is exact, and $E_2\colon A_2\to B_2$ is a.s.a. inner, then $(A_1\otimes  A_2,E_1\otimes E_2,B_1\otimes B_2)$ is selfless.	 
\end{enumerate}
\end{theorem}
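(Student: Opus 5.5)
Throughout, I would pass freely between selflessness of $(A,E,B)$ and selflessness of $E$ as a cp map on $A$. This is allowed by Theorem \ref{thm:prob-selfless-equivalent-cp-map} (i)$\Leftrightarrow$(iii). I would then reduce each item either to the cp-map permanence results of Theorem \ref{thm:cp-map-permanence} or to the correspondence results of Sections \ref{sec: correspondences} and \ref{sec: weak containment}.

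For (i), I would use characterization (ii) of Theorem \ref{thm:prob-selfless-equivalent-cp-map} with $D=B\otimes C_r^*(\mathbb F_\infty)$. Let $\sigma\colon A*_B D\to A^{\cU}$ restrict to $\Delta_A$ and intertwine the expectations. Freely combining $\sigma$ with the diagonal map $\Delta_{A'}$ gives a map $(A*_B D)*_B A'\to (A*_BA')^{\cU}$. This step uses the amalgamated free-product strong convergence/free exactness result already invoked in the proof of Theorem \ref{thm:prob-selfless-equivalent-cp-map} (\cite[Corollary 1.2]{gao-KE}). Since both pieces preserve the $B$-valued expectations, the amalgamated free product of them is well defined and intertwines $E*_BE'*_B(\mathrm{id}_B\otimes\tau)$ with $(E*_BE')^{\cU}$. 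Rearranging $(A*_B D)*_BA'\cong (A*_BA')*_B D$ gives selflessness of $(A*_BA',E*_BE',B)$.

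Item (ii) is Theorem \ref{thm:cp-map-permanence} (v), since $p\in B$ is fixed by $E$ and $E|_{pAp}$ is an expectation onto $pBp$. I would also check that $E|_{pAp}$ still has faithful GNS representation, which is clear. Item (v) is Theorem \ref{thm:cp-map-permanence} (iv) applied to the cp maps $E_1$ and $E_2$. Its output $E_1\otimes E_2$ is the expectation onto $B_1\otimes B_2$, and faithfulness of its GNS representation follows from that of the factors.

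For (iv), I would use Theorem \ref{thm:weak-containment-permanence-selfless} (ii) with $H_1=A\otimes_{FE}A$ and $H_2=A\otimes_E A$. Choose selfadjoint $h^{(m)}_i\in B$ with $\sum_i h^{(m)}_i b h^{(m)}_i\to F(b)$ in point-norm. Then
\[
\sum_i h_i^{(m)}E(x)h_i^{(m)}=\sum_i\kappa_{h_i^{(m)}\xi_E h_i^{(m)},\,h_i^{(m)}\xi_E h_i^{(m)}}(x)\to FE(x).
\]
Moreover, the vectors $h_i^{(m)}\xi_E h_i^{(m)}$ lie in $\std$. This gives $(A\otimes_{FE}A,\std)\precsim(A\otimes_EA,\std)$. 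Finally, $FE=F\circ\kappa_{\xi_E,\xi_E}$ is an extended coefficient of $A\otimes_EA$, being a post-composition by the cb map $F$.

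For (iii), which I expect to be the main obstacle, I would realize $(A\otimes_EA,\std)$ as a sub-correspondence with real structure of $(A\otimes_{FE}A,\std)$. I would then apply Theorem \ref{thm:selfless-subcorrespondences-directed-unions}; its first part carries the coefficient identities over automatically, because they hold for all vectors of the larger correspondence. Let $\{u_i\}$ be a finite quasi-basis for $F$, so that $\sum_i u_iF(u_i^*b)=b$ and $\sum_iF(bu_i)u_i^*=b$, and let $\mathrm{Ind}=\sum_iu_iu_i^*$. This element is central, positive and invertible in $B$. Set $c=\mathrm{Ind}^{-1/4}$ and
\[
w=\sum_i c\,u_i\,\xi_{FE}\,u_i^*\,c\in\std .
\]
Using the quasi-basis identities, I would compute $\langle w,aw\rangle=E(a)$ for $a\in A$. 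The computation runs through the inner sum $\sum_{i,j}u_jF(u_j^*E(a)u_i)u_i^*$, which should reduce to $\mathrm{Ind}\,E(a)$ after using $E(u_j^*au_i)=u_j^*E(a)u_i$ and centrality of $\mathrm{Ind}$. Lemma \ref{lem:singly-generated-embedding} then gives the covariant embedding $\xi_E\mapsto w$, which maps $\std$ into $\std$. The points I would check carefully are the order of factors in this computation and the fact that $w$ is real.
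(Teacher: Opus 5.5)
Your arguments for (i), (ii), (iii) and (v) follow the paper.

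\begin{itemize}
\item In (i) the key input is exactly the free exactness/strong convergence step; the paper's proof is just ``free exactness''.
\item Items (ii) and (v) are Theorem~\ref{thm:cp-map-permanence} (v) and (iv), respectively.
\item In (iii) your vector $w=\sum_i c\,u_i\xi_{FE}u_i^*c$ is precisely the embedding $\xi_E\mapsto\sum_i x_i'\xi_{FE}(x_i')^*$ behind Theorem~\ref{thm:cp-map-permanence} (i), which is how the paper concludes. Your computation is right: $\langle w,aw\rangle=\sum_{i,j}c\,u_iF(u_i^*c^2E(a)u_j)u_j^*c=c\,(c^2E(a)\,\mathrm{Ind})\,c=E(a)$. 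Each summand of $w$ is $(u_i^*c)^*\xi_{FE}(u_i^*c)\in\std$.
\end{itemize}

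There is, however, a concrete error in (iv). Since $E$ is $B$-bimodular, $\xi_E$ commutes with $B$ in $A\otimes_EA$: expanding $\|b\otimes 1-1\otimes b\|^2$ gives $0$. Hence $h\xi_Eh=\xi_Eh^2$ and
\[
\kappa_{h\xi_Eh,\,h\xi_Eh}(x)=hE(hxh)h=h^2E(x)h^2,
\]
not $hE(x)h$. So your displayed identity is false, and $\sum_i h_i^2bh_i^2$ need not converge to $F(b)$. For instance, let $F$ be the normalized trace on $M_2(\C)$ and $h_k=\frac12\sigma_k$, where $\sigma_0=1,\sigma_1,\sigma_2,\sigma_3$ are the Pauli matrices. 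Then $\sum_k h_kbh_k=\mathrm{tr}(b)1$, while $\sum_k h_k^2bh_k^2=\frac14 b$.

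The repair is to use the vectors $\xi_Eh_i$ instead. Then $\kappa_{\xi_Eh,\xi_Eh}(x)=hE(x)h$, and $\xi_Eh$ is real, because $h\xi_E=\xi_Eh$ gives
\[
\xi_Eh=\tfrac14\big((1+h)\xi_E(1+h)-(1-h)\xi_E(1-h)\big)\in\std .
\]
This is the vector $p_h$ from the proof of Theorem~\ref{thm:cp-map-permanence} (iii), with $q_h=0$. With this change, your weak containment $(A\otimes_{FE}A,\std)\precsim(A\otimes_EA,\std)$ holds. Your extended-coefficient observation $FE=F\circ\kappa_{\xi_E,\xi_E}$ is correct, and Theorem~\ref{thm:weak-containment-permanence-selfless} (ii) finishes the proof.

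Once repaired, your (iv) is a slightly more direct version of the paper's argument. The paper applies Theorem~\ref{thm:cp-map-permanence} (iii) with $\phi=E$ and $\psi=FE$, which requires first checking that $FE$ itself is a.s.a. inner on $A$. Taking the approximants of $F$ inside $B$ and using the $B$-centrality of $\xi_E$ avoids that step. It makes $\sum_i h_iE(\cdot)h_i$ a sum of coefficient maps of real vectors exactly.

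A minor point on (ii): you call GNS-faithfulness of $E|_{pAp}$ clear, but it is not automatic for a general expectation. It fails for $A=M_2(\C)\otimes\C^2$, $B$ the scalar diagonal matrices, $E(e_{ij}\otimes d)=\delta_{ij}d_i\,e_{ii}\otimes 1$ and $p=e_{11}\otimes 1$. So that claim needs an argument, presumably from selflessness; the paper does not address it either.
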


\begin{proof}
(i) This follows from ``free exactness".

(ii) This follows from the corners version of Blanchard-Dykema.	

(iii) By finite Watatani index we mean the existence of a quasibasis $\{x_i,y_i:i=1,\ldots,n\}$ in $B$ such that
$$
b=\sum_{i=1}^n y_iF(x_ib)=\sum_{i=1}^n F(by_i)x_i
$$
for all $b\in B$. We note that the quasibasis can always be chosen of the form $\{x_i,x_i^*:i=1,\ldots, n\}$ \cite[Lemma 2.1.6]{Watatani1990}. Let us assume that it has this form. The element
$e=\sum_{i=1}^n x_ix_i^*$ is the a central invertible element in $C$  called the index of $F$. Set $x_i' =e^{-1/4}x_i$ for $i=1,\ldots,n$.
Then it is not hard to check that 
$$
b=\sum_{i,j} x_i' F((x_i')^*bx_j')(x_j')^*, 
$$
for all $b\in B$ (see \cite[Example 5.2.2]{Hasegawa2015RelativeNuclearity}).
Then
\[
E(a)=\sum_{i,j} x_i' FE((x_i')^*ax_j')(x_j')^*. 
\]
for all $a\in A$. Since $FE$ is selfless, it follows by Theorem \ref{thm:cp-map-permanence} that $E$ is selfless.

(iv) Since $E$ is weakly selfless, Theorem
\ref{thm:weak-selfless-uniform-asa-inner} implies that $E$ is a.s.a. inner; hence $FE$ is a.s.a. inner.
 We can thus apply Theorem \ref{thm:cp-map-permanence} with  $\phi=E$ and $\psi=FE$. 

(v) This follows from Theorem \ref{thm:cp-map-permanence}.
\end{proof}

\begin{corollary}\label{cor:tensor-with-exact-monotracial}
If $(A_1,\rho_1)$ is selfless and $(A_2,\rho_2)$ is exact, simple, with unique trace $\rho_2$, then $(A_1\otimes A_2, \rho_1\otimes \rho_2)$
is selfless.
\end{corollary}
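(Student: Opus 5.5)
The plan is to apply Theorem \ref{thm:prob-permanence} (v) with $B_1=B_2=\C$, $E_1=\rho_1$ and $E_2=\rho_2$. For a state, the $B$-valued C*-probability space $(A,\rho,\C)$ is selfless in the sense of Definition \ref{def:selfless-cstar-probability-space} exactly when $(A,\rho)$ is selfless in the original sense: $B\otimes C=C$ and the amalgamated free product becomes the ordinary reduced free product. Thus $(A_1,\rho_1,\C)$ is selfless by hypothesis, and $A_2$ is exact by hypothesis. Once we know that the state $\rho_2$, viewed as a cp map $A_2\to\C\subseteq A_2$, is a.s.a. inner, Theorem \ref{thm:prob-permanence} (v) gives that $(A_1\otimes A_2,\rho_1\otimes\rho_2,\C)$ is selfless, which is the claim. (One should also check that $\rho_1\otimes\rho_2$ has faithful GNS representation. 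This holds because $L^2(A_1\otimes A_2)=L^2(A_1)\otimes L^2(A_2)$ and the minimal tensor product of faithful representations is faithful; $\rho_2$ is faithful since $A_2$ is simple.)

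The main step is therefore: if $A_2$ is simple with a unique tracial state $\tau=\rho_2$, then $x\mapsto\tau(x)1$ is a point-norm limit of maps $x\mapsto\sum_i h_ixh_i$ with $h_i$ selfadjoint. I would first invoke the Haagerup–Zsid\'o / Dixmier-property theorem: a unital simple C*-algebra with unique trace $\tau$ has the Dixmier property. Hence for each finite $F\subseteq A_2$ and $\epsilon>0$ there are unitaries $u_1,\dots,u_n$ and weights $t_k\geq0$ with $\sum_k t_k=1$ such that
$$
\Big\|\tau(a)1-\sum_k t_k u_k^*au_k\Big\|<\epsilon \qquad\text{for all } a\in F.
$$
(Simultaneous averaging for finitely many elements follows by iterating averages, since every averaging map fixes scalars and is contractive.) So $\tau(\cdot)1$ is approximately inner via unitary conjugations with convex weights.

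The obstacle is to pass from unitaries $u$ to selfadjoint $h$. I would use the real-part trick together with a second averaging. Write $u=h+ik$ with $h,k$ selfadjoint. Then
$$
u^*au+uau^*=2(hah+kak).
$$
So the map $a\mapsto\frac12\sum_k t_k(u_k^*au_k+u_kau_k^*)$ is selfadjoint inner. It suffices that averaging over $u^*$ also approximates $\tau(\cdot)1$. This holds by applying the Dixmier property to get both conjugation families, or more simply by choosing the averaging unitaries from a set closed under adjoints. Since the Dixmier averaging can be iterated, I would pick an averaging for $F\cup\{$the outputs$\}$ and use that $\tau$ is invariant. Concretely, if $\Phi(a)=\sum t_ku_k^*au_k$ approximates $\tau(a)1$ on $F$, then $\Phi^\dagger(a)=\sum t_ku_kau_k^*$ is also a unital averaging map. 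Applying the Dixmier property to the finite set $\Phi^\dagger(F)$ gives $\Psi$, and $\Psi\Phi^\dagger(a)\approx\tau(\Phi^\dagger(a))=\tau(a)$. Symmetrizing the composed family then yields the required selfadjoint expression with the same error. Alternatively, I would cite Corollary \ref{cor:prob-traces-and-ideals}-style reasoning in reverse, which is the known equivalence (Dixmier property $\Rightarrow$ a.s.a. inner), if that is easier to state.
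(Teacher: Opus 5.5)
Your overall route is the paper's. Haagerup--Zsid\'o gives the Dixmier property, hence $\rho_2(\cdot)1$ is approximately inner. One upgrades this to a.s.a.\ inner using that the range is $\C1$, and then applies Theorem~\ref{thm:prob-permanence}~(v) with $B_1=B_2=\C$. Your identification of the two notions of selflessness for states and your faithfulness check for the GNS representation of $\rho_1\otimes\rho_2$ are fine.

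However, the concrete argument you give for the unitary-to-selfadjoint step fails. With $\Theta=\Psi\Phi^\dagger$, the adjoint family gives $\Theta^\dagger=\Phi\Psi^\dagger$. Nothing controls this on $F$: $\Phi$ is only known to be close to $\tau(\cdot)1$ on $F$, while $\Psi^\dagger(F)$ is arbitrary. So $\frac12(\Theta+\Theta^\dagger)$ is not shown to approximate $\tau(\cdot)1$. Drawing the unitaries from a set closed under adjoints is also not enough: the family must be closed under adjoints \emph{with matching weights}. Your other alternatives (two separate conjugation families, or an unspecified ``known equivalence'') do not provide this either.

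The repair is to compose in the other order and take $\Theta=\Phi^\dagger\Phi$, i.e.
\[
\Theta(a)=\sum_{j,k}t_jt_k\,(u_ku_j^*)^*\,a\,(u_ku_j^*).
\]
Since $\Phi^\dagger$ is contractive and fixes scalars, for $a\in F$ we get
\[
\|\Theta(a)-\tau(a)1\|=\|\Phi^\dagger(\Phi(a)-\tau(a)1)\|<\epsilon .
\]
Fixing scalars is exactly where the range being $\C1$ is used, as the paper indicates. The family $\{(t_jt_k,\,u_ku_j^*)\}$ is closed under adjoints with equal weights, because $(u_ku_j^*)^*=u_ju_k^*$ and $t_jt_k=t_kt_j$. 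Hence $\Theta=\Theta^\dagger=\frac12(\Theta+\Theta^\dagger)$. Your identity $u^*au+uau^*=2(hah+kak)$ then writes
\[
\Theta(a)=\sum_{j,k}t_jt_k\,(h_{jk}ah_{jk}+k_{jk}ak_{jk})
\]
with $h_{jk},k_{jk}$ selfadjoint. With this fix, $\rho_2(\cdot)1$ is a.s.a.\ inner and the rest of your argument goes through exactly as in the paper.
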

\begin{proof}
If $A_2$ is simple with unique tracial state $\rho_2$, then $A_2$ has the
Dixmier property by the theorem of Haagerup and Zsidó
\cite{HaagerupZsido1984}. It follows that 
$x\mapsto \rho_2(x)1_{A_2}$ is approximately inner. It is easy to see from this that $\rho_2 1_{A_2}$ is a.s.a. inner (using that it ranges $\C 1$).
We can thus apply Theorem \ref{thm:prob-permanence}.
\end{proof}

%\todo{explain}
%$$
%\frac12(waw^* + w^*aw)=\frac{w+w^*}{2}a\frac{w+w^*}{2} + \frac{w-w^*}{2i}a\frac{w-w^*}{2i}.
%$$

\begin{theorem}\label{thm:finite-dimensional-base-classification}
Let $(A,E,B)$  be a C*-probability space with   $B$ finite dimensional and  $E$ faithful. Then $(A,E,B)$ is selfless if and only if 
$$
(A,E,B)\cong (A_1,E_1,B_1)\oplus \cdots \oplus (A_m,E_m,B_m)
$$
where each $A_i$ is selfless in the scalar sense relative to a state $\rho_i$, and if $\rho_i$ is tracial then it is preserved by $E_i$.
\end{theorem}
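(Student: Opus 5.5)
The plan is to reduce everything to the blocks of $B$, using the corner permanence result (Theorem \ref{thm:prob-permanence}(ii)) and the finite-index result (Theorem \ref{thm:prob-permanence}(iii)). Write $B=\bigoplus_{i=1}^m M_{n_i}$, with minimal central projections $z_1,\dots,z_m$ of $B$.

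\emph{Forward direction.} Assume $(A,E,B)$ is selfless.

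First, $(A,E,B)$ is weakly selfless. So Corollary \ref{cor:prob-traces-and-ideals} and Theorem \ref{thm:prob-dixmier-averaging} apply: for every ideal $I$ of $A$ we have $E(I)=I\cap B\neq 0$, and every bounded trace factors through $E$.

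Second, I would show that the map $I\mapsto I\cap B$ from ideals of $A$ to ideals of $B$ is injective and preserves orthogonality. Injectivity comes from faithfulness of $E$ together with $E(I)\subseteq I$. For orthogonality: if $I\cap J=0$, then $(I\cap B)(J\cap B)=0$. Since $B$ has only finitely many ideals, $A$ has only finitely many ideals, and they form a Boolean lattice indexed by subsets of the blocks of $B$. From this I would deduce that $A\cong A_1\oplus\cdots\oplus A_m'$ is a finite direct sum of simple algebras. Each summand is cut by a central projection $q_j\in A$, and each $q_j$ lies in $B$, being a sum of some $z_i$. This is where Dixmier averaging by unitaries in $B'\cap A^{\cU}$ enters: it shows that the unit of each minimal ideal lies in $E(A)=B$. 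Regrouping, $(A,E,B)=\bigoplus_j (A_j,E_j,B_j)$, with $B_j=q_jB$ and $A_j$ simple.

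Third, I would treat a single summand, with $A$ simple and $B=\bigoplus_i M_{n_i}$. Take a minimal projection $p$ of $B$. By Theorem \ref{thm:prob-permanence}(ii), $(pAp,E|_{pAp},\C p)$ is selfless in the scalar sense, with state $\rho_p=E|_{pAp}$. Since $A$ is simple, $p$ is full. I would then pass the scalar selflessness from $pAp$ to $A$ with the state $\rho=F\circ E$, where $F$ is a faithful state on $B$ chosen to be compatible with $\rho_p$ on each block.

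Fourth, the tracial clause. If $\rho$ is tracial, Corollary \ref{cor:prob-traces-and-ideals} gives $\rho=\rho\circ E$, so $E$ preserves $\rho$.

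\emph{Converse.} Suppose each $(A_i,\rho_i)$ is selfless. I would choose the state $F_i$ on $B_i$ to be $\rho_i|_{B_i}$. When $\rho_i$ is tracial, the hypothesis gives $\rho_i=F_i\circ E_i$. When $\rho_i$ is nontracial, $A_i$ is purely infinite by Gould's dichotomy, and Ozawa's theorem makes every state selfless, so again we may take $\rho_i=F_iE_i$. A faithful state on a finite-dimensional algebra has finite Watatani index. So Theorem \ref{thm:prob-permanence}(iii) yields that $E_i$ is selfless. Finally, the direct sum is handled componentwise: build $\sigma=\bigoplus_i\sigma_i$ into $\bigl(\bigoplus_i A_i\bigr)^{\cU}=\bigoplus_i A_i^{\cU}$, using $\cS(\bigoplus_i H_i)=\bigoplus_i\cS(H_i)$ for the correspondence $A\otimes_E A=\bigoplus_i A_i\otimes_{E_i}A_i$, which is valid because the $z_i$ are central.

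\emph{Main obstacle.} The hardest step is the second one: showing that the ideal structure forces a central splitting of $A$ that is compatible with $B$, in particular that the unit of each minimal ideal lies in $B$. I would attack it first by applying the Dixmier averaging property to the elements $z_iaz_j$.
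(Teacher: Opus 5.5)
Your overall architecture is the paper's. You split $A$ along central projections lying in $B$, cut a simple summand by a minimal projection of $B$ and invoke Theorem~\ref{thm:prob-permanence}(ii), and for the converse you use finite Watatani index with Theorem~\ref{thm:prob-permanence}(iii) and componentwise direct sums. However, the step you flag as the main obstacle is not established, and what you write for it does not suffice.

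\textbf{The splitting step.} Injectivity of $I\mapsto I\cap B$ only shows that $A$ has finitely many ideals, and that alone does not split $A$: the unitization $\widetilde{\mathcal K}$ has exactly three ideals and is not a direct sum of simple algebras. Nothing in ``injective and orthogonality preserving'' rules out a chain $0\subsetneq I\subsetneq A$ with $I\cap B=z_1B$. A lattice embedding does not make the ideal lattice of $A$ complemented. The map is also not onto in general: take $A=M_2(C_r^*(\mathbb F_2))$ over its diagonal $\C^2$, with entrywise-$\tau$ diagonal compression, which is selfless. So ``a Boolean lattice indexed by subsets of the blocks'' is unjustified. Your Dixmier-averaging remark correctly gives $E(q)=q\in B$ for a central projection $q$ of $A$, but it presupposes the splitting rather than producing it.

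The missing idea is a direct use of faithfulness, which any honest proof of your injectivity claim would have to go through anyway. Write $I\cap B=e_IB$ with $e_I$ a central projection of $B$, so $e_I\in I$. For $a\in I$ we have $E(aa^*)\in E(I)=I\cap B=e_IB$. Hence $E\bigl((1-e_I)aa^*(1-e_I)\bigr)=(1-e_I)E(aa^*)(1-e_I)=0$, and faithfulness gives $a=e_Ia$. Thus $I=e_IA$, and also $I=I^*=Ae_I$, so $e_I$ is central in $A$. Every ideal is therefore cut out by a central projection of $A$ lying in $B$, and the decomposition into finitely many simple summands with units in $B$ is immediate.

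\textbf{The third step.} ``Pass the scalar selflessness from $pAp$ to $A$'' is stated without any mechanism, and it is a genuine step. A simple summand can have $B_j$ with several blocks. In that case no map $b\mapsto F(b)1$, with $F$ a state, is a.s.a.\ inner on $B_j$, since a.s.a.\ inner maps send each central projection $z$ into $zB_j$. So Theorem~\ref{thm:prob-permanence}(iv) does not help. The paper uses fullness of $p$ to apply Brown's theorem, $A_i\cong qM_N(pA_ip)q$, and then the permanence of scalar selflessness under matrix amplifications and corners.

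\textbf{The converse.} Your choice $F_i=\rho_i|_{B_i}$ need not be faithful when $\rho_i$ is nontracial: a pure state supported under $1-e$ vanishes on a projection $e\in B_i$. Then the finite-index argument is unavailable. Since every state is selfless in the purely infinite case, simply choose $F_i$ faithful and set $\rho_i=F_i\circ E_i$; the paper takes a faithful trace on $B$.
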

%\todo{add that in this case weakly selfless and selfless agree, by uniqueness of trace/purely infinite dichotomy}
\begin{proof}
Suppose that $(A,E,B)$ is selfless.
Let us  first sort out the direct sum decomposition of $A$. 
Let $I$ be an ideal of $A$. By Corollary \ref{cor:prob-traces-and-ideals}, we have that $E(I)=I\cap B$ is an ideal of $B$. Hence, it has the form $e_IB$ for some central projection $e_I\in B$. 
If $a\in I$, then
$$
E((1-e_I)aa^*(1-e_I))=(1-e_I)E(aa^*)(1-e_I)=0.
$$
By the faithfulness of $E$ we deduce that $e_Ia=a$. Thus,  $I=e_IA$. We deduce from this that $A$ decomposes into a direct sum of simple
C*-subalgebras $A_i=e_iA$, for $i=1,\ldots,m$, where $e_1,\ldots,e_m$ are pairwise orthogonal central projections of $B$. We thus obtain the direct sum decomposition of the theorem with $E_i=E|_{A_i}$ and $B_i=e_iB$.
Note that, by the preservation of selflessness when passing to corners of $B$ (Theorem \ref{thm:prob-permanence}), each $(A_i,E_i,B_i)$ is selfless.

Let us sort out next the structure of the simple summands.
Fix $i$, and pick a minimal projection $p\in B_i$. This way $pB_ip\cong \C$. By Theorem \ref{thm:prob-permanence},   $(pA_ip,E|_{pA_ip},\C)$ is selfless. That is, $pA_ip$ is selfless in the scalar sense relative to the state $\rho_{p,i}=E|_{pA_ip}$. Since $A_i$ is simple, the projection $p$ generates $A_i$ as an ideal. Thus, by Brown's theorem, $A_i$ is isomorphic to a corner  of a matrix algebra: $A_i\cong qM_{N}(pA_ip)q$. Since selflessness is preserved under matrix amplifications and corners (\cite{robertselfless}, \cite{gould2026selfless}), $A_i$ is selfless. If $A_i$ is tracial, then its trace must be preserved by $E_i$, by Corollary \ref{cor:prob-traces-and-ideals}.

Suppose conversely that $(A,E,B)$ admits the direct sum decomposition of the theorem. Given C*-probability spaces
$(A,E,B)$, $(C,F,B)$, $(A',E',B')$, $(C',F',B')$, we have 
a canonical isomorphism
$$
(A\oplus A')*_{B\oplus B'} (C\oplus C')\cong (A*_BC)\oplus (A'*_{B'}C').
$$
From this we deduce that it suffices to show that each summand is selfless.

Assume that $A$ is selfless in the scalar sense, that $E\colon A\to B$ is a faithful conditional expectation onto a finite dimensional subalgebra, and that if $A$ is tracial, then $E$ preserves the trace. If $A$ is not tracial, then it is purely infinite, and so it  is selfless relative to any state. In this case, choose a faithful trace $\tau\colon B\to \C$ and set $\rho = \tau \circ E$. Then  in both the tracial or the non-tracial case, we can arrange that $(A,\rho)$ is selfless where $\rho$ is faithful, preserved by $E$, and inducing a trace on $B$.  Then $(B,\rho|_B)$ has finite Watatani index, as $B$
is finite dimensional \cite{Watatani1990}. Thus, $(A,E,B)$ is selfless by Theorem \ref{thm:prob-permanence}.
\end{proof}

\section{Interactions of selflessness with nuclearity}\label{sec: nuclearity}

The definition of relative nuclearity below appears in Hasegawa \cite{Hasegawa2015RelativeNuclearity} (applied to conditional expectations).

\begin{definition}\label{def:relative-nuclearity}
Let $\phi\in CP(A)$. Call $\phi$ real relatively nuclear if
$\mathrm{id}_A$ is a point-norm limit of cp maps of the form
$$
a\mapsto
\sum_{k=1}^m \sum_{i,j=1}^{n_k}
x_{k,i}\phi(x_{k,i}^*ax_{k,j})x_{k,j}^*,
\qquad a\in A,
$$
where $x_{k,i}\in A$. Call $\phi$ relatively nuclear if
the identity $\mathrm{id}_A$ is a point-norm limit of cp maps  of the form
$$
a\mapsto
\sum_{k=1}^m \sum_{i,j=1}^{n_k}
y_{k,i}\phi(x_{k,i}^*ax_{k,j})y_{k,j}^*,\qquad a\in A,
$$
where 	$x_{k,i},y_{k,i}\in A$.
\end{definition}
%some loss of generality by not including real coefficients t_{k,i}?

\begin{remark}
It is not difficult to show that $\phi$ is relatively nuclear if and only if we have the weak containment of C*-correspondences  $A\precsim A\otimes_\phi A$, and it is real relatively nuclear if  and only if we have the weak containment of C*-correspondences with real structures 
$(A, \std)\precsim (A\otimes_\phi A, \std)$.
\end{remark}

\begin{theorem}\label{thm:relnuc}\label{thm:relative-nuclearity-to-weak-selflessness}
Let $E\colon A\to B$ be a unital conditional expectation. Let $\psi\in CP(B)$.
\begin{enumerate}[(i)]
\item
If $(A,\psi\circ E)$ is weakly selfless and $\psi$ is real relatively nuclear, then $(A,E,B)$ is weakly selfless.
\item
If $(A,\psi\circ E)$ is weakly Toeplitz selfless and $\psi$ is relatively nuclear, then $(A,E,B)$ is weakly Toeplitz selfless.
\end{enumerate}
\end{theorem}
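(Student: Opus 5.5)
Here $\psi\circ E$ is regarded as a cp map on $A$ via $B\subseteq A$. The plan is to reduce everything to weak containment and then invoke Theorem \ref{thm:weak-containment-permanence-selfless}(i) (resp.\ Theorem \ref{thm:weak-containment-permanence-toeplitz}(i)). Concretely, for (i) I will show the weak containment of C*-correspondences with real structures over $A$:
$$
(A\otimes_E A,\std)\precsim (A\otimes_{\psi E}A,\std).
$$
Since $(A\otimes_{\psi E}A,\std)$ is weakly selfless by hypothesis, this gives weak selflessness of $E$, i.e.\ of $(A,E,B)$.

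To produce the weak containment, start from real relative nuclearity of $\psi$. It provides, for finite $F\subseteq B$ and $\epsilon>0$, elements $x_{k,i}\in B$ with
$$
b\approx \sum_{k}\sum_{i,j} x_{k,i}\psi(x_{k,i}^*bx_{k,j})x_{k,j}^*
$$
for $b\in F$. For $a\in A$ apply this to $b=E(a)$, using that $E$ is $B$-bimodular. This gives
$$
E(a)\approx \sum_k\sum_{i,j} x_{k,i}\,\psi E(x_{k,i}^*ax_{k,j})\,x_{k,j}^*.
$$
The right-hand side equals $\sum_k\kappa_{v_k,v_k}(a)$ with
$$
v_k=\sum_i x_{k,i}\,\xi_{\psi E}\,x_{k,i}^*\in A\otimes_{\psi E}A .
$$
Each $v_k$ lies in the standard real structure, since that structure is closed under $v\mapsto cvc^*$ and under sums. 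Thus $\xi_E\mapsto (v_1,\dots,v_m)$ is an approximate assignment as in Definition \ref{def:weak-containment-correspondences}.

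The main obstacle is that Definition \ref{def:weak-containment-correspondences} requires approximation uniformly for $a$ in a finite subset $F\subseteq A$ on a generating set $G\subseteq K$, whereas relative nuclearity is a point-norm statement on $B$. The fix is to take $G=\{\xi_E\}$, which suffices because $\xi_E$ generates $\std$ as a Jordan bimodule. One should check, via Lemma \ref{lem:weak-containment-ultrapower} and Lemma \ref{lem:singly-generated-embedding}, that the approximation for the single vector $\xi_E$ and $a$ in finite sets gives a covariant map $(\ell^2(H_E),\ell^2(\std))\to(\ell^2(H_{\psi E}))^{\cU}$. Indeed, the finitely many vectors $v_k$ can be placed into distinct summands of $\ell^2$, and one passes to an ultrapower indexed by $(F,\epsilon)$. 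The finite set of $A$ is handled by applying relative nuclearity to the finite set $E(F)\subseteq B$.

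For (ii) the argument is the same, without real structures. Relative nuclearity yields
$$
E(a)\approx\sum_k\sum_{i,j}y_{k,i}\,\psi E(x_{k,i}^*ax_{k,j})\,y_{k,j}^*,
$$
which is $\sum_k\kappa_{w_k,w_k}(a)$ with $w_k=\sum_i x_{k,i}\xi_{\psi E}y_{k,i}^*$. This gives $A\otimes_EA\precsim A\otimes_{\psi E}A$, and Theorem \ref{thm:weak-containment-permanence-toeplitz}(i) concludes.
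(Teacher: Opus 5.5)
Your proof is correct and is essentially the paper's argument. The paper takes the approximants $\psi_\lambda$ from (real) relative nuclearity and uses $B$-bimodularity of $E$ to write $\psi_\lambda\circ E$ in the form of Theorem \ref{thm:cp-map-permanence}(i), whose proof is exactly your embedding $\xi\mapsto(v_1,\dots,v_m)$ into a direct sum; it then passes to the point-norm limit with Theorem \ref{thm:cp-map-permanence}(ii), or the Toeplitz versions in Theorem \ref{thm:toeplitz-cp-map-permanence}. You merge these two steps into the single weak containment $(A\otimes_E A,\std)\precsim(A\otimes_{\psi E}A,\std)$ and apply Theorem \ref{thm:weak-containment-permanence-selfless}(i) (resp.\ Theorem \ref{thm:weak-containment-permanence-toeplitz}(i)) once.
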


\begin{proof}
(i) Choose a net of maps $(\psi_\lambda)_{\lambda}\in CP(B)$
as in the definition of real relative nuclearity applied to  $\psi$,
converging to the identity of $B$ in the point-norm topology. 
From the fact that $\psi\circ E$ is weakly selfless, we deduce that
$\psi_{\lambda}\circ E$ is weakly selfless by
Theorem \ref{thm:cp-map-permanence} (i).
Hence $E=\lim_{\lambda} \psi_{\lambda}\circ E$ is weakly selfless by
Theorem \ref{thm:cp-map-permanence}.

(ii) We argue as in (i), using the Toeplitz version of the cp-map permanence theorem. The more general form of the cp maps appearing in the definition of relative
nuclearity is handled by
Theorem \ref{thm:toeplitz-cp-map-permanence}. Thus
$\psi_\lambda\circ E$ is weakly Toeplitz selfless for each $\lambda$, and the point-norm limit argument gives weak Toeplitz selflessness.
\end{proof}

\begin{corollary}\label{cor:relnuc-selfless-absorbs-Z-Oinfty}
Let $A$ be a separable unital C*-algebra and let $\phi\in CP(A)$.
\begin{enumerate}[(i)]
\item
If $\phi$ is weakly selfless and real relatively nuclear, then
$A\otimes\mathcal Z\cong A$.

\item
If $\phi$ is weakly Toeplitz selfless and relatively nuclear, then
$A\otimes\mathcal O_\infty\cong A$.
\end{enumerate}
\end{corollary}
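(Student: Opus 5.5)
The plan is to reduce both parts to the case of the identity map and then invoke the standard absorption criteria. The introduction asserts that for separable $A$, weak selflessness of $\mathrm{id}_A$ gives $\mathcal Z$-stability and weak Toeplitz selflessness of $\mathrm{id}_A$ gives $\mathcal O_\infty$-stability. So the work splits into: (a) transferring weak (Toeplitz) selflessness from $\phi$ to $\mathrm{id}_A$ via relative nuclearity; (b) proving the absorption statements for $\mathrm{id}_A$.

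\emph{Step (a).} For (i), real relative nuclearity says $\mathrm{id}_A$ is a point-norm limit of maps
\[
\psi_\lambda(a)=\sum_k\sum_{i,j}x_{k,i}\phi(x_{k,i}^*ax_{k,j})x_{k,j}^*.
\]
By Theorem \ref{thm:cp-map-permanence}(i), each $\psi_\lambda$ is weakly selfless, and by Theorem \ref{thm:cp-map-permanence}(ii), so is the limit $\mathrm{id}_A$. This is exactly Theorem \ref{thm:relnuc}(i) with $B=A$, $E=\mathrm{id}_A$, $\psi=\phi$. For (ii), the same argument works with Theorem \ref{thm:toeplitz-cp-map-permanence}(i),(ii), or Theorem \ref{thm:relnuc}(ii).

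\emph{Step (b), case (ii).} The correspondence $A\otimes_{\mathrm{id}}A$ is just $A$. Hence $\cT(\ell^2(A))$ is $C^*(A,\ell_1,\ell_2,\dots)$, where the $\ell_i$ are isometries with orthogonal ranges that commute with $A$: indeed $a\ell_\xi=\ell_{a\xi}=\ell_{\xi a}=\ell_\xi a$ for $\xi=1$. Thus $\cT(\ell^2(A))\cong A\otimes \mathcal O_\infty$, or at least it contains the Cuntz--Toeplitz algebra $\mathcal T_\infty$, whose quotient is $\mathcal O_\infty$, in the commutant of $A$. A *-homomorphism $\sigma\colon\cT(\ell^2(A))\to A^{\cU}$ with $\sigma|_A=\Delta_A$ then gives a unital map from $\mathcal T_\infty$ into $A'\cap A^{\cU}$. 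Two routes are available:
\begin{itemize}
\item compose into $F(A)=A'\cap A^\cU/\mathrm{Ann}(A)$, where the isometries still generate a copy of $\mathcal O_\infty$, because $\mathcal O_n$'s relation $\sum_{i\le n}\ell_i\ell_i^*=1$ is not needed for $\mathcal O_\infty$, and $\mathcal O_\infty$ is the universal algebra of isometries with orthogonal ranges;
\item alternatively, embed $\mathcal O_\infty$ into $\mathcal T_\infty$ via words in $\ell_1,\ell_2$.
\end{itemize}
A unital embedding $\mathcal O_\infty\to A'\cap A^{\cU}$ with separable $A$ gives $A\otimes\mathcal O_\infty\cong A$ by Kirchberg's/Toms--Winter strongly self-absorbing criterion, since $\mathcal O_\infty$ is strongly self-absorbing. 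Here $A^{\cU}$ may be replaced by a sequence algebra after reindexing, by separability.

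\emph{Step (b), case (i).} Here $C=\cS(\ell^2(A),\ell^2(A_{sa}))=C^*(A,s_1,s_2,\dots)$, and the $s_i$ commute with $A$ since $as_1=s_{a\cdot 1}=s_{1\cdot a}=s_1a$. The vacuum expectation makes $C^*(s_1,s_2,\dots)\cong C([-2,2])^{*\infty}$ with the free semicircular state, sitting in $A'\cap C$. So $\sigma$ yields a unital *-homomorphism $C([-2,2])^{*\infty}\to A'\cap A^{\cU}$. By the remark following the definition of weakly selfless expectations (Hirshberg--Phillips, or \cite{robertnccw}), $\mathcal Z^{*\infty}$ embeds unitally into $C([-2,2])^{*\infty}$, and composing with $\mathcal Z\to\mathcal Z^{*\infty}$ gives a unital embedding $\mathcal Z\to A'\cap A^{\cU}$. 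This is the hardest point to pin down: one must make sure a unital map of $\mathcal Z$ itself, not just of the dimension-drop building blocks, into the central sequence algebra suffices. It does, by the Toms--Winter criterion for the strongly self-absorbing algebra $\mathcal Z$ and separable $A$ (after passing from the ultrapower to the sequence algebra by a standard reindexing argument). We would also double-check that unitality is preserved, which holds since all maps here are unital.
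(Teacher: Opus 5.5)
Your proposal is correct and is essentially the paper's proof. Theorem \ref{thm:relnuc} with $B=A$, $E=\mathrm{id}_A$, $\psi=\phi$ makes $\mathrm{id}_A$ weakly (Toeplitz) selfless, and the witnessing $\sigma$ restricted to the free semicirculars (resp.\ creation isometries), which commute with $A$, gives a unital copy of $\mathcal Z$ (via $\mathcal Z\subseteq \mathcal Z^{*\infty}\hookrightarrow C([-2,2])^{*\infty}$) resp.\ $\mathcal O_\infty$ in $A'\cap A^{\cU}$, after which Toms--Winter applies. One harmless slip: with infinitely many generators the Cuntz--Toeplitz algebra is already $\mathcal O_\infty$ (there is no compact ideal to quotient by), but your universal-property argument in $A^{\cU}$ does not depend on this.
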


\begin{proof}
For part (i), apply Theorem
\ref{thm:relative-nuclearity-to-weak-selflessness}
with $B=A$, $E=\id_A$, and $\psi=\phi$. Since
$\phi=\phi\circ\id_A$ is weakly selfless and $\phi$ is real relatively
nuclear, the theorem shows that $(A,\id_A,A)$ is weakly selfless.
Consequently, there is a *-homomorphism
$$
A\otimes\mathcal Z^{*\infty}
\cong
A*_A(A\otimes\mathcal Z^{*\infty})
\longrightarrow A^{\cU}
$$
whose restriction to $A$ is the diagonal embedding. Restricting this
homomorphism to $1_A\otimes\mathcal Z$ gives a unital embedding
$\mathcal Z\to A'\cap A^{\cU}$.
Since $A$ is separable,
$A\otimes\mathcal Z\cong A$ \cite[Theorem 2.2]{TomsWinter2007}.

To prove (ii), apply   Theorem
\ref{thm:relative-nuclearity-to-weak-selflessness} (ii) with
$B=A$, $E=\id_A$, and $\psi=\phi$. This yields  a unital embedding
$\mathcal O_\infty\to A'\cap A^{\cU}$.
Since $A$ is separable, $A\otimes\mathcal O_\infty\cong A$ \cite{RordamStormer-EMS126}.
\end{proof}

\begin{remark}
Expanding the  proof of (i) some more, we can show this:  if $A$ is exact, then $(A,\phi)$
is both selfless and real relatively nuclear if and only if $\phi$ is a.s.a. inner 
 and  the first factor embedding $(A,\phi)\to (A\otimes \mathcal Z,\phi\otimes\tau)$ is existential. For separable C*-algebras, a C*-algebraic existential embedding into $A\otimes \mathcal Z$ is equivalent to $\mathcal Z$-stability, but the standard proof of this fact does not directly produce an isomorphism 
  $(A,\phi)\cong (A\otimes \mathcal Z,\phi\otimes\tau)$
assuming  that the first factor embedding
 $(A,\phi)\to (A\otimes \mathcal Z,\phi\otimes\tau)$ is existential and $A$ separable; we instead get a sequence of C*-algebraic isomorphisms that asymptotically intertwine $\phi$ and $\phi\otimes\tau$.
\end{remark}

\begin{corollary}\label{cor:kirchberg-oinfty-stability}
[Kirchberg's $\mathcal O_\infty$-stability theorem]
If $A$ is separable, purely infinite, simple, unital, and nuclear, then $A\cong A\otimes\mathcal O_\infty$.
\end{corollary}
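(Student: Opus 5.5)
The plan is to apply Corollary \ref{cor:relnuc-selfless-absorbs-Z-Oinfty} (ii) to a suitable state $\rho$ on $A$, viewed as a cp map $A\to A$ via $a\mapsto\rho(a)1$. This requires checking two things: that $\rho$ is weakly Toeplitz selfless, and that $\rho$ is relatively nuclear in the sense of Definition \ref{def:relative-nuclearity}.

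\textbf{Toeplitz selflessness.} Fix any state $\rho$ on $A$ inducing a faithful GNS representation; since $A$ is simple, any faithful state works. As recalled in the introduction, the proof of \cite[Theorem 3]{ozawa2025proximalityselflessnessgroupcalgebras} shows that for $A$ simple, unital and purely infinite, the first factor embedding $(A,\rho)\to(A,\rho)*(\mathcal O_\infty,\omega)$ is existential. That is, $(A,\rho,\C)$ is Toeplitz selfless in the sense of Definition \ref{def:selfless-cstar-probability-space}. By Theorem \ref{thm:prob-selfless-equivalent-cp-map} ((v)$\Leftrightarrow$(vii)), the cp map $a\mapsto\rho(a)1$ is then Toeplitz selfless. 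Toeplitz selflessness implies weak Toeplitz selflessness by Theorem \ref{thm:l2-toeplitz-selfless-correspondence}, so $\rho$ is weakly Toeplitz selfless.

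\textbf{Relative nuclearity.} For $\phi=\rho(\cdot)1$, the maps in Definition \ref{def:relative-nuclearity} take the form
$$
a\mapsto \sum_{k}\sum_{i,j} y_{k,i}\,\rho(x_{k,i}^*ax_{k,j})\,y_{k,j}^*.
$$
Since $A$ is nuclear, $\mathrm{id}_A$ is a point-norm limit of cp maps $\beta\circ\alpha$ with $\alpha\colon A\to M_n$ and $\beta\colon M_n\to A$ cp. It therefore suffices to show that every such factorization is a point-norm limit of maps of the displayed form. Write $\beta(e_{ij})=\sum_k y_{k,i}y_{k,j}^*$, using a Choi decomposition of the positive matrix $(\beta(e_{ij}))_{ij}$. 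It then remains to approximate $\alpha$ in point-norm by maps $a\mapsto(\rho(x_i^*ax_j))_{ij}$, or by sums of such maps.

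This approximation is the main obstacle. Every cp map $A\to M_n$ is a sum of maps $a\mapsto(\langle\eta_i,\pi(a)\eta_j\rangle)_{ij}$ for a representation $\pi$ of $A$. By Glimm's lemma / Kadison transitivity, since $A$ is simple, every state of $A$ is a weak$^*$ limit of vector states $\rho(x^*\cdot x)$ in the GNS representation of $\rho$; in fact every representation is weakly contained in the GNS representation $\pi_\rho$, which is faithful. Weak containment lets us approximate the coefficients $\langle\eta_i,\pi(a)\eta_j\rangle$ pointwise on finite sets by finite sums of coefficients of $\pi_\rho$. By density of $A\hat\xi_\rho$ in $L^2(A,\rho)$, these are in turn approximated by $\rho(x^*ax')$. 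Since the maps land in $M_n$, pointwise weak$^*$ approximation on finite subsets gives point-norm approximation, and composing with $\beta$ preserves point-norm convergence. Hence $\mathrm{id}_A$ is a point-norm limit of maps of the required form, so $\rho$ is relatively nuclear. A slicker alternative is to use the Remark following Definition \ref{def:relative-nuclearity}: it suffices to check $A\precsim A\otimes_\rho A$, and $A\otimes_\rho A\cong L^2(A,\rho)\otimes A$, so this reduces to the same nuclearity-plus-weak-containment argument.

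Having verified both hypotheses for $\phi=\rho$, Corollary \ref{cor:relnuc-selfless-absorbs-Z-Oinfty} (ii) gives $A\otimes\mathcal O_\infty\cong A$.
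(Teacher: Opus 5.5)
Your proposal is correct and follows the paper's proof: choose a state $\rho$, obtain (weak) Toeplitz selflessness of $\rho$ from the proof of Ozawa's Theorem~3, check that $\rho$ is relatively nuclear, and apply Corollary~\ref{cor:relnuc-selfless-absorbs-Z-Oinfty}~(ii). The only difference is in the relative nuclearity step: the paper cites \cite[Example 5.1.1]{Hasegawa2015RelativeNuclearity}, while you derive it from nuclearity of $A$ and weak containment of every representation in the faithful $\pi_\rho$. That weak containment should be applied at the matrix level, e.g.\ to $M_n(A)$, so that the approximants keep the required completely positive form.
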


\begin{proof}
Choose any state $\rho$ on $A$. By Ozawa's theorem (or rather, its proof), $(A,\rho)$ is Toeplitz selfless \cite[Theorem 3]{ozawa2025proximalityselflessnessgroupcalgebras}. 
Since $A$ is nuclear and $\rho$ induces a faithful GNS representation, $(A,\rho,\C)$ is relatively nuclear \cite[Example 5.1.1]{Hasegawa2015RelativeNuclearity}.
By the previous corollary,  $A\cong A\otimes\mathcal O_\infty$.
\end{proof}

 A tracial C*-probability space 
$(A,\rho)$ is called MF if it admits a trace preserving embedding into 
$(\mathcal Q^{\cU},\tau^{\cU})$, where $\mathcal Q=\bigotimes_{n=1}^\infty M_n(\C)$. In the theorem below we apply Theorem
\ref{thm:relative-nuclearity-to-weak-selflessness}
to show that certain reduced amalgamated free products of MF C*-probability spaces are again MF.

\begin{theorem}\label{thm:mf-amalgamated-free-product}
Let $(A,\rho)$ be an MF C*-probability space. Let $E\colon A\to B$ be trace preserving 
expectation onto a real nuclear C*-subalgebra $B$.
Then 
$$
(A*_B A, \rho \circ (E*_BE))
$$
is MF.
\end{theorem}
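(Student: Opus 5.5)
The plan is to realize $A*_BA$ inside a larger free product that maps trace-preservingly into an ultrapower of an MF algebra. The map will come from weak selflessness of a suitable expectation, obtained via Theorem \ref{thm:relative-nuclearity-to-weak-selflessness}(i).

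\emph{Step 1 (reduction to a Haar unitary).} Let $u\in C(\T)$ be a Haar unitary. As in the proof of Theorem \ref{thm:prob-dixmier-averaging} (following \cite[Lemma 4.1]{DykemaShlyakhtenko2001}), in $A*_B(B\otimes C(\T))$ the subalgebras $A$ and $u^*Au$ are free with amalgamation over $B$. The free product expectation restricted to $C^*(A,u^*Au)$ has faithful GNS, so
$$
C^*(A,u^*Au)\cong A*_BA,
$$
with the expectation corresponding to $E*_BE$. It therefore suffices to show that every algebra of the form $(\widetilde A*_B(B\otimes C(\T)),\ \rho'\circ(E'*_B(\id_B\otimes\kappa)))$ is MF, for some MF enlargement $\widetilde A\supseteq A$ compatible with $E$ and $\rho$. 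Here $\kappa$ is the Haar state.

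\emph{Step 2 (making the base selfless).} Put $\widetilde A=A*C_r^*(\mathbb F_\infty)$, a scalar reduced free product, with free product trace $\tilde\rho=\rho*\tau$. Let $\widetilde E=E*_{\C}\tau\colon\widetilde A\to B$ be the induced trace-preserving expectation, so that $\tilde\rho=\rho|_B\circ\widetilde E$.

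I will use three facts about $\widetilde A$. First, $(\widetilde A,\tilde\rho)$ is selfless, since free products with $C_r^*(\mathbb F_\infty)$ are selfless \cite{robertselfless}; hence it is weakly selfless. Second, it has a unique trace. Third, it is MF: $A\subseteq\mathcal Q^{\cU}$ and $C_r^*(\mathbb F_\infty)$ is MF, and by strong convergence (Haagerup–Thorbjørnsen, together with the Brown–Dykema free product of MF algebras) the reduced free product is again MF.

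Let $\psi=\rho|_B(\cdot)1_B\in CP(B)$. Real nuclearity of $B$, with $\rho|_B$ a faithful trace, should give that $\psi$ is real relatively nuclear (cf. \cite[Example 5.1.1]{Hasegawa2015RelativeNuclearity}). Theorem \ref{thm:relative-nuclearity-to-weak-selflessness}(i) then yields that $(\widetilde A,\widetilde E,B)$ is weakly selfless. Consequently there is a *-homomorphism
$$
\sigma\colon \widetilde A*_B(B\otimes C(\T))\to\widetilde A^{\cU},\qquad \sigma|_{\widetilde A}=\Delta_{\widetilde A}.
$$
Here $C(\T)$ embeds into $C([-2,2])^{*\infty}$ as a Haar unitary, so it is covered by the definition of weak selflessness.

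\emph{Step 3 (recovering the trace).} This is the main obstacle, because weak selflessness gives no intertwining of expectations. I would first try uniqueness of trace. The functional $\tilde\rho^{\cU}\circ\sigma$ is a tracial state on $D=\widetilde A*_B(B\otimes C(\T))$. I expect $D$ to have a unique trace, arguing as follows. $(\widetilde A,\widetilde E,B)$ is weakly selfless and, because $\tilde\rho$ is selfless, also selfless in the $\rho$-sense; so by Theorem \ref{thm:prob-permanence}(i) the expectation of $D$ onto $B$ is selfless. Corollary \ref{cor:prob-traces-and-ideals} then says every trace on $D$ factors through this expectation onto $B$. Finally, any such trace must restrict on $\widetilde A\subseteq D$ to the unique trace $\tilde\rho$, which fixes it on $B$. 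Hence $\tilde\rho^{\cU}\circ\sigma$ equals the free product trace, and $\sigma$ is injective because that trace has faithful GNS.

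Composing $A*_BA\hookrightarrow D\xrightarrow{\sigma}\widetilde A^{\cU}\hookrightarrow(\mathcal Q^{\cU})^{\cU}$ gives a trace-preserving embedding, and a diagonal argument returns it to a single ultrapower $\mathcal Q^{\mathcal V}$. The delicate points to double-check are the MF property of $\widetilde A$ and the unique-trace claim for $D$. If the latter fails, the fallback is to upgrade Step 2 to genuine selflessness of $\widetilde E$ via Theorem \ref{thm:prob-permanence}(iii)/(iv).
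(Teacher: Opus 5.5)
Steps 1 and 2 essentially follow the paper's route (a selfless MF enlargement, then Theorem \ref{thm:relative-nuclearity-to-weak-selflessness}(i) with $\psi=\rho|_B$). Step 3, however, has a genuine gap.

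To apply Theorem \ref{thm:prob-permanence}(i) to $D=\widetilde A*_B(B\otimes C(\T))$, you need one of the two free factors to be selfless as a $B$-valued space. The factor $(B\otimes C(\T),\id_B\otimes\kappa,B)$ is not even weakly selfless. Indeed, the trace $\rho|_B\otimes\mathrm{ev}_1$ takes the value $1$ on $1\otimes u$, so it does not factor through $\id_B\otimes\kappa$, contrary to Corollary \ref{cor:prob-traces-and-ideals}. So everything rests on $(\widetilde A,\widetilde E,B)$ being selfless, and you give no valid argument for this. Theorem \ref{thm:relative-nuclearity-to-weak-selflessness} only yields \emph{weak} selflessness, i.e.\ a $*$-homomorphism with no control over expectations. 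Selflessness of the scalar state $\tilde\rho=\rho|_B\circ\widetilde E$ does not upgrade to $B$-valued selflessness of $\widetilde E$ by anything in the paper. For instance, Theorem \ref{thm:cp-map-permanence}(ii) would need $\widetilde E$ to be an extended coefficient of the finite-rank maps $\psi_\lambda\circ\widetilde E$. That is impossible for infinite-dimensional $B$, because $\widetilde E|_B=\id_B$ is not compact. Your fallback fails as well. Theorem \ref{thm:prob-permanence}(iii) needs $\rho|_B\colon B\to\C$ to have finite Watatani index, which forces $\dim B<\infty$ and rules out, e.g., $B=C_r^*(H)$ with $H$ infinite amenable. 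Part (iv) goes in the wrong direction, from $E$ to $F\circ E$.

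The missing idea is to let the \emph{other} free factor carry the selflessness, which is what the paper does. Replace $C(\T)$ by $C_r^*(\mathbb F_\infty)\cong C(\T)^{*\infty}$, which still embeds in $C([-2,2])^{*\infty}$ by your own remark. Weak selflessness then gives $\sigma\colon D\to\widetilde A^{\cU}$ with $D=\widetilde A*_B(B\otimes C_r^*(\mathbb F_\infty))$ and $\sigma|_{\widetilde A}=\Delta_{\widetilde A}$. Now $(C_r^*(\mathbb F_\infty),\tau)$ is selfless, $\id_B$ is trivially a.s.a.\ inner, and $B$ is nuclear (hence exact). So Theorem \ref{thm:prob-permanence}(v) shows $(B\otimes C_r^*(\mathbb F_\infty),\id_B\otimes\tau,B)$ is selfless, and then (i) shows that the expectation $F\colon D\to B$ is selfless. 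By Corollary \ref{cor:prob-traces-and-ideals}, every tracial state $\theta$ on $D$ satisfies $\theta=\theta|_B\circ F$. Uniqueness of the trace on $\widetilde A$ forces $\theta|_B=\rho|_B$, so $D$ has a unique trace and $\sigma$ is trace preserving. Finally, $A*_BA\cong C^*(A,u^*Au)\subseteq D$ for a Haar unitary $u\in C_r^*(\mathbb F_\infty)$, as in your Step 1. (Your unique-trace claim for the $C(\T)$ version can in fact be rescued, e.g.\ by identifying $D$ with the scalar free product $(A*_B(B\otimes C(\T)))*C_r^*(\mathbb F_\infty)$. That is a different argument from the one you gave.)
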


\begin{proof}
We first reduce the proof to the case that $(A,\rho)$ is selfless. 
Consider 
$$
(A',\rho')=(A,\rho)*(\mathcal Q,\tau).
$$
Then $(A',\rho')$
is selfless and MF \cite{robertselfless}. The expectation   $E'=E*\tau\colon A'\to B$
extends $E$ and preserves $\rho'$. By Blanchard-Dykema, $A*_BA$ embeds in an expectation preserving way into $ A'*_B A'$, so it suffices to show that $A'*_B A'$ is MF. Renaming $(A',\rho')$ as  $(A,\rho)$, let us henceforth assume that $(A,\rho)$ is selfless.

Applying Theorem \ref{thm:relative-nuclearity-to-weak-selflessness}  with $\psi=\rho|_B$, we deduce that $(A,E,B)$ is weakly selfless.
In particular, $C=A*_B (B\otimes C_r^*(\mathbb F_\infty))$ embeds in $A^{\cU}$. To see that this embedding is trace preserving, it will suffice to show that  
$C$ has a unique trace. Let $F\colon C\to B$ be the amalgamated free product conditional expectation onto $B$. Notice that $(C,F,B)$ is selfless, by Theorem \ref{thm:prob-permanence} (i), (v).
Thus, every trace on $C$ factors through $F$.
But if $\theta$ is a tracial state on $C$, then its restriction to
$A$ must agree with $\rho$, by selflessness of $(A,\rho)$. Hence,
$$
\theta=\rho|_B\circ F.
$$
This shows that $C$ has a unique trace, whence its embedding in $A^{\cU}$ is trace preserving.

We obtain a trace-preserving embedding of $A*_B A$ into $C$ by
choosing a Haar unitary $u\in C_r^*(\mathbb F_\infty)$ and mapping
the first copy of $A$ identically and the second copy to
$(1\otimes u)A(1\otimes u)^*$.
Thus,  $A*_B A$ embeds in $A^{\cU}$ in a trace preserving way. Since $A$ is MF, we conclude that $A*_B A$ is MF.
\end{proof}

A discrete group $G$ is called MF if $(C_r^*(G),\tau)$ is MF, where $\tau$ is the canonical trace.

\begin{corollary}\label{cor:mf-amalgamated-free-product-groups}
Let $G$ be an MF group with amenable subgroup $H$. Then $G*_H G$ is MF.
\end{corollary}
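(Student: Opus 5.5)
The plan is to apply Theorem \ref{thm:mf-amalgamated-free-product} with $A=C_r^*(G)$, $\rho=\tau$ the canonical trace, $B=C_r^*(H)\subseteq C_r^*(G)$, and $E\colon C_r^*(G)\to C_r^*(H)$ the canonical conditional expectation given by restricting Fourier coefficients to $H$ (so $E(\lambda_g)=\lambda_g$ if $g\in H$ and $0$ otherwise). This $E$ is trace preserving, and it induces a faithful GNS representation, since the associated $L^2$ space contains $\ell^2(G)$ as the GNS of $\tau=\tau|_B\circ E$. By hypothesis $(C_r^*(G),\tau)$ is MF.

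The remaining hypothesis of Theorem \ref{thm:mf-amalgamated-free-product} is that $B=C_r^*(H)$ is ``real nuclear''. By the way that theorem is proved, this means that $\psi=\rho|_B=\tau_H$ is real relatively nuclear in the sense of Definition \ref{def:relative-nuclearity}, i.e.\ $\mathrm{id}_B$ is a point-norm limit of maps $a\mapsto\sum_{k}\sum_{i,j}x_{k,i}\tau(x_{k,i}^*ax_{k,j})x_{k,j}^*$. This is the main step to check. I would use amenability of $H$: choose F{\o}lner sets $F\subseteq H$ and consider
$$
\psi_F(a)=\frac{1}{|F|}\sum_{g,h\in F}\lambda_g\,\tau(\lambda_g^*a\lambda_h)\,\lambda_h^*.
$$
On $a=\lambda_s$ this gives $\frac{|F\cap sF|}{|F|}\lambda_s\to\lambda_s$, and since the maps are ucp-contractive the convergence is point-norm on $C_r^*(H)$. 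This is exactly of the required form with a single $k$ and $x_i=|F|^{-1/2}\lambda_{g_i}$. Equivalently, $(B,\std)\precsim(B\otimes_\tau B,\std)$ via the vectors $|F|^{-1/2}\sum_{g\in F}\lambda_g\xi_\tau\lambda_g^*$, which are real.

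Theorem \ref{thm:mf-amalgamated-free-product} then shows that $(C_r^*(G)*_{C_r^*(H)}C_r^*(G),\tau\circ(E*_BE))$ is MF. It remains to identify this reduced amalgamated free product with $(C_r^*(G*_HG),\tau)$. This is the standard fact that the canonical map $\lambda_g\mapsto\lambda_g$ (on each copy) is an isomorphism intertwining the free product expectation with the canonical one, because in $C_r^*(G*_HG)$ the two copies of $C_r^*(G)$ are free with amalgamation over $C_r^*(H)$ with respect to the canonical expectation, and both sides carry faithful GNS representations. Hence $G*_HG$ is MF.

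I expect the main obstacle to be confirming that ``real nuclear'' in Theorem \ref{thm:mf-amalgamated-free-product} is the real relative nuclearity of $\tau|_B$ used in its proof. The F{\o}lner averaging above is the natural witness, and since its coefficients are of the symmetric form $x_{k,i},x_{k,i}^*$, it qualifies as real relatively nuclear, not merely relatively nuclear.
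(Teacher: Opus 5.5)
Your proposal is correct and follows the paper's proof: apply Theorem \ref{thm:mf-amalgamated-free-product} with $A=C_r^*(G)$, $B=C_r^*(H)$ and the canonical expectation, then identify $C_r^*(G*_HG)$ with $C_r^*(G)*_{C_r^*(H)}C_r^*(G)$. The paper cites Brown--Ozawa's Theorem 2.6.8 for (real) nuclearity of $C_r^*(H)$, whereas you verify real relative nuclearity of $\tau_H$ directly with the same F{\o}lner maps; one small slip is that writing $\psi_F$ in the displayed form requires $x_i=|F|^{-1/4}\lambda_{g_i}$, since the scalar enters four times (your vector normalization $|F|^{-1/2}$ is correct).
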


\begin{proof}
We have 
$$
C_r^*(G*_H G)\cong C_r^*(G)*_{C_r^*(H)} C_r^*(G)
$$
\cite[Chapter 4]{voiculescu1992free}. Moreover, 
 if $H\subseteq G$ is amenable, then $C_r^*(H)$ is real nuclear \cite[Theorem 2.6.8]{brown2008textrm}.
We can thus apply the previous theorem.
\end{proof}

\section{The PHP property and crossed products}\label{sec: crossed products}

In \cite{ozawa2025proximalityselflessnessgroupcalgebras} Ozawa introduced the PHP property for groups. 
We recall its definition here (stated slightly differently):

\begin{definition}\label{def:php-property}
The discrete group $G$ has the PHP property if, for every finite set
$\Lambda\subseteq G\setminus\{e\}$, every $\delta>0$, and every
$N\in\N$, there exist $n\geq N$ and triples
$(t_i,\Gamma_i,\Gamma_i^+)$, $i=1,\ldots,n$, with
$t_i\in G$ and $\Gamma_i^+\subseteq\Gamma_i\subseteq G$, satisfying the following
conditions:
\begin{enumerate}
\item[P1:]	
The sets 
$$
\Gamma_i^+, \, t_j^{-1}(G\setminus \Gamma_j),\quad i,j=1,\ldots,n
$$
are pairwise disjoint.
\item[P2:] 
With
$$
\Gamma=\bigcup_i \Gamma_i^+\cup  \bigcup_i t_i^{-1}(G\setminus \Gamma_i),
$$
we have $g\Gamma \cap \Gamma=\varnothing$, for all $g\in \Lambda$, 
$e\notin \Gamma$, and $g^{-1}\notin \Gamma$ for all $g\in \Lambda$.

\item[P3:]
For each $g\in G$, the cardinalities of the sets
$$
\{i:g\in\Gamma_i\setminus\Gamma_i^+\},
\qquad
\{i:g\in t_i^{-1}(\Gamma_i\setminus\Gamma_i^+)\}
$$
are at most $\delta\sqrt n$.
\end{enumerate}
\end{definition}

By \cite[Theorem 14]{ozawa2025proximalityselflessnessgroupcalgebras},
if $G$ has the PHP property then $C_r^*(G)$ is completely selfless, whence selfless. The class of groups that  have the PHP property is closed under direct products \cite[Section 8]{ozawa2025proximalityselflessnessgroupcalgebras} and contains all acylindrically hyperbolic groups with trivial finite radical \cite[Proposition 15]{ozawa2025proximalityselflessnessgroupcalgebras}. 

Let $G$ be a discrete group acting on a unital C*-algebra $B$: $g\mapsto \alpha_g\in \mathrm{Aut}(B)$. We recall  the construction of the reduced crossed product $B\rtimes_r G$ and of the expectation $E\colon B\rtimes_r G\to B$, and along the way introduce the notation that we will use below.

Assume that $B$ is embedded unitally  in $B(H)$, for a Hilbert space $H$. Let $\ell^2(G,H)$ be the Hilbert space of square summable functions 
$\xi\colon G\to H$. Let $B$ and $G$ act on $\ell^2(G,H)$ as follows:
\[
(b\xi)(g)=\alpha_{g^{-1}}(b)\xi(g),\qquad (u_h\xi)(g)=\xi(h^{-1}g),
\]
for $b\in B$, $g,h\in G$, and $\xi\in \ell^2(G,H)$. Then $B\rtimes_r G=C^*(B,u_g:g\in G)$.
We note that $u_gb=\alpha_g(b)u_g$, from which it follows that  finite sums of the form
\[
\sum_{g} b_gu_g
\]
form a  dense *-subalgebra of 	$B\rtimes_r G$.

Let $V_e\colon H\to \ell^2(G,H)$ be the embedding of $H$ into the summand at $g=e$, i.e., $V_e(v)=v\delta_{e}$ for $v\in H$. 
Note that $V_eb=bV_e$ for $b\in B$ (where $b$ acts on $H$ on the left-hand side and on $\ell^2(G,H)$ on the right-hand side). 
Define a u.c.p. map $E\colon B(\ell^2(G,H))\to B(H)$ by  $E(x)=V_e^*xV_e$. Then $E$ is a $B$-bimodule map and
\[
E(\sum_g b_gu_g)=b_e.
\]
So $E$ defines an expectation from $B\rtimes_r G$ onto $B$.  

Regard $\ell^\infty(G)\subset B(\ell^2(G,H))$ as multiplication operators.  Set
$$
A=B\rtimes_r G,
\qquad
C=C^*(A,\ell^\infty(G))
\subseteq B(\ell^2(G,H)).
$$
The finite linear span of the elements $bfu_g$, with
$b\in B$, $f\in\ell^\infty(G)$, and $g\in G$, is a dense
$*$-subalgebra of $C$. Since $E(bfu_g)=0$ if $g\neq e$, and $E(bf)=f(e)b$,
the  map $E$ restricts to a $B$-valued conditional expectation on $C$.

\begin{theorem}\label{thm:php-crossed-product-selfless}
If $G$ is a discrete group with the PHP property acting on $B$ by  approximately inner automorphisms,
then $(B\rtimes_{r} G,E,B)$ is selfless.
\end{theorem}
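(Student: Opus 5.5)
We will verify the finite-set criterion of Proposition \ref{prop:cp-map-selflessness-criterion-epsilon} for the cp map $\phi=E\colon A\to B\subseteq A$, where $A=B\rtimes_r G$. We take the ambient algebra to be $C=C^*(A,\ell^\infty(G))$ and $\psi=E|_C$ as the extension. By Theorem \ref{thm:prob-selfless-equivalent-cp-map}, selflessness of $E$ as a cp map is equivalent to selflessness of $(A,E,B)$. So, given a finite set $F\subseteq A$ and $\epsilon>0$, we must produce $v\in C$ satisfying three conditions: $v^*av\approx E(a)$ on $F$; $E(avv^*a^*)\approx 0$ on $F$; and $v+v^*$ close to an element of $A$.

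By density and the bimodule properties, we may take $F$ to consist of finite sums $\sum_{g\in\Lambda_0} b_gu_g$. Let $\Lambda=\Lambda_0\cup\Lambda_0^{-1}$ with $e$ removed, and apply the PHP property to $\Lambda$, a small $\delta$ and a large $N$. Following Ozawa's construction, we set
\[
v=\frac1{\sqrt n}\sum_{i=1}^n w_i\big(1_{\Gamma_i^+}+u_{t_i}^*1_{G\setminus\Gamma_i}\big),
\]
possibly with the projections conjugated or ordered differently. Here $w_i\in B$ are unitaries, to be chosen below, that handle the twisting by the action.

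In Ozawa's scalar proof, the key identity is that $1_{\Gamma_i}+u_{t_i}^*1_{G\setminus\Gamma_i}u_{t_i}$ is nearly a decomposition of the identity. This makes $v+v^*$ close to $\frac1{\sqrt n}\sum_i(u_{t_i}+u_{t_i}^*)$-type elements, which lie in $A$, up to error terms supported on $\Gamma_i\setminus\Gamma_i^+$. P3 together with Voiculescu-type norm estimates makes these errors $O(\delta)$. Condition P2 ensures that $1_\Gamma b u_g 1_\Gamma=0$ for $g\in\Lambda$, so that $v^*(bu_g)v\approx0$ for $g\ne e$. Condition P1 gives orthogonality of the pieces, so that $v^*bv\approx b$ summed correctly, which yields $E(a)$. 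The elements $e\notin\Gamma$ and $g^{-1}\notin\Gamma$ are used to show $E(avv^*a^*)\approx 0$, since $E$ only sees the $\delta_e$ coordinate.

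The new difficulty, and the main obstacle, is that $B$ does not commute with $u_{t_i}$. We have $u_{t}^*bu_t=\alpha_{t^{-1}}(b)$, so $v^*bv$ produces $\frac1n\sum_i\alpha_{t_i^{-1}}(b)$-type terms rather than $b$. Moreover, $B$ does not commute with $\ell^\infty(G)$ in this representation, since $b$ acts as $\alpha_{g^{-1}}(b)$ on the fiber at $g$. This is where approximate innerness enters. For the finite set of coefficients $b_g$ and the finitely many $t_i$, we choose unitaries $w_i\in B$ with $\alpha_{t_i}(b)\approx w_i^* b w_i$, and we replace $u_{t_i}$ by $w_iu_{t_i}$, which approximately commutes with the relevant elements of $B$. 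We will also have to check that multiplication by $1_S$ commutes with $B$ up to the twist. To handle this, we would work in the covariant picture where $C\supseteq \ell^\infty(G)$, and write $B\cdot 1_S$ terms explicitly via $E(bfu_g)=f(e)b\,\delta_{g,e}$. The vanishing conditions only require that the supports are disjoint, and this is insensitive to the $B$-coefficients. I expect that the bookkeeping of these corrections, keeping the error uniform in $n$ while using P3, is the delicate step. Once it is done, Proposition \ref{prop:cp-map-selflessness-criterion-epsilon} applies and gives selflessness of $E$.
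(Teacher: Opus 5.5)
Your framework matches the paper's: the criterion of Proposition~\ref{prop:cp-map-selflessness-criterion-epsilon} with $C=C^*(A,\ell^\infty(G))$ and $\psi=E|_C$, PHP applied to $\Lambda$, and unitaries $w_i\in B$ from approximate innerness. However, the operator you write down does not give condition (3), which is where the PHP property does its real work. Conditions (1) and (2) are comparatively routine once $v$ is an approximate isometry with range under $1_\Gamma$ and the twist by $\alpha$ is corrected.

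The problem is that in your $v$ the summand $w_i1_{\Gamma_i^+}$ carries no group unitary. The $\Gamma_i^+$ are disjoint and $w_i$ commutes with $\ell^\infty(G)$, so $\|\frac{1}{\sqrt n}\sum_i w_i1_{\Gamma_i^+}\|\le n^{-1/2}$. Hence $v$ is essentially $\frac{1}{\sqrt n}\sum_i w_iu_{t_i}^*1_{G\setminus\Gamma_i}$, and $v+v^*$ contains only the $1_{G\setminus\Gamma_i}$-part of $u_{t_i}$. The missing part, $X=\frac{1}{\sqrt n}\sum_i 1_{\Gamma_i^+}u_{t_i}w_i^*$, is not an error term. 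Indeed $X^*X=\frac1n\sum_i 1_{t_i^{-1}\Gamma_i^+}\ge 1-\frac1n-\frac{\delta}{\sqrt n}$ by P1 and P3. So $v+v^*$ differs from the natural candidate $\frac{1}{\sqrt n}\sum_i(w_iu_{t_i}^*+u_{t_i}w_i^*)\in A$ by an operator of norm close to $1$, and your proposal gives no other reason for it to be near $A$.

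The ``key identity'' you quote is also not what PHP provides. What is needed is the pairing used in the paper, following Ozawa:
\[
T=\frac{1}{\sqrt{2n}}\sum_{i=1}^n\big(1_{\Gamma_i^+}u_{t_i}w_i+w_i^*u_{t_i}^*1_{G\setminus\Gamma_i}\big),\qquad T^*T=\frac{1}{2n}\sum_{i=1}^n\big(1_{t_i^{-1}\Gamma_i^+}+1_{G\setminus\Gamma_i}\big)\approx 1 .
\]
Here the first summand of $T$ combines with the adjoint of the second:
$$1_{\Gamma_i^+}u_{t_i}w_i+1_{G\setminus\Gamma_i}u_{t_i}w_i=u_{t_i}w_i-(1_{\Gamma_i}-1_{\Gamma_i^+})u_{t_i}w_i.$$
Thus $T+T^*-s$, with $s=\frac{1}{\sqrt{2n}}\sum_i(u_{t_i}w_i+w_i^*u_{t_i}^*)\in A$, involves only the projections $1_{\Gamma_i}-1_{\Gamma_i^+}$. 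The operator Cauchy--Schwarz inequality together with P3 then gives $\|\sum_i(1_{\Gamma_i}-1_{\Gamma_i^+})u_{t_i}w_i\|\le\delta\sqrt n$; no Voiculescu inequality is needed.

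Two further corrections.

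First, the obstacle you raise about $B$ not commuting with $\ell^\infty(G)$ does not exist. Both act fiberwise on $\ell^2(G,H)$: $b$ acts by $\alpha_{g^{-1}}(b)$ on the fiber at $g$, and $f$ by the scalar $f(g)$. So they commute exactly. This exact commutation is what gives $1_\Gamma b_gu_g1_\Gamma=b_g1_\Gamma u_g1_\Gamma=0$, the orthogonality of the ranges of the summands, and $w_i1_{\Gamma_i}=1_{\Gamma_i}w_i$.

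Second, the only twist to correct is $u_t^*bu_t=\alpha_{t^{-1}}(b)$, and for this the unitary must sit next to $u_{t_i}$ with a matching convention. In your displayed formula, $w_iu_{t_i}^*$ together with $w_i^*bw_i\approx\alpha_{t_i}(b)$ contributes $\alpha_{t_i}(w_i^*bw_i)\approx\alpha_{t_i^2}(b)$ to $v^*bv$, not $b$. Your later instruction to replace $u_{t_i}$ by $w_iu_{t_i}$ repairs this, but the formula has to be changed accordingly. The paper uses $u_{t_i}w_i$ with $w_ibw_i^*\approx\alpha_{t_i^{-1}}(b)$.
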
		

The proof adapts Ozawa's PHP argument for reduced group C*-algebras
\cite[Theorem 14]{ozawa2025proximalityselflessnessgroupcalgebras} to the relative setting, with
approximate innerness used to absorb the coefficients from $B$.

\begin{proof}
Set $A=B\rtimes_r G$ and $C=C^*(A,\ell^\infty(G))$,  and regard $E$ as a $B$-valued conditional expectation on $C$.  
We will construct for each finite set $F\subseteq A$ and $\epsilon>0$, an operator $T\in C$ such that 
\begin{enumerate}
\item
$T^*aT\approx_\epsilon E(a)$ for all $a\in F$,
\item
$E(aTT^*a^*)\approx_\epsilon 0$ for all $a\in F$, 
\item 
$T+T^*\approx_{\epsilon} s$ for some $s\in A$.
\end{enumerate}
By Proposition \ref{prop:cp-map-selflessness-criterion-epsilon}, this implies that $E$ is selfless as a cp map on $A$, i.e., $(A,E,B)$ is selfless (Theorem \ref{thm:prob-selfless-equivalent-cp-map}).

Let $F\subset A$ be finite and $\epsilon>0$. To construct $T$, it suffices to choose $F$ from a set with dense linear span. So let us assume that 
$$
F=\{b_gu_g : g\in \Lambda\}\cup S,
$$
where $\Lambda\subseteq G\setminus\{e\}$, $b_g\in B$, and $1\in S\subset B$.

Let $\delta>0$ be  small (how small to be made precise below). By the PHP property of $G$ applied to the finite set $\Lambda$ and to $\delta$, there exist $n$ and triples $(t_i,\Gamma_i,\Gamma_i^+)$, $i=1,2,\ldots,n$, 
with $t_i\in G$ and $\Gamma_i^+\subseteq \Gamma_i\subseteq G$, 
satisfying P1--P3 from Definition \ref{def:php-property} and  $\|b\|/\sqrt n<\delta $ for all $b\in S$. 

From the triples $(t_i,\Gamma_i,\Gamma_i^+)$ define triples $(u_i,P_i,P_i^+)$,
where $u_i=u_{t_i}\in A$ is a unitary, and $P_i,P_i^+\in \ell^\infty(G)\subseteq B(\ell^2(G))$ are
the projections
$$
P_i=P_{\Gamma_i},\quad P_i^+=P_{\Gamma_i^+}.
$$
We regard $u_i,P_i,P_i^+$ as operators in $C$.
Note that the projections $P_i,P_i^+$ commute with $B$, since 
elements of $B$ act fiberwise on vectors in $\ell^2(G,H)$. Set
$$
P=\sum_{i=1}^n P_i^+ + \sum_{i=1}^n u_i^*(1-P_i)u_i.
$$
From the properties of the triples $(t_i,\Gamma_i,\Gamma_i^+)$ we obtain that
\begin{itemize}
\item
the $2n$ projections $P_i^+,u_i^*(1-P_i)u_i$, $i=1,\ldots,n$,
are pairwise orthogonal (whence, $P$ is a projection);

\item
$P u_g P=0$ for all $g\in \Lambda$,  $PV_e=0$, and $Pu_{g^{-1}}V_e=0$ for all  $g\in \Lambda$.

\item
$$
\Big\|\sum_{i=1}^n (P_i-P_i^+)\Big\|\le \delta \sqrt n,\qquad
\Big\|\sum_{i=1}^n u_i^*(P_i-P_i^+)u_i\Big\|\le \delta\sqrt n.
$$
\end{itemize}

Since $\alpha_{t_i^{-1}}$ is approximately inner, 
we may choose for each $i=1,\ldots,n$ a unitary  $w_i\in U(B)$ such that
\[
\|w_i b w_i^*-\alpha_{t_i^{-1}}(b)\|<\delta,
\]
for all $b\in S$.

Define
\[
T =\frac1{\sqrt{2n}}\sum_{i=1}^n (P_i^+u_i w_i + w_i^*u_i^*(1-P_i))\in C.
\]
The $2n$ summands in the definition of $T$ have pairwise orthogonal
ranges: the range of $P_i^+u_iw_i$ is contained in $P_i^+$, while
the range of $w_i^*u_i^*(1-P_i)$ is contained in
$u_i^*(1-P_i)u_i$. Hence the cross terms in $T^*T$ vanish, and
\[
T^*T = \frac1{2n}\sum_{i=1}^n \Big(w_i^*u_i^*P_i^+u_iw_i + (1-P_i)u_iw_iw_i^*u_i^*(1-P_i)
\Big) \leq 1.
\]
In particular, $\|T\|\leq1$.

Let us show that  $T$ satisfies the three properties (1)-(3) listed at the start of the proof for a sufficiently small $\delta$.

\emph{Proof of (1)}. 
Let $x=b_g u_g$, with $g\in \Lambda$. As remarked above,   $P_i$ and $P_i^+$ commute with $b_g\in B$ for all $i$.
Thus
\[
PxP=Pb_gu_gP=b_gPu_gP=0.
\]
The range of $P_i^+u_iw_i$ is contained in the range of $P_i^+$, whereas the range of $w_i^*u_i^*(1-P_i)$ is contained in the range of $u_i^*(1-P_i)u_i$. We deduce from this that  $T=PT$. Hence,
\[
T^*xT=T^*P x P T=0.
\]

 Now let $b\in S$. A direct computation shows that
\[
T^*bT = \frac1{2n}\sum_{i=1}^n
\Big(w_i^*\alpha_{t_i^{-1}}(b)w_i\,u_i^*P_i^+u_i
+\alpha_{t_i}(w_i b w_i^*)(1-P_i)\Big).
\]
By our choice of the unitaries $w_i$, 
$
\|w_i^*\alpha_{t_i^{-1}}(b)w_i - b\|<\delta$ and  $\|\alpha_{t_i}(w_i b w_i^*) - b\|<\delta$.
Applying these estimates on the right hand side of the expression for $T^*bT$, we get
\[
\Big\|T^*bT -\frac1{2n}\sum_{i=1}^n
(bu_i^*P_i^+u_i+b(1-P_i))\Big\|<\delta.
\]
Thus
\begin{align*}
\|T^*bT-b\| & \leq \delta+\frac{\|b\|}{2n}\Big\|\sum_i (u_i^*(1-P_i^+)u_i + P_i)\Big\|\\
&\leq \delta+\frac{\|b\|}{2n}\|P\| + \frac{\|b\|}{2n}\Big\|\sum_i u_i^*(P_i-P_i^+)u_i\Big\|
+\frac{\|b\|}{2n}\Big\|\sum_i (P_i-P_i^+)\Big\|\\
&\leq 3\delta.
\end{align*}
Choosing $\delta$ sufficiently small and $n$ sufficiently large gives (1).

\emph{Proof of (2)}. Since $PV_e=0$, we have $E(P)=V_e^* P V_e =0$. Let $b\in B$. Since $T$ is a contraction
and $PT=T$,  we have  $TT^*\le P$. Hence,
\[
0\le E(bTT^*b^*)\le bE(P)b^*=0.
\]
Now let $x=b_g u_g$, with $g\in \Lambda$. 
Since $TT^*\le P$, 
\[
xTT^*x^*\leq b_g u_gPu_{g^{-1}}b_g^*.
\]
Applying $E$ we get
\[
0\le E(xTT^*x^*)\le b_gE(u_gPu_{g^{-1}})b_g^*.
\]
Since $Pu_{g^{-1}}V_e=0$, 
\[
E(u_gPu_{g^{-1}})=V_e^* u_g P u_{g^{-1}} V_e=0.
\]
Thus, $E(xTT^*x^*)=0$. This proves (2).

\emph{Proof of (3)}. Set
\[
s=\frac1{\sqrt{2n}}
\sum_{i=1}^n (u_i w_i+w_i^*u_i^*)\in B\rtimes_r G.
\]
Since $w_i$ commutes with $P_i$ and $P_i^+$,
\[
T+T^*-s
=-\frac1{\sqrt{2n}}\sum_{i=1}^n
((P_i-P_i^+)u_i w_i+w_i^*u_i^*(P_i-P_i^+)).
\]
By the operator Cauchy-Schwarz inequality,
\[
\Big\|\sum_{i=1}^n (P_i-P_i^+)u_i w_i\Big\|
\leq
\Big\|\sum_{i=1}^n (P_i-P_i^+)\Big\|^{\frac12}
\Big\|\sum_{i=1}^n u_i^*(P_i-P_i^+)u_i\Big\|^{\frac12}
\leq \delta\sqrt n.
\]
Hence
\[
\|T+T^*-s\|\leq \sqrt 2\delta.
\]
Thus, choosing $\sqrt 2\delta <\epsilon$  proves (3).
\end{proof}

\bibliography{references}
\bibliographystyle{plain}
\end{document}